\newtheorem{thm}{Theorem}[section]  
\newtheorem{cor}[thm]{Corollary}
\newtheorem{defin}[thm]{Definition} 
\newtheorem{lemma}[thm]{Lemma} 
\newtheorem{prop}[thm]{Proposition} 
\newtheorem{conj}[thm]{Conjecture} 
\newtheorem*{defin*}{Definition}
\newcommand{\bbb}{\mbox{$\beta$}}
\newcommand{\eee}{\mbox{$\epsilon$}} 
\newcommand{\Ggg}{\mbox{$\Gamma$}}
\newcommand{\Bb}{\mbox{$\mathcal B$}}
\newcommand{\Rrr}{\mbox{$\mathbb{R} $}}
\newcommand{\bdd}{\mbox{$\partial$}}
\def\Diff{\mathop{\rm Diff}\nolimits}
\newcommand{\fakecoprod}{\mathbin{\rotatebox[origin=c]{180}{$\Pi$}}}
\newcommand{\underln}[1]{\underline{\smash{#1}}}
\newcommand{\ppap}{T_{t, \theta}}
\newcommand{\ra}{\rightarrow}
\newcommand{\pr}{\prime}
\newcommand{\de}{\partial}
\newcommand{\te}{\theta}
\begin{document}  

\title{Powell moves and the Goeritz group}   

\author{Michael Freedman}
\address{\hskip-\parindent
        Microsoft Station Q\\ 
        University of California\\
        Santa Barbara, CA 93106-6105\\ 
        and\\
        Mathematics Department\\
        University of California\\
        Santa Barbara, CA 93106-3080 USA}
\email{michaelf@microsoft.com}

\author{Martin Scharlemann}
\address{\hskip-\parindent
        Martin Scharlemann\\
        Mathematics Department\\
        University of California\\
        Santa Barbara, CA 93106-3080 USA}
\email{mgscharl@math.ucsb.edu}

%\thanks{Research partially supported by National Science Foundation grants.}

\date{\today}

\begin{abstract}  In 1980 J. Powell \cite{Po} proposed that five specific elements sufficed to generate the Goeritz group of any Heegaard splitting of $S^3$, extending work of Goeritz \cite{Go} on genus $2$ splittings.  Here we prove that Powell's conjecture was correct for splittings of genus $3$ as well, and discuss a framework for deciding the truth of the conjecture for higher genus splittings.
\end{abstract}

\maketitle

Following early work of Goeritz \cite{Go}, the {\em genus $g$ Goeritz group} of the $3$-sphere can be described as the isotopy classes of orientation-preserving homeomorphisms of the $3$-sphere that leave the standard genus $g$ Heegaard surface $T_g$ invariant.  Goeritz identified a finite set of generators
for the genus $2$ Goeritz group; that work has been recently updated, extended and completed, to give a full picture of the group (see \cite{Sc}, \cite{Ak}, \cite{Cho}).  
In 1980 J. Powell \cite{Po} extended Goeritz' set of generators to a set of five elements that he believed would generate the Goeritz group for any fixed higher-genus splitting.  
Unfortunately, his proof that these suffice contained a serious gap; here we prove that they do suffice for genus $3$ splittings, using largely techniques that were unknown in 1980, and introduce methods that could be helpful in deciding the conjecture for higher genus splittings.

Powell's actual viewpoint on the Goeritz group, which we will adopt, is framed somewhat differently.  Following Johnson-McCullough \cite{JM} (who extend the notion to arbitrary compact orientable manifolds) consider the space of left cosets $\Diff(S^3)/\Diff(S^3, T_g)$, where $\Diff(S^3, T_g)$ consists of those orientation-preserving diffeomorphisms of $S^3$ that carry $T_g$ to itself.  The fundamental group $P_g$ of this space projects to the genus $g$ Goeritz group (with kernel $\pi_1(SO(3)) = \mathbb{Z}_2$ \cite[p.197]{Po}) as follows:  A non-trivial element is represented by an isotopy of $T_g$ in $S^3$ that begins with the identity and ends with a diffeomorphism of $S^3$ that takes $T_g$ to itself; this diffeomorphism of the pair $(S^3, T_g)$ represents an element of the Goeritz group as defined earlier.  The advantage of this point of view\footnote{ The $Z_2$ distinction (unimportant for our purposes) 
is best illustrated by Powell's generator $D_\eta$ in Figure \ref{fig:PowellPic}.  It has order $g$ when viewed as a diffeomorphism $(S^3, T_g) \to (S^3, T_g)$ but is of order $2g$ when viewed as an isotopy of $S^3$ to itself.} is that an element of the Goeritz group can be viewed quite vividly: it is a sort of excursion of $T_g$ in $S^3$ that begins and ends with the standard picture of $T_g \subset S^3$.  These excursions are what are pictured by Powell in Figure \ref{fig:PowellPic}.

\begin{figure}[ht!]
    \centering
    \includegraphics[scale=1]{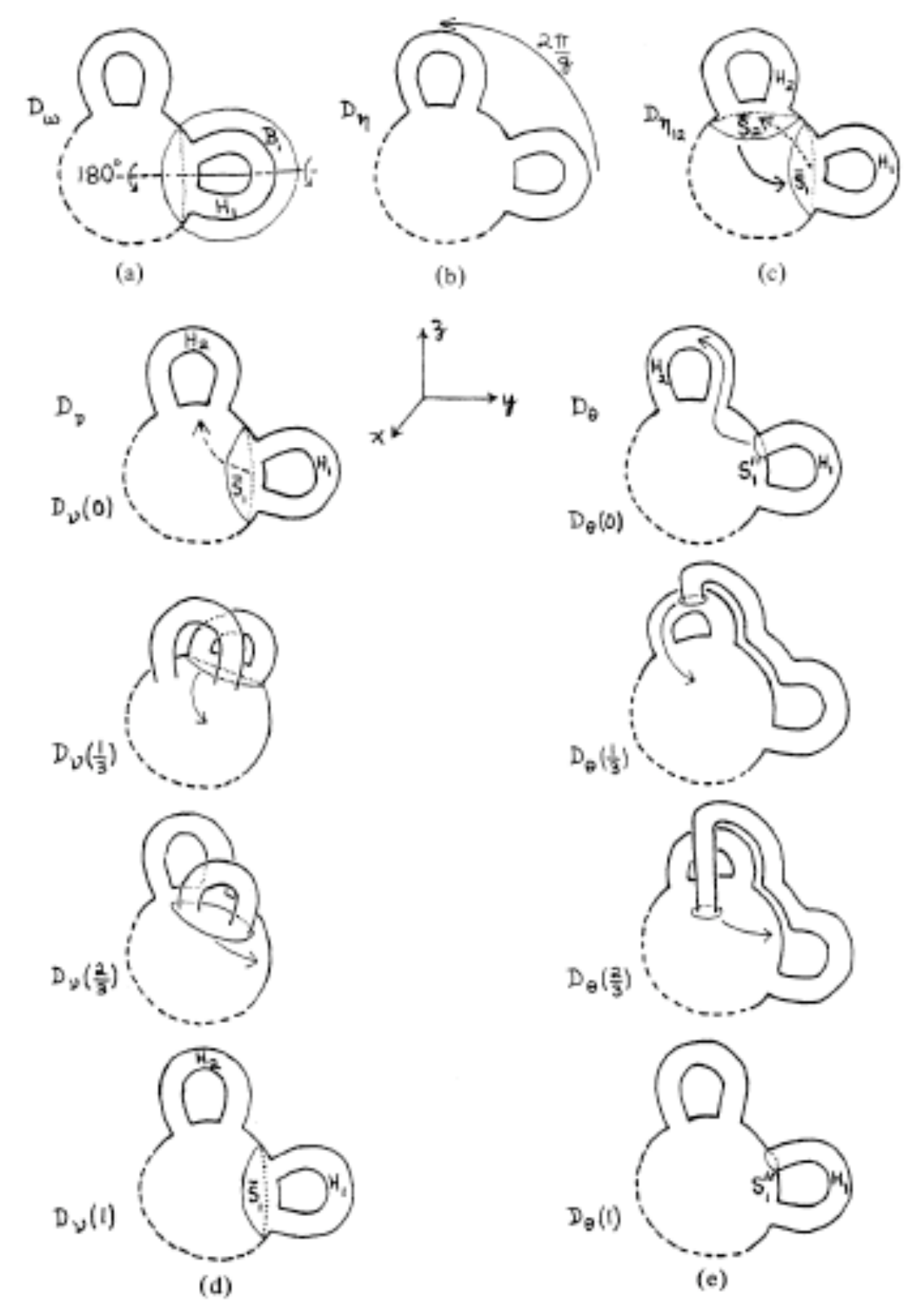}
    \caption{Powell's proposed generators (from \cite{Po})}
    \label{fig:PowellPic}
    \end{figure}

The study of motion groups of three dimensional objects has a deep connection to the emerging study of $3 + 1 - D$ topological quantum field theory (TQFT) and also to four dimensional questions such as the smooth Schoenflies problem. 
%Possibly the most interesting special case is the motion group $P_g$ of genus $g \geq 0$ Heegaard decomposition of the 3-sphere $S^3$. $P_g = \pi_1$ (space of genus $g$ Heegaard decompositions of $S^3$) $\coloneqq$ $\pi_1(\mathcal{H}_g)$.
One may think of $P_g$ as a higher dimensional analog of the braid group $B_n$, the motion group of $n$-points in the plane. In the last 50 years $B_n$ has been closely studied with its unitary ($2 + 1 - D$ TQFT) representations becoming the main focus in the last 30 years. With increasing interest in $3 + 1 - D$ TQFTs, $P_g$ and its unitary representations are a natural target for condensed matter theorists.     

We would like to thank Alex Zupan for helpful conversations; his upcoming \cite{Zu} interleaves the Powell Conjecture with the question of the connectivity of the complexes of reducing spheres that arise from Heegaard splittings of $S^3$.

\section{Composing Powell generators} \label{sect:compose}

Let $T_g \subset S^3$ be the standard genus $g$ Heegaard surface in $S^3$, dividing $S^3$ into the genus $g$ handlebodies $A_g$ and $B_g$. 

\medskip

\begin{defin}  Any finite composition of Powell generators, illustrated in Figure \ref{fig:PowellPic}, will be called a {\em Powell move}.
\end{defin}
 
Suppose $T \subset S^3$ is a genus $g$ Heegaard surface, and $h, h': (S^3, T) \to (S^3, T_g)$ are two orientation-preserving homeomorphisms.

\begin{defin}  $h, h'$ are \em{Powell equivalent} if $h' h^{-1}: T_g \to T_g$ is isotopic in $T_g$ to a Powell move.
\end{defin}

\begin{conj}[Powell] Any $h, h'$ are Powell equivalent.
\end{conj}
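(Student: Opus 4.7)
The plan is to reformulate Powell equivalence as a generation problem. Saying $h, h'$ are Powell equivalent is the same as saying the self-homeomorphism $f = h'h^{-1}$ of $(S^3, T_g)$ is isotopic to a Powell move, so the conjecture asserts that the Powell generators generate the group $P_g$. I focus on the case $g=3$ announced in the abstract. The strategy is to pick a canonical reducing object in $(S^3, T_3)$, track how $f$ moves it, apply Powell moves to put its image in standard position, and then cut along the object and invoke induction, falling back on the known genus $2$ case of \cite{Sc}, \cite{Ak}, \cite{Cho}.

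Concretely, let $\mathcal{R}$ be the graph whose vertices are isotopy classes of reducing spheres for $T_3$ (spheres meeting $T_3$ in a single essential simple closed curve that is separating on $T_3$ and bounds disks on both sides) and whose edges join disjoint reducing spheres. I would attempt the following connectivity lemma: any two vertices of $\mathcal{R}$ are connected by a path whose edges are realized by Powell moves, i.e. the Powell subgroup acts transitively on $\mathcal{R}$. Granting this, pick once and for all a standard $R_0 \in \mathcal{R}$; given $f \in P_3$, find a Powell move $P$ with $Pf(R_0) = R_0$. Replacing $f$ by $Pf$, we may assume $f$ preserves $R_0$ setwise, so $f$ descends to homeomorphisms of the two summand Heegaard splittings of $S^3$ obtained by cutting along $R_0$ and capping off, of genera $1$ and $2$. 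The genus $2$ factor is Powell-generated by the results of \cite{Sc}, \cite{Ak}, \cite{Cho}, and the genus $1$ factor is classical; a final verification that these summand Powell moves can be spliced back across $R_0$ into a Powell move on $T_3$ completes the argument.

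The main obstacle is the connectivity lemma. Given $R, R' \in \mathcal{R}$, put them in general position and attempt to reduce $|R \cap R'|$. Innermost disk arguments applied to $R$ inside the handlebodies $A_3$ and $B_3$ eliminate intersections bounding trivial subdisks, but in general one is left with essential intersection patterns that no ambient isotopy alone can simplify. The hard part is to identify Powell moves that produce an effective reduction of the combinatorial complexity of $R \cap R'$ (measured by intersection number together with a suitable auxiliary invariant). I expect the proof to use a sweepout or Cerf-theoretic device, considering a generic one-parameter family of reducing spheres interpolating between $R$ and $R'$ and associating a Powell generator from Figure \ref{fig:PowellPic} to each elementary event along the family. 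Zupan's forthcoming work \cite{Zu} on the connectivity of reducing-sphere complexes is presumably the key external tool; the restriction to genus $3$ likely reflects the fact that the combinatorial complexity of intersection patterns of reducing spheres grows rapidly with $g$, so that the five Powell moves suffice to handle every elementary case only through genus $3$.
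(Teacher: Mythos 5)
Your high-level plan — interpolate between the identity and $f$ by a one-parameter family, read off a reducing object at each stage, and track how it changes through elementary events to produce a sequence of Powell moves — is in fact the underlying philosophy of the paper (Section~\ref{sub:philosophy}), and your instinct to then cut along a reducing sphere and induct on genus is also correct. But several of the load-bearing steps in your outline are either different from what the paper does, left unproven, or hiding real difficulties.

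First, the paper does not track reducing \emph{spheres}; it tracks ordered pairs $(a,b)$ of disjoint curves with $a$ compressing in $A$ and $b$ in $B$ — that is, \emph{weakly reducing pairs}, organized into the complex $2C(T)$. This is not a cosmetic difference. The Rubinstein--Scharlemann/Rieck sweepout naturally produces weak reductions at saddle-wall crossings, not reducing spheres, and the whole transition analysis of Section~\ref{sub:annular} (dangerous diagonals, the lantern trick, Lemma~\ref{lemma:2of3}) happens at the level of single disk pairs. If you insist on reducing spheres throughout, you must repeatedly upgrade a weak reduction to a full reducing sphere via Casson--Gordon, and the paper's Cautionary Note at the end of Section~\ref{sect:compose} and the discussion in Section~6 explain exactly why the choices involved in that upgrade are the crux of the difficulty — it is not clear different upgrades give Powell-equivalent trivializations, even under strong inductive hypotheses.

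Second, your ``connectivity lemma'' (the Powell subgroup acts transitively on the reducing-sphere graph, with edges realized by Powell moves) is stated as the key step but never argued; in fact it is essentially equivalent to the Powell Conjecture itself, so assuming it is circular. You also attribute the missing ingredient to Zupan's forthcoming work, but the paper does \emph{not} use \cite{Zu} as a tool — Zupan is thanked for conversations, and his work is described as interleaving the two questions, not resolving either.

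Third, your plan says nothing about what is actually genus-$3$-specific. The paper's Section~\ref{sect:genus3} is where the restriction to $g=3$ pays off: Lemma~\ref{lemma:step1} (at least one of $a$, $b$, or a surrogate is primitive) and Lemma~\ref{lemma:step2} (classification of disjoint triples $a, b_1, b_2$) are elementary but genuinely low-genus facts, and together with Corollary~\ref{cor:orthog2} they give the topological, edge-invariant assignment of Powell classes demanded by Conjecture~\ref{conj:cwr}. Without an analogue of these lemmas your induction has no base step at the $2C(T)$ level, and your ``final verification that the summand Powell moves can be spliced back'' is precisely the part that requires the machinery of Sections~\ref{sect:compose}--\ref{sect:weak} (Theorem~\ref{thm:cindep} and the theorem following it) to control the choice of separating curve $c$.

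In short: right philosophy, but you are working with the wrong complex (reducing spheres instead of weakly reducing pairs), you have replaced the hard content with an unproven transitivity claim that is as strong as the theorem, you have misidentified Zupan as an input, and you are missing entirely the genus-$3$-specific combinatorics of primitive and surrogate disks that make the base case go through.
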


The genus two case $g = 2$ is Goeritz's Theorem.
%, see \cite{Go}, \cite{Sc1}, \cite{Ak}, \cite{Cho}. 
\bigskip

Powell's description of $T_g \subset S^3$ begins with a round $2$-sphere in $S^3$, to which is then connect-summed at each of $g$ points a standard unknotted torus.  These summands will be called the standard genus $1$ summands; their complement is a $g$-punctured sphere.  The two generators $D_{\eta}$ and $D_{\eta_{12}}$ show that any braid move of the standard genus $1$ summands around the complementary $g$-punctured sphere is a Powell move.   It follows that there will be no need to keep track of the labels of the standard summands.  Furthermore, $D_{\omega}$ allows us not to worry about the orientation of the longitude of the $1$-handle within each summand.  
\medskip

{\bf Example:}  Suppose $S^3 = A \cup_T \cup B$ is a genus $g$ Heegaard in $S^3$ and $\{a_1, ..., a_g\}$ and $\{b_1, ..., b_g\}$ are orthogonal sets of disks in $A$ and $B$ respectively (orthogonal means $|a_i \cap b_j| = \delta_{i,j})$.  There is an obvious orientation preserving homeomorphism of $S^3$ that carries $T$ to $T_g$  and each pair $\bdd a_i, \bdd b_i$ to the meridian and longitude respectively of one of the standard summands of $T_g$.  It follows from the comments above that any two such homeomorphisms are Powell equivalent.

More broadly, but in a similar spirit we have:
 \begin{lemma}   \label{lemma:braid2} 
 Any braid move of a collection of standard genus $1$ summands over their complementary surface is a Powell move.
  \end{lemma}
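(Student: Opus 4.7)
The plan is to identify the braid-move group with the sphere braid group $B_g(S^2)$ and then reduce the lemma to a standard generation statement. A braid move of the $g$ standard summands over the complementary $g$-punctured sphere is, by definition, the isotopy class of an ambient motion of $T_g$ in $S^3$ that drags the summands around the complementary surface. Tracking the basepoints at which the summands are attached yields a homomorphism $\Phi$ from the braid-move group to $B_g(S^2)$, and $\Phi$ is surjective: any loop in the configuration space of $g$ points of $S^2$ lifts, through a small tubular neighborhood of the summands, to an ambient isotopy of $T_g$ of the required kind.

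Next, I would read off from Figure~\ref{fig:PowellPic} that the generator $D_{\eta_{12}}$ realizes the half-twist $\sigma_1$ of the basepoints of the first two summands, and that $D_\eta$ realizes the cyclic rotation $c$ that sends summand $i$ to summand $i+1 \pmod{g}$. A classical presentation of $B_g(S^2)$ uses the half-twist generators $\sigma_1,\ldots,\sigma_{g-1}$ of adjacent strands; and since
\[
\sigma_i \;=\; c^{\,i-1}\, \sigma_1\, c^{-(i-1)},
\]
the pair $\{c,\sigma_1\}$ already generates the group. Thus every braid in $B_g(S^2)$, and therefore every braid move, can be written as a word in $D_\eta^{\pm 1}$ and $D_{\eta_{12}}^{\pm 1}$; any residual internal twisting of a summand picked up in the lift is undone by the Powell generator $D_\omega$, as noted in the paragraph preceding the Example. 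This exhibits every braid move as a composition of Powell generators, i.e.\ a Powell move.

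The main obstacle is not the algebra, which is routine, but the geometric identification: one has to verify that the pictures of $D_\eta$ and $D_{\eta_{12}}$ in Figure~\ref{fig:PowellPic} genuinely carry out the cyclic rotation and the adjacent half-twist of basepoints, and introduce no extraneous Dehn-twisting on the complementary punctured sphere that would push the resulting mapping class outside the image of $\Phi$. Once this visual check is in place, the conjugation identity above closes the argument with no further work.
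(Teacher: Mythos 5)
Your argument only re-derives the statement made in the paragraph \emph{preceding} the lemma: that braid moves of the summands around the complementary $g$-punctured \emph{sphere} are Powell moves, generated by $D_\eta$ and $D_{\eta_{12}}$.  That is not what the lemma claims.  The lemma is about braid moves of a (possibly proper) subcollection $\mathcal{S}$ of the summands over their complementary \emph{surface}, which is not a punctured sphere: if $|\mathcal{S}| = k < g$, the complementary surface is a genus $g-k$ surface with $k$ boundary circles, because it includes the handles of the remaining $g-k$ summands.  Your identification of the braid-move group with $B_g(S^2)$ therefore models the wrong configuration space and misses the essential new content.

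Concretely, the step your proof cannot reach is carrying a summand $H_1 \in \mathcal{S}$ once around the \emph{handle} of a complementary summand $H_2 \notin \mathcal{S}$ --- that is, around the longitude or the meridian of the punctured torus $T \cap H_2$.  Neither of these loops lies in the configuration space of $g$ points in $S^2$, so no word in $D_\eta$, $D_{\eta_{12}}$, $D_\omega$ alone produces them.  The paper's proof supplies exactly these missing moves: after using the sphere braid moves (which is the reduction you effectively carried out) to isolate the problem to one $H_1$ and one $H_2$, it realizes the circuit of $H_1$ around the longitude of $H_2$ as the composition $D_\theta D_\omega D_\theta D_\omega$, and the circuit around the meridian as $D_\nu$, and then invokes the fact that any motion of a point in a punctured torus is generated by meridian and longitude circuits.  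Your proposal never uses $D_\theta$ or $D_\nu$, and without them the lemma cannot be proved.
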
  
   Figure \ref{fig:braid1} shows an example in the case of two summands.
   
   \begin{figure}[ht!]
    \centering
    \includegraphics[scale=0.75]{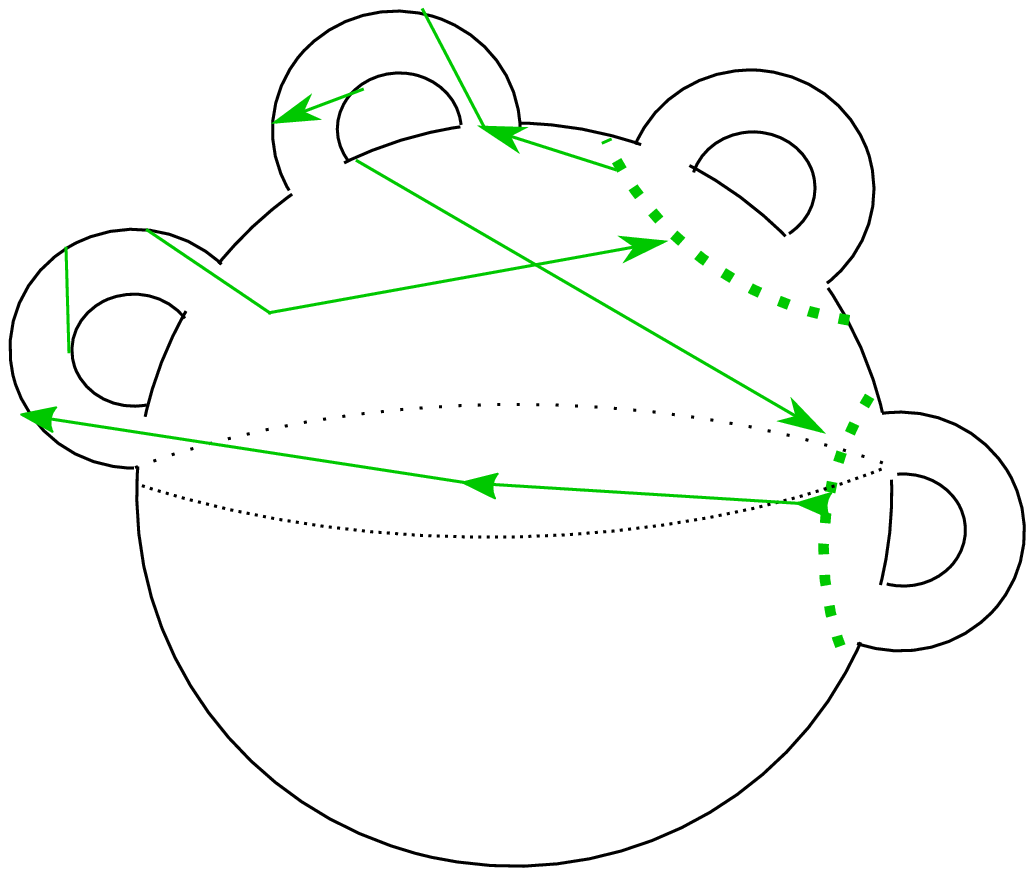}
    \label{fig:braid1}
    \end{figure}

 \begin{proof}  Let $\mathcal{S}$ be the collection of standard genus $1$ summands which we are isotoping.  From the brief discussion above it suffices to assume that the first Powell standard solid handle $H_1$ is in  $\mathcal{S}$ and $H_2$ is not, and then to show that any braid move of $H_1$ around the punctured torus $T \cap H_2$  is a Powell move.   The composition $D_{\theta}D_{\omega}D_{\theta}D_{\omega}$ isotopes $H_1$ around the longitude of $H_2$.  The generator $D_{\nu}$ isotopes $H_1$ around the meridian of $H_2$.  But any braid move of a point in a punctured torus is a composition of such circuits around the meridian and around the longitude.  
 \end{proof}

Let $\{c_1, ..., c_{g-1}\}$ be the disjoint separating circles on $T_g$ shown in Figure \ref{fig:circlesinT}, with each $c_i$ separating the first $i$ standard summands from the last $g-i$ standard summands.  Note that each $c_i$ bounds a disk in both $A$ and $B$ and so defines a reducing sphere $S_i$ for $T_g$.  

 \begin{figure}[ht!]
\labellist
\small\hair 2pt
\pinlabel  $c_1$ at 110 140
\pinlabel  $c_2$ at 167 133
\pinlabel  $c_{g-1}$ at 240 135
\endlabellist
    \centering
    \includegraphics[scale=0.75]{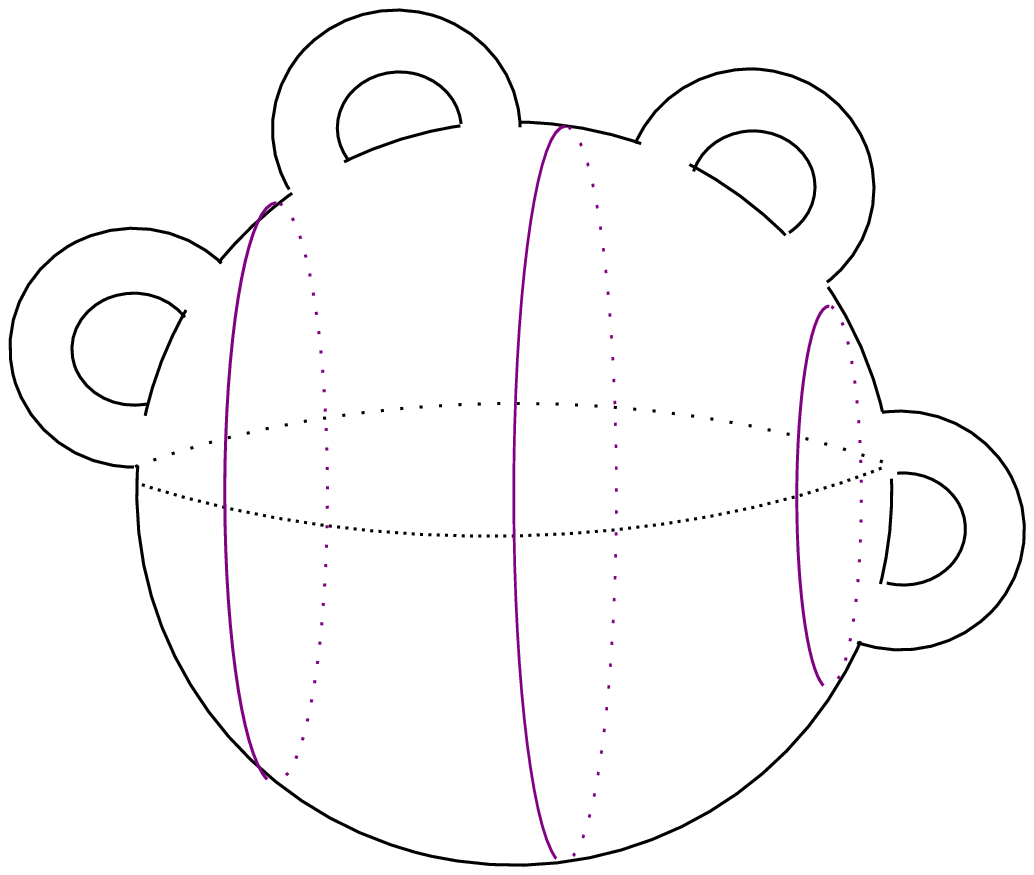}
     \caption{} \label{fig:circlesinT}
    \end{figure}

{\bf Extension 1:}  Suppose $S^3 = A \cup_T \cup B$ is a genus $g$ Heegaard surface in $S^3$ as above.  Let $g_1 + g_2 = g$.  Suppose  $\{a_1, ..., a_{g_1}\}$ and $\{b_1, ..., b_{g_1}\}$ are orthogonal sets of disks in $A$ and $B$ respectively.  
There are (many) orientation preserving homeomormorphisms of $S^3$ that take $T$ to $T_g$,  and each $\bdd a_i, \bdd b_i$ to the meridian and longitude respectively of one of the first $g_1$ standard summands of $T_g$. 

\begin{lemma} \label{lemma:orthog0} 
 {\em If the Powell Conjecture is true for genus $g_2$ splittings of $S^3$} then all such homeomorphisms $T\to T_g$ are Powell equivalent.

\end{lemma}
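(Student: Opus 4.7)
Write $f = h' h^{-1}: (S^3, T_g) \to (S^3, T_g)$; the goal is to show $f$ is Powell equivalent to the identity. The plan is to use Powell moves to reduce $f$ to the identity on the first $g_1$ standard summands of $T_g$, and then to apply the Powell Conjecture at genus $g_2$ to the complementary picture.

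Since $h$ and $h'$ both carry the orthogonal pair $(a_i, b_i)$ to the meridian/longitude pair of one of the first $g_1$ standard summands of $T_g$, the map $f$ permutes these summands. By Lemma \ref{lemma:braid2} we may compose with a braid move so that $f$ preserves each of them individually, and then, using $D_\omega$ and the other local Powell generators, further arrange that $f$ carries the meridian and longitude of each to themselves with correct orientation. For each such $i$, the $i$-th standard summand is contained in the ball $B_i$ bounded by its reducing sphere $R_i$; capping $B_i$ off along $R_i$ yields a standard genus $1$ Heegaard splitting of $S^3$ to which the Example above (applied at genus $1$) applies. The restrictions of $h, h'$ to this piece are therefore Powell equivalent via generators supported in $B_i$, and these generators extend by the identity on $S^3 \setminus B_i$ to Powell moves on $(S^3, T_g)$. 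After these reductions we may assume $f$ is the identity on a neighborhood of each of the first $g_1$ standard summands.

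With $f$ so normalized, replace each $B_i$ by a fresh $3$-ball and cap each boundary circle of $\Sigma := T_g \setminus \bigcup_i \inter(B_i)$ by a disk, producing a standard genus $g_2$ Heegaard surface $\hat{\Sigma}$ in an $S^3$. Extending $f$ by the identity on the fillings yields a self-homeomorphism $\hat{f}$ of $(S^3, \hat{\Sigma})$, which by the hypothesized genus $g_2$ Powell Conjecture is isotopic in $\hat{\Sigma}$ to a genus $g_2$ Powell move. The delicate remaining step is to lift this conclusion from $\hat{\Sigma}$ back to $T_g$: each genus $g_2$ Powell generator is either supported locally in one or two summands (so it transports verbatim to a Powell generator for genus $g$) or is a braid move of the last $g_2$ summands, which is itself a Powell move for $T_g$ by Lemma \ref{lemma:braid2}; similarly, the connecting isotopy in $\hat{\Sigma}$ lifts to $T_g$, at worst carrying the first $g_1$ summands along in a braid-like motion which is again a Powell move by Lemma \ref{lemma:braid2}. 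Hence $f$ is Powell equivalent to the identity, so $h$ and $h'$ are Powell equivalent.
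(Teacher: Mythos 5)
Your proof is correct in spirit and reaches the same conclusion, but it organizes the reduction differently from the paper's argument. The paper works with the separating curve: setting $c = h'h^{-1}(c_{g_1})$, it uses Lemma \ref{lemma:braid2} to move $c$ back to $c_{g_1}$ while carrying the first $g_1$ summands to themselves, so that after a Powell move the homeomorphism $f = h'h^{-1}$ preserves the reducing sphere $S_{g_1}$; the Example is then applied to the genus~$g_1$ side of $S_{g_1}$ and the genus~$g_2$ Powell Conjecture to the other side. You instead front-load the use of the Example (and the braid and $D_\omega$ moves) to normalize $f$ to be the identity on tubular neighborhoods $B_i$ of each of the first $g_1$ summands, and then cap these off so that only a genus~$g_2$ splitting remains, to which the hypothesis applies directly. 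This is a cleaner packaging in the sense that after the normalization there is a single remaining piece, but it shifts the delicate work to the ``lifting'' step at the end: a genus $g_2$ Powell move and its connecting isotopy in $\hat{\Sigma}$ must be transported back to $T_g$, dragging the $g_1$ capped disks (and hence the original summands) along, and Lemma \ref{lemma:braid2} must be invoked to absorb the resulting braid motion. This is essentially the same use the paper makes of Lemma \ref{lemma:braid2}, just applied at the end of the argument rather than at the beginning.

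Two points deserve more care than your sketch provides. First, the dichotomy ``each genus~$g_2$ Powell generator is either supported locally in one or two summands, or is a braid move'' is not literally accurate (e.g.\ $D_\nu$ slides a bubble around a meridian, and $D_\theta$ engages both summands and the arc between them); what one really needs is that each generator can be isotoped to have support disjoint from the $g_1$ capping disks, at the cost of a further braid move, which is again a Powell move by Lemma \ref{lemma:braid2}. Second, in asserting that $f$ can be made literally the identity on a neighborhood of each summand, one must dispose of the potential Dehn twist around the bounding curve $c_i$ of the $i$-th summand: the Example shows that $f$ restricted to the capped torus $\hat{T}_i$ is isotopic (in $\hat{T}_i$) to a Powell move, but that isotopy can trace a Dehn twist around $c_i$ when fixed rel the cap. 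Fortunately $T_{c_i}$ is itself a Powell move (it is $D_\omega^2$ for the $i$-th summand, the sphere twist along $S_i$), so this does not break the argument, but it should be said explicitly. With those clarifications your route is sound and is a reasonable reorganization of the paper's.
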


\begin{proof}  Let $h, h': (S^3, T) \to (S^3, T_g)$ be two such homeomorphisms.  Let $c = h'h^{-1}(c_{g_1}) \subset T_g$.  Then $c$, like $c_{g_1}$, separates $T_g$ into a genus $g_1$ surface containing the first $g_1$ handles and a genus $g_2$ surface disjoint from them. Also $c$, like $c_{g_1}$, bounds disks in both $A$ and $B$.
\medskip

If we cut off the first $g_1$ handles from $T_g$ then $c$ and $c_g$ both bound disks, and so can be isotoped to coincide, in the resulting genus $g_2$ surface. It follows then from Lemma \ref{lemma:braid2} that a Powell move will carry the first $g_1$ $1$-handles to themselves {\em and} take $c$ to $c_{g_1}$.  It follows that we may assume that $h'h^{-1}(c_{g_1})$ takes $c_{g_1}$ (and so the reducing sphere in which it lies) to itself.   The lemma then follows by operating separately on the genus $g_1$ and $g_2$ Heegaard splittings determined by $S_{g_1}$.
\end{proof}

\begin{defin} \label{defin:primitive}  Let $M = A \cup B$ be a Heegaard splitting of a compact manifold $M$.  A collection $\{a_1, ..., a_m\}$ of disjoint disks in $A$ is {\em primitive} if there is a collection of disjoint disks $\{b_1, ..., b_m\}$ in $B$ so that for every $i, j$, $|a_i \cap b_j| = \delta_{ij}$.

\end{defin}

{\bf Extension 2:} For $S^3 = A \cup_T \cup B$ as above, suppose $\{a_1, ..., a_g\}$ is a collection of $g$ primitive disks in $A$.  One can define a homeomorphism from $T$ to $T_g$  that takes each $\bdd a_i$ to the meridian of one of the standard summands of $T_g$.  

\begin{lemma} \label{lemma:orthog1}  
All such homeomorphisms are Powell equivalent.
\end{lemma}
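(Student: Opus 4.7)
The plan is as follows. Given two homeomorphisms $h, h' : (S^3, T) \to (S^3, T_g)$ satisfying the hypothesis, use primitivity to pick disks $\{b_1,\dots,b_g\} \subset B$ dual to $\{a_1,\dots,a_g\}$. Write $m_i$ and $\ell_i$ for the meridian and longitude disks of the $i$-th standard genus-one summand of $T_g$. Using Lemma~\ref{lemma:braid2} to permute the standard summands and isotoping inside $A$ and $A_g$ to align meridian disks, we may assume $h(a_i) = h'(a_i) = m_i$ for every $i$. Set $\tilde b_i := h^{-1}(\ell_i)$ and $\tilde b'_i := h'^{-1}(\ell_i)$; both are disk systems in $B$ dual to $\{a_i\}$, and by construction $h$ carries the orthogonal pair $(a_i, \tilde b_i)$ to $(m_i, \ell_i)$ while $h'$ carries $(a_i, \tilde b'_i)$ to $(m_i, \ell_i)$.

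Now invoke Lemma~\ref{lemma:orthog0} with $g_1 = g$ (and $g_2 = 0$, so its hypothesis on the genus-zero Powell conjecture is vacuous). It follows that $h$ is Powell equivalent to any homeomorphism taking $(a_i, \tilde b_i)$ to $(m_i, \ell_i)$, and similarly for $h'$ with $(a_i, \tilde b'_i)$. The lemma thus reduces to the following key claim: \emph{if $\{\tilde b_i\}$ and $\{\tilde b'_i\}$ are both dual to the same primitive $\{a_i\}$ in $B$, then the corresponding standard-type homeomorphisms are Powell equivalent.}

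To prove the key claim, consider the $2g$-punctured sphere $P := T \setminus \bigsqcup_i \bdd a_i$, whose punctures come in $g$ paired families indexed by the $a_i$. The intersections $\bdd \tilde b_i \cap P$ and $\bdd \tilde b'_i \cap P$ form two disjoint-arc systems on $P$, each consisting of $g$ arcs connecting pairs of punctures. After placing them in minimal position, I would induct on $\sum_i |\bdd \tilde b_i \cap \bdd \tilde b'_i|$. In the base case of no intersections, an isotopy within $P$ pairs them up, composed if needed with the orientation-reversing generator $D_\omega$ to adjust within a pair, and the Example preceding Lemma~\ref{lemma:braid2} finishes the argument. For the inductive step, locate an outermost sub-disk of $P$ cut off by an arc of $\bdd \tilde b'_i$, and apply the braid move of Lemma~\ref{lemma:braid2} that pushes the $i$-th summand around the complementary surface to slide $\bdd \tilde b'_i$ across this sub-disk, strictly reducing the intersection count. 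The main obstacle is verifying that every outermost configuration can indeed be removed by such a Powell move: equivalently, that the subgroup of the mapping class group of $P$ fixing each puncture pair and generated by the summand-braid moves of Lemma~\ref{lemma:braid2} together with within-summand flips acts transitively on dual arc systems. This is the technical heart of the argument, and is where primitivity is essential, since duality is precisely what guarantees that each arc modification comes from an ambient move in $S^3$ rather than a bare surface isotopy.
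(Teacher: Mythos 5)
Your overall strategy — reduce to showing that two dual systems $\{\tilde b_i\}$, $\{\tilde b'_i\}$ for the same primitive $\{a_i\}$ give Powell-equivalent homeomorphisms, then work in the $2g$-punctured planar surface $P$ and induct on arc intersections — matches the paper's plan, but two steps have genuine gaps.

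First, your base case is wrong as stated. You claim that when $\{\partial\tilde b_i\}$ and $\{\partial\tilde b'_i\}$ are disjoint, ``an isotopy within $P$ pairs them up, composed if needed with $D_\omega$.'' That is false: two disjoint arcs in $P$ joining the same pair of boundary circles need not be isotopic rel endpoints, because the annulus (or pair of pants, etc.) between them may contain other boundary components of $P$. This is exactly the content the paper's Special Case has to work for: it sets $c_i = \beta_i\cup\beta'_i$, runs an induction on which $c_i$ bound disks in $P$, and uses $D_\nu$ to carry the intervening standard summands once around $a_\ell$ so as to carry $b_\ell$ to $b'_\ell$. Your proposal simply asserts the conclusion of this nontrivial step.

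Second, you explicitly flag (``The main obstacle is verifying that every outermost configuration can indeed be removed by such a Powell move'') a transitivity claim that you do not prove; that claim is precisely the mathematical content of the lemma, so at that point the proposal stops being a proof. Your inductive step is also structured differently from the paper's and in a way that raises additional difficulties: you work at the level of arcs in $P$ and try to remove an arc intersection by a braid move across an outermost subdisk, but that subdisk may contain punctures, in which case ``sliding across'' is not a $P$-isotopy and you would need the very transitivity claim you've left open. The paper instead performs the reduction three-dimensionally: it swaps an outermost subdisk of $b'_i$ into $b_j$, producing a new dual disk system \emph{disjoint} from the old one (hence handled by the Special Case) and intersecting $\{b'_i\}$ in fewer arcs. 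That order of operations — 3D outermost-disk swap first, then Special Case — is what actually closes the induction, and your arc-level version does not.
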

\begin{proof}
Let $\{b_1, ..., b_g\}$ and $\{b'_1, ..., b'_g\}$ be different sets of  orthogonal disks in $B$.  For each $1 \leq i \leq g$, let $\bbb_i$ (resp  $\bbb'_i)$ denote the arc $\bdd b_i - \bdd a_i$ (resp $\bdd b'_i - \bdd a_i$).  

{\bf Special Case:}  $\{b_1, ..., b_g\}$ and $\{b'_1, ..., b'_g\}$ can be isotoped (leaving the $a_i$ invariant) so that the collection of curves $\{\bbb_1, ..., \bbb_g\}$ and $\{\bbb'_1, ..., \bbb'_g\}$ are disjoint.  

\medskip

Let $P$ be the planar surface obtained by compressing $T$ along $\{a_1, ..., a_g\}$.  For each $1 \leq i \leq g$ $\bbb_i \cup \bbb'_i$ forms a circle $c_i$ in $P$; if any $c_i$ bounds a disk in $P$  then the corresponding disks $b_i$ and $b'_i$ can be isotoped to coincide; the proof now proceeds by induction on the size of the set $\mathcal{S}$ of indices for which this is true; we may as well take $\mathcal{S} = \{1, ..., k\}$; when $k = g$ we are done.  Temporarily ignore the boundary components of $P$ that correspond to all $a_i, i \leq k$ and choose a circle $c_{\ell}, \ell > k$ that is then innermost. Then between $\bdd b_{\ell}$ and $\bdd b'_{\ell}$ there are only $1$-handle summands whose indices lie in $\mathcal{S}$.  Isotope these once around the curve $a_{\ell}$ (a sequence of moves each corresponding to Powell generator  $D_\nu$).  This isotopy (Powell equivalent to the identity) moves $b_{\ell}$ to $b'_{\ell}$, adding another index to $\mathcal{S}$ and completing the proof in this case.  

\medskip

The general case is proven via induction on the number of components of $b_i \cap b'_j, i, j \in \{1, ..., g\}$. We can assume that all components of intersection are arcs and say that such an arc is outermost in $b'_i$, say, if it cuts off a subdisk of $b'_i$ that contains neither another arc of intersection nor the point $b'_i \cap a_i$.   Choose an arc of intersection $\gamma$ that is outermost in, say,  $b'_i$ and let $b_j$ be the other disk containing $\gamma$.  Replace the subdisk of $b_j$ that is cut off by $\gamma$ (on the side disjoint from the point $b_j \cap a_j$) by the outermost subdisk of $b'_i$ cut off by $\gamma$.  This converts $\{b_1, ..., b_g\}$ to a disjoint family (so an equivalent family, per the Special Case) that intersects $\{b'_1, ..., b'_g\}$ in fewer arcs, as required.  
\end{proof}

\medskip  
It will later be shown (see Corollary \ref{cor:orthog2}) that the choice of just a single primitive disk determines the Powell equivalence type.  

\bigskip

{\bf Cautionary Note:}   Suppose $\{a_1, ..., a_{g_1}\}$ is a non-separating collection of disks in $A$ and $c$ is a simple closed curve $c$ in $T$ that separates $T$ into one component containing the curves  $\{\bdd a_1, ..., \bdd a_{g_1}\}$ and the other a genus $g_2$ surface. Then $c$ automatically bounds a disk in $A$; if $c$ also bounds a disk in $B$ then there is an orientation preserving homeomorphism $h:(S^3, T, c) \to (S^3, T_g, c_{g_1})$,.  The disks bounded by $c$ in $A$ and $B$ divide each into handlebodies, so the two pieces into which $T$ is divided are each Heegaard splittings of the $3$-balls in which they lie.  Moreover, if the Powell Conjecture is true in genus $g_1$ and genus $g - g_1$, then any two such homeomorphisms are Powell equivalent. But there is no obvious reason why different choices of $c$ will give Powell equivalent homeomorphisms, since not all braid moves on the $a_i$ (even if the $a_i$ are a primitive set) are Powell moves.  For example, if we rotate $a_1$ around a $\bdd$ reducing disk for $B$ whose boundary lies on the genus $g_2$ side of $c$, $c$ will be taken to a curve $c'$ that satisfies the same conditions.  Yet it is not immediately apparent that such an isotopy of $T$ to itself is a Powell move.  
\bigskip

\section{Powell-like moves in a more general setting} \label{sect:extend}

Following Lemma \ref{lemma:braid2} and the remarks that precede it we can focus on four types of Goeritz elements, each of which is a Powell move, but less stringently defined than Powell's originals:  

\begin{enumerate}
\item Let $S \subset S^3$ be a reducing sphere for $T_g$ bounding a ball $B$ containing $g_1$ of the standard genus $1$ summands.  A {\em standard bubble move} is an isotopy of $B$ through some path in $T_g - B$ that returns $B, B \cap T$ to itself, see Figure \ref{fig:bubble}.

 \begin{figure}[ht!]
    \centering
    \includegraphics[scale=0.5]{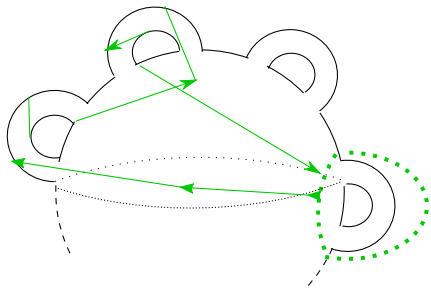}
    \caption{Standard bubble move} \label{fig:bubble}
    \end{figure}

\item Let $B$ be a bubble containing just a single standard genus $1$ summand.  A {\em standard flip} is the homeomorphism $(B, B \cap T) \to (B, B \cap T)$ shown in Figure \ref{fig:PowellPic} as Powell's move $D_\omega$.

 \begin{figure}[ht!]
    \centering
    \includegraphics[scale=0.5]{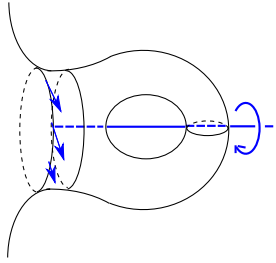}
    \caption{Standard flip}  \label{fig:flip}
    \end{figure}

\item Let $B_1, B_2$ be disjoint bubbles, each containing just a standard genus $1$ summand, and let $v \subset T_g - (B_1 \cup B_2)$ be an arc connecting them. Let $B$ be the reducing ball obtained by attaching the $1$-handle regular neighborhood of $v$ to $B_1, B_2$.  A {\em standard switch} is the homeomorphism $(B, B \cap T_g) \to (B, B \cap T_g)$ shown in Figure \ref{fig:PowellPic} as Powell's move $D_{\eta_{12}}$. 
\medskip

 \begin{figure}[ht!]
    \centering
    \includegraphics[scale=0.5]{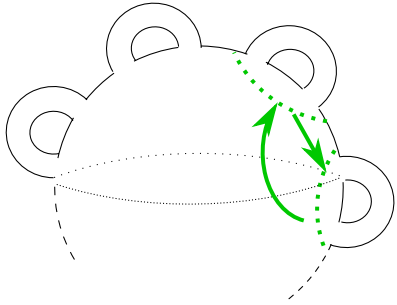}
    \caption{Standard switch}  \label{fig:switch}
    \end{figure}

\item Again let $B_1, B_2$ be disjoint bubbles, each containing just a standard genus $1$ summand.  For $i = 1, 2$ let $\mu_i \subset B$ be a meridian disk for $T_g \cap B_i$ and $\ell_i \subset A$ be a longitudinal disk for $T_g \cap B_i$.  Let $v \subset T_g$ be an embedded arc connecting $\bdd \mu_1$ to $\bdd \ell_2$ with the interior of $v$ disjoint from $\mu_1, \mu_2, \ell_1, \ell_2$.  Then $\mu_1 \cup \ell_2 \cup v$ is called a {\em standard eyeglass} in $T_g$  see Figure \ref{fig:eyeglass}.  The disks $\mu$ and $\ell$ are called the {\em lenses} of the eyeglass, $v$ is the {\em bridge}.  The eyeglass defines is a natural automorphism $(S^3, T_g) \to (S^3, T_g)$ called an {\em eyeglass twist} which is supported on a $3$-ball regular neighborhood of the eyeglass, see Figure \ref{fig:eyeglass1}.  Powell's generator $D_\theta$ is a standard eyeglass twist, in fact the model for this discussion.

\begin{figure}[ht!]
    \centering
    \includegraphics[scale=0.7]{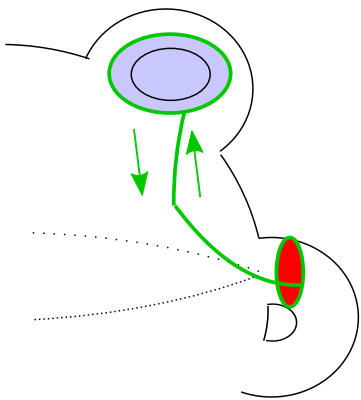}
    \caption{Standard eyeglass}  \label{fig:eyeglass}
    \end{figure}

 \begin{figure}[ht!]
    \centering
    \includegraphics[scale=0.75]{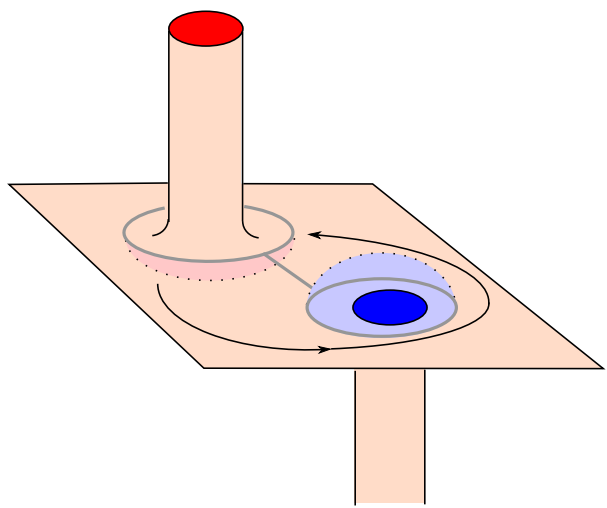}
    \caption{Eyeglass twist}  \label{fig:eyeglass1}
    \end{figure}

\end{enumerate}

Each of these moves can be put in a more general setting.  Suppose $M$ is any compact oriented $3$-manifold and $T$ is a stabilized Heegaard splitting, with some genus $1$ summands of $T$ designated as standard.  Then all the definitions above make sense.  More generally, if we drop the word 'standard' all the concepts make sense even when specific genus $1$ summands have not been designated as standard.  In fact, the notion of eyeglass, and so eyeglass twist, can often apply even when the splitting $M = A \cup_T B$ is not stabilized.  

\begin{defin} \label{defin:eyeglass}
An {\em eyeglass} is the union of two disks, $\ell_a, \ell_b$ ( the {\em lenses} ) with an arc $v$ (the {\em bridge}) connecting their boundaries.  Suppose an eyeglass $\eta$ is embedded in $M$ so that the $1$-skeleton of $\eta$ (called the {\em frame}) lies in $T$, one lens is properly embedded in $A$, and the other lens is properly embedded in $B$.  The embedded $\eta$ defines a natural automorphism $(M, T) \to (M, T)$,  as illustrated in Figure \ref{fig:eyeglass1}, called an {\em eyeglass twist}.
\end{defin}

{\bf Remark:}  It will be useful later to note that a $\bdd$-compression of one of the lenses of an eyeglass $\eta$ into $T$ breaks the eyeglass twist around $\eta$ into a composition of two eyeglass twists $\eta_1 \eta_2$, where in each $\eta_i$ the $\bdd$-compressed disk is replaced by one of the disks that is the result of the $\bdd$-compression.  See Figure \ref{fig:eyeglass3}.  

 \begin{figure}[ht!]
    \centering
    \includegraphics[scale=0.55]{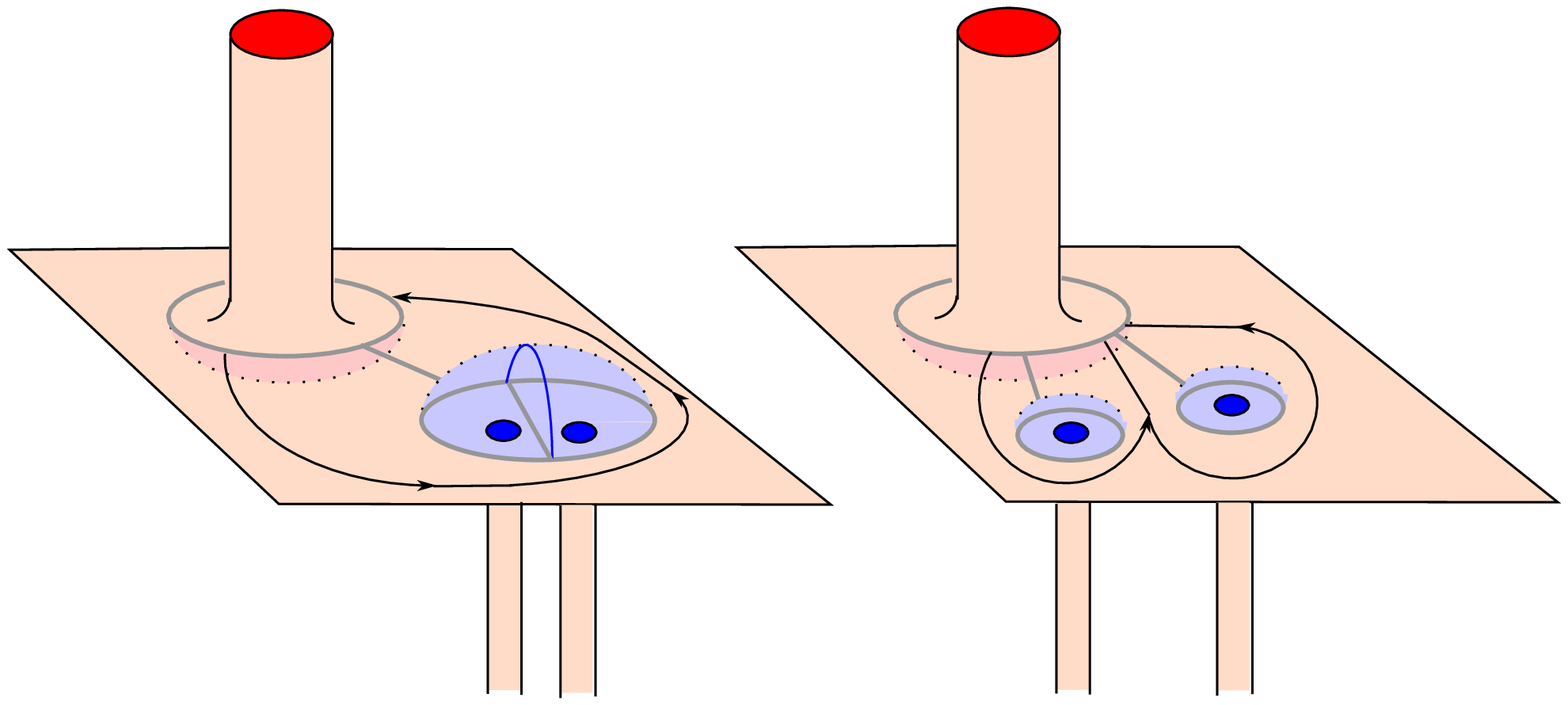}
   \caption{} \label{fig:eyeglass3}
    \end{figure}

\section{Weak reduction} \label{sect:weak}

Recall (\cite{CG}) that a Heegaard splitting $M = A \cup_T B$ is {\em weakly reducible} if there is an essential disk in $A$ and one in $B$ so that their boundaries are disjoint in $T$.  Since the two lenses in an eyeglass are disjoint, it follows that only weakly reducible splittings can support non-trivial eyeglass twists.  In this section we consider families of weakly reducing disks in Heegaard splittings of $S^3$ and describe conditions which guarantee that an eyeglass twist is a Powell move.  Of course the Powell Conjecture would assert that any eyeglass twist is a Powell move.  

\begin{defin} Suppose a non-empty collection $\{a_1, ..., a_m\} $ of disks embedded in $A$ is disjoint from a non-empty collection $\{b_1, ..., b_n\}$ of disks in $B$.  Then the pair of collections is called {\em weakly reducing}.  If the complement in $T$ of their boundaries $\{\bdd a_1, ..., \bdd a_m\} \cup \{\bdd b_1, ..., \bdd b_n\}$ consists of planar surfaces in $T$ then the pair of disk collections is {\em complete}.  If the complement is a single surface then they are called {\em non-separating}.  
\end{defin}

Suppose $\{a_1, ..., a_{g_1}\}$,  $\{b_{g_1+1}, ..., b_g\}$ is a complete non-separating weakly-reducing pair of disk collections for a genus $g$ splitting  $S^3 = A \cup_T B$.  Let  $c \subset T$ be a simple closed curve that is disjoint from  the boundaries of the two sets and separates one set from the other.  Then $c$ bounds a disk $a_c$ in $A$ and a disk  $b_c$ in $B$.  The union $a_c \cup b_c$ is a sphere, dividing $S^3$ into two balls, and $T$ intersects each ball in a Heegaard surface.   Waldhausen's theorem applied to each then shows that there is an orientation preserving homeomorphism $(S^3, T, c) \to (S^3, T_g, c_{g_1})$.  Moreover, {\em if the Powell Conjecture is true for genus $g_1$ and for genus $g - g_1$ splittings of $S^3$} then all such homeomorphisms are Powell equivalent. 
%
%
%An {\em eyeglass} is the union of two disks, $\ell_a, \ell_b$ ( the {\em lenses} ) with an arc $\beta$ (the {\em bridge} connecting their boundaries.  Suppose an eyeglass $\eta$ is embedded in $S^3$ so that the $1$-skeleton of $\eta$ lies in $T$, one lens is properly embedded in $A$ and the other lens is properly embedded in $B$.  The embedded $\eta$ defines a natural automorphism $(S^3, T) \to (S^3, T)$, called an {\em eyeglass twist}, supported on the $3$-ball neighborhood of the embedded eyeglass illustrated in Figure \ref{fig:eyeglass1}.
%
% \begin{figure}[ht!]
%    \centering
%    \includegraphics[scale=0.75]{}
%    \caption{}  \label{fig:eyeglass1}
%    \end{figure}

% 
%Note that a $\bdd$-compression of one of the lenses into $T$ breaks a twist around $\eta$ into a composition of two eyeglass twists $\eta_1 \eta_2$, where in each $\eta_i$ the $\bdd$-compressed disk is replaced by one of the disks that is the result of the $\bdd$-compression.  See Figure \ref{fig:eyeglass3}.  
%
% \begin{figure}[ht!]
%    \centering
%    \includegraphics[scale=0.55]{}
%   \caption{} \label{fig:eyeglass3}
%    \end{figure}
%

Consider the standard genus $g$ Heegaard splitting $S^3 = A \cup_{T_g} B$, in which we denote the meridians of the standard summands by $\{a_1, ..., a_{g}\}$ and the longitudes by $\{b_{1}, ..., b_g\}$.  In particular $|a_i \cap b_j| =  |\bdd a_i \cap \bdd b_j| = 1, 1 \leq i, j \leq g$.  The curve $c_{g_1} \subset T_g$ separates $T_g$ into two components, $T_A$ containing $\{\bdd a_1, ..., \bdd a_{g_1}\}$ and $T_B$ containing $\{\bdd b_{g_1+1}, ..., \bdd b_g\}$.  Let $P_A$ (resp $P_B$) be the planar surface $T_A - \{a_1, ..., a_{g_1}\}$ (resp $T_B - \{b_{g_1 + 1}, ..., b_g\}$) and let $P$ be the combined planar surface $P_A \cup_c P_b$.  
%\medskip

\begin{lemma} \label{lemma:eyeglass1o} Suppose $\eta$ is an eyeglass in $T_g$ whose lenses consists of some $a_i, i \leq g_1$ and some $b_j, j \geq g_1 + 1$ and whose bridge $v$ intersects $c_{g_1}$ exactly once.  Then an eyeglass twist along $\eta$ is a Powell move. 
\end{lemma}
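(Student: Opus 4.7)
The plan is to exhibit a Powell move $\phi: (S^3, T_g) \to (S^3, T_g)$ that carries $\eta$ to a standard eyeglass $\eta^0$ of the type underlying Powell's generator $D_\theta$.  Once such a $\phi$ is in hand, the conjugation identity
\[ T_\eta \;=\; \phi^{-1} \, T_{\phi(\eta)} \, \phi \;=\; \phi^{-1} \, D_\theta \, \phi \]
exhibits the eyeglass twist $T_\eta$ as a composition of Powell moves.

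To construct $\phi$ I would proceed in two stages.  First, \emph{normalize the lenses}: by Lemma \ref{lemma:braid2} any braid move of the standard genus one summands over their complementary $g$-punctured sphere is a Powell move, and by restricting such braids to one side of $c_{g_1}$ at a time one may independently permute the first $g_1$ summands and the last $g - g_1$ summands while preserving $c_{g_1}$ setwise.  This arranges that the lens $a_i$ is carried to $a_1$ and the lens $b_j$ to $b_{g_1+1}$.  Second, \emph{straighten the bridge}: because $v$ meets $c_{g_1}$ exactly once, it decomposes as $v_A \cup v_B$, with $v_A \subset T_A$ an arc from $\bdd a_1$ to $c_{g_1}$ and $v_B \subset T_B$ an arc from $c_{g_1}$ to $\bdd b_{g_1+1}$.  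The isotopy class of $v_A$ rel endpoints is determined by how it winds around the other $T_A$-side summands, and each such winding can be removed by a pure braid move of one of those summands around an appropriate loop in $T_A$.  Being pure, such braids return each summand (in particular summand $1$) to itself and so do not disturb the earlier normalization, and by Lemma \ref{lemma:braid2} each is a Powell move.  A symmetric procedure straightens $v_B$ on the $T_B$ side, and afterward $\phi(\eta) = \eta^0$ as required.

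The main technical point I expect is the transitivity claim in stage two: that pure braid moves of summands $2, \ldots, g_1$ act transitively on the set of rel-endpoint isotopy classes of arcs from $\bdd a_1$ to $c_{g_1}$ in $T_A$.  Compressing $T_A$ along $\{a_1, \ldots, a_{g_1}\}$ yields a planar surface in which such an arc connects a fixed pair of boundary circles, and up to rel-endpoint isotopy it is classified by its pattern of winding around the remaining boundary circles; each such winding is correctable by braiding a single summand via Lemma \ref{lemma:braid2}.  The analogous step for $v_B$ is symmetric, and with this verification in place the construction of $\phi$ — and hence the lemma — follows.
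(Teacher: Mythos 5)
Your proposal is correct in spirit and shares the paper's overall structure (conjugate $D_\theta$ to the right lenses, then use Lemma~\ref{lemma:braid2} to normalize the bridge), but the second stage differs in a meaningful way.  The paper does not braid the same-side summands; it applies Lemma~\ref{lemma:braid2} to the \emph{opposite} side, sliding the $T_B$ $1$-handles (i.e.\ the bubble containing $c_{g_1}$) around $T_A$ to carry $v\cap T_A$ onto $v'\cap T_A$, and then symmetrically slides the $T_A$ $1$-handles around $T_B$.  Because the endpoint of $v\cap T_A$ lies on $c_{g_1}$, dragging that bubble around a loop in $T_A$ literally composes the arc with an arbitrary element of $\pi_1(T_A)$, so transitivity is immediate and one never has to worry about which twists are realized by summand braids.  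Your version instead braids summands $2,\ldots,g_1$ within $T_A$ and appeals to a transitivity claim for pure braids acting on arc classes.  That claim is believable, but as written it has two soft spots: (i) you speak of ``rel-endpoint isotopy classes,'' whereas eyeglass twists only depend on the bridge up to unrestricted isotopy, so the classification you invoke is both unnecessary and, strictly read, not what the pure braids of $2,\ldots,g_1$ surject onto — windings around the second copy of $\partial a_1$ in $P_A$ (and around $c_{g_1}$) are not touched by those braids, and must instead be absorbed by sliding endpoints / twisting the $T_B$ bubble; and (ii) you silently assume $v_A$ already lies in $P_A$, i.e.\ is disjoint from $\partial a_2,\ldots,\partial a_{g_1}$, which is not part of the hypothesis and must first be arranged by braid moves.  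Both gaps are fixable, but the paper's opposite-side sliding sidesteps them cleanly, which is the main thing each approach ``buys'': yours is perhaps more explicit about what ``straightening the bridge'' means combinatorially, while the paper's choice of which bubble to slide makes the transitivity trivially true and automatically clears the $\partial a_k$'s out of the way.
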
 

\begin{proof}
Observe that, after perhaps some conjugating Powell moves as described at the start of Section \ref{sect:compose}, Powell's generator $D_{\theta}$ can be viewed as an eyeglass twist around the same lenses and a bridge $v' \subset P$ which also intersects $c_{g_1}$ once.   
Now use Lemma \ref{lemma:braid2}  on the $1$ handles in $T_B$ to slide them around $T_A$ so that afterwards $v \cap T_A$ coincides with $v' \cap T_A \subset P_A$.  Then symmetrically slide the $1$-handles of $T_A$ around $T_B$ until  the entire $v = v' \subset P$.  
\end{proof}

\begin{lemma}  \label{lemma:eyeglass2o} Suppose $\eta$ is an eyeglass in $T_g$ whose lenses consist of a disk $a \subset A$ with $\bdd a \subset P_A$ and a disk $b \subset B$ with $\bdd b \subset P_B$.  Suppose further that the bridge $v$ intersects $c_{g_1}$ exactly once.  Then an eyeglass twist along $\eta$ is a Powell move. 
\end{lemma}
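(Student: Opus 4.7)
The plan is to reduce Lemma \ref{lemma:eyeglass2o} to the case already handled by Lemma \ref{lemma:eyeglass1o}, by iteratively simplifying the lenses $a$ and $b$ using the Remark following Definition \ref{defin:eyeglass}, which expresses an eyeglass twist as a product of two eyeglass twists after a $\partial$-compression of one lens.

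First I would put $a$ in general position with the primitive collection $\{a_1,\ldots,a_{g_1}\}$ and $b$ in general position with $\{b_{g_1+1},\ldots,b_g\}$, eliminating closed-curve intersections by innermost disk arguments so that all remaining intersections are arcs. Set
\[ n(a,b) \;=\; |a \cap (a_1 \cup \cdots \cup a_{g_1})| \;+\; |b \cap (b_{g_1+1} \cup \cdots \cup b_g)|, \]
and induct on $n(a,b)$. For the inductive step, suppose $a$ meets some $a_i$ in an arc and pick $\alpha \subset a \cap a_i$ that is outermost in $a_i$. The outermost subdisk $E \subset a_i$ has $\partial E = \alpha \cup \beta$ with $\beta \subset \partial a_i \subset P_A$, and so provides a $\partial$-compressing disk for $a$ into $T$. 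Applying the Remark decomposes the twist along $\eta$ as a product of two eyeglass twists along $\eta_1,\eta_2$ whose new lenses $a',a''$ again have boundaries in $P_A$ (the replacement arcs follow $\beta \subset P_A$) and intersect $\bigcup_i a_i$ in strictly fewer arcs. Since $\beta \subset T_A$ is disjoint from $c_{g_1}$, the new bridges $v_1,v_2$ (modifications of $v$ as in Figure \ref{fig:eyeglass3}) can be arranged to still meet $c_{g_1}$ exactly once, so the inductive hypothesis applies to each factor.

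In the base case $n(a,b)=0$, the lens $a$ is disjoint from every $a_i$. Cutting $A$ along $\{a_1,\ldots,a_{g_1}\}$ yields a handlebody $A''$ whose boundary, on the $T_A$ side, is the disk $P_A$ together with the $2g_1$ scar disks, joined along $c_{g_1}$ to $T_B$. Since $\partial a \subset P_A$ lies inside this disk, $a$ is $\partial$-parallel in $A''$, cobounding a ball with a subdisk $D$ containing the scars of $\{a_i : i \in I\}$ for some $I \subset \{1,\ldots,g_1\}$. If $|I|=0$ then $a$ is trivial in $A$ and the twist is isotopic to the identity; if $|I|=1$ then $a$ is isotopic in $A$ to the corresponding $a_{i_0}$; if $|I|\ge 2$ then an arc in $D \subset P_A$ separating a single scar from the rest is a further $\partial$-compressing arc for $a$, and the Remark again writes the twist as a product of twists with smaller $|I|$. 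Iteration reduces to $|I|\le 1$, and the symmetric procedure for $b$ reduces to $|J|\le 1$ on the $T_B$ side. At that point $a,b$ are isotopic to standard $a_{i_0}, b_{j_0}$ respectively and the bridge still meets $c_{g_1}$ once, so Lemma \ref{lemma:eyeglass1o} finishes the argument.

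The main obstacle is the geometric bookkeeping at each $\partial$-compression: one must verify that the new bridges $v_1,v_2$ still meet $c_{g_1}$ exactly once and that the new lenses retain boundaries in $P_A$ or $P_B$. Both conditions follow because every $\partial$-compressing arc we invoke lies entirely in $T_A$ (for the $a$-side) or entirely in $T_B$ (for the $b$-side), disjoint from $c_{g_1}$; hence the bridge modifications are supported off $c_{g_1}$ and preserve $|v \cap c_{g_1}|$. Once this is confirmed, both inductions terminate in a configuration handled by Lemma \ref{lemma:eyeglass1o}.
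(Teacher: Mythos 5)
Your argument is correct and, once the preliminary induction collapses, is essentially the paper's proof: the paper likewise reduces to Lemma~\ref{lemma:eyeglass1o} by noting that $\bdd a$ cobounds (with some scar circles) a planar subsurface in $P_A$ and then using the Remark following Definition~\ref{defin:eyeglass} to break the twist along $\eta$ into twists around the individual $a_i$ (and symmetrically the $b_j$) that $\bdd a$ (resp.\ $\bdd b$) encircles. Do note, however, that your opening induction on $n(a,b)$ is vacuous under the hypothesis of this particular lemma: since $\bdd a \subset P_A$, the circle $\bdd a$ is already disjoint from each $\bdd a_i$, so after eliminating circle components of intersection there can be no arcs at all in $a \cap a_i$ (an arc of intersection would need endpoints on $\bdd a \cap a_i$ or $a \cap \bdd a_i$, both empty), and similarly for $b$; the arc-outermost induction you describe is really the content of the \emph{next} lemma, Lemma~\ref{lemma:eyeglass3o}, where the hypothesis is only $\bdd a \subset T_A$.
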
 
\begin{proof}  As in the proof of Lemma \ref{lemma:eyeglass1o} we can assume that the bridge $v$ lies in $P = P_A \cup P_B$.  Then $\bdd a$ is coplanar with a number of boundary components of $P_A$ (informally, $\bdd a$ bounds a disk containing copies of some of the $a_i$).  Noting the remark following Definition \ref{defin:eyeglass}, we can view the twist around $\eta$ as a composition of twists around each of these components of $P_A$.  Do the same for $\bdd b$.  The result follows now from Lemma \ref{lemma:eyeglass1o}.
\end{proof}
\medskip

\begin{lemma}  \label{lemma:eyeglass3o} Suppose $\eta$ is an eyeglass in $T_g$ whose lenses consist of a disk $a \subset A$ with $\bdd a \subset T_A$ and a disk $b \subset B$ with $\bdd b \subset T_B$.  Suppose further that the bridge $v$ intersects $c_{g_1}$ exactly once.  Then an eyeglass twist along $\eta$ is a Powell move. 
\end{lemma}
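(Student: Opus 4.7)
The plan is to reduce to Lemma \ref{lemma:eyeglass2o} by iteratively $\bdd$-compressing the lenses $a$ and $b$ into $T$, and invoking the remark after Definition \ref{defin:eyeglass} to split the eyeglass twist at each compression step. The eventual composition will consist entirely of eyeglass twists satisfying the hypotheses of Lemma \ref{lemma:eyeglass2o}.

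First I would handle the $A$-side. Isotope $a$ to remove all simple closed curves of intersection with $a_1 \cup \ldots \cup a_{g_1}$, which is possible since $A$ is irreducible. If $\bdd a$ still meets some $\bdd a_i$ with $i \leq g_1$, then $a \cap a_i$ contains an arc, and an outermost such arc in $a_i$ cuts off a subdisk $D \subset a_i$ whose boundary consists of an arc $\beta \subset a$ and an arc $\gamma \subset \bdd a_i \subset T_A$. Such $D$ is a $\bdd$-compression disk for the lens $a$ into $T$. By the remark after Definition \ref{defin:eyeglass}, $\bdd$-compressing $a$ along $D$ expresses the twist along $\eta$ as a composition $\eta_1 \eta_2$, where each $\eta_i$ has as its $A$-lens one of the two disks produced by the compression (each still a disk in $A$ with boundary in $T_A$), as its $B$-lens the original $b$, and a bridge obtained from the original bridge $v$ by concatenation with a subarc of $\gamma$. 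Crucially, since $\gamma \subset T_A$ is disjoint from $c_{g_1}$, each new bridge continues to cross $c_{g_1}$ exactly once.

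The total number of arcs in (current $A$-lens)$\cap(a_1 \cup \ldots \cup a_{g_1})$ drops by one under each such compression, so the process terminates, expressing the original twist as a composition of eyeglass twists each of whose $A$-lens $a'$ satisfies $\bdd a' \subset P_A$, whose $B$-lens is still $b$, and whose bridge still meets $c_{g_1}$ in one point. I would then apply the symmetric procedure on the $B$-side, $\bdd$-compressing each $b$-lens along outermost arcs of intersection with $b_{g_1+1}, \ldots, b_g$; the compression arcs now lie in $T_B$, hence again are disjoint from $c_{g_1}$, so both the $A$-side conditions and the bridge-intersection condition are preserved. After this second pass every factor $\eta'$ in the composition has $\bdd a' \subset P_A$, $\bdd b' \subset P_B$, and bridge crossing $c_{g_1}$ exactly once, which is exactly the hypothesis of Lemma \ref{lemma:eyeglass2o}. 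Thus each factor is a Powell move, and so is the original twist.

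The main obstacle I anticipate is confirming that the bridge-intersection invariant really survives each application of the remark: one must track exactly how the new bridges are built from $v$ and a subarc of the compression arc $\gamma$. The saving observation is that in both passes $\gamma$ lies entirely in one of $T_A$ or $T_B$, so $|\gamma \cap c_{g_1}| = 0$ and the algebraic (and geometric) intersection count of the bridge with $c_{g_1}$ is unchanged. A minor secondary point is the standard verification that outermost arcs, and hence valid $\bdd$-compressions, continue to be available at each stage; this follows from the usual innermost-disk/outermost-arc argument together with the irreducibility of $A$ and $B$.
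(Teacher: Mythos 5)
Your proof follows essentially the same strategy as the paper's (iterated outermost-arc $\bdd$-compression of the lenses, splitting the eyeglass twist via the remark after Definition \ref{defin:eyeglass}, terminating at Lemma \ref{lemma:eyeglass2o}). You are also right — and slightly more explicit than the paper — that the $c_{g_1}$-intersection count of the bridge is preserved because the compression arc $\gamma$ lies in $T_A$ (or $T_B$), disjoint from $c_{g_1}$. But there is a gap you do not address: you never arrange for the bridge $v$ to lie in $P$ (i.e.\ disjoint from all $\bdd a_i$ and $\bdd b_j$) before performing the $\bdd$-compression. Without that reduction, the outermost arc $\gamma \subset \bdd a_i$ may well intersect $v$, in which case the compression disk $D$ meets the eyeglass frame and the remark after Definition \ref{defin:eyeglass} (which presupposes the $\bdd$-compression is disjoint from the eyeglass) does not apply cleanly. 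The paper begins with ``as before, we may assume that $v$ lies in $P$'' precisely to guarantee $\gamma \cap v = \emptyset$, and then flags in a cautionary note that, after each compression, one of the new bridges intersects $a_i$ again — so this disjointness is not part of the hypothesis and must be re-achieved at every inductive step. You also omit the degenerate case noted in the paper, where one of the disks produced by a $\bdd$-compression is inessential; this requires a separate small argument (extend $v$ past the compression disk and cancel the resulting intersection point) rather than an induction step.
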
 

\begin{proof}  As before, we may assume that $v$ lies in $P$.  The proof is by induction on the number of arc components of $a \cap (a_1\cup ...\cup a_{g_1})$ and $b \cap (b_{g_1 + 1} \cup ... \cup b_g)$ .  If there are none, the result follows from Lemma \ref{lemma:eyeglass2o}.  Otherwise choose an arc of intersection that is outermost in, say, $a_i$.  Use the arc to $\bdd$-compress $a$, breaking up the twist around $\eta$ into the twist around two eyeglass curves as illustrated in Figure \ref{fig:eyeglass3}.  (If one of the new $A$ lenses is inessential then $v$ could have been extended past the $\bdd$-compressing disk, replacing the arc of intersection by a new point of intersection of $a_i$ with $v$.  This can be removed as usual.)  The result now follows by induction.  (But, cautionary note, one of the new eyeglasses has a bridge that intersects $a_i$.  Fortunately, having $v$ disjoint from $\{a_1, ..., a_{g_1}\}$ is not part of our hypothesis but was achieved by argument.  This condition is needed, else the boundary compression used in the argumentwould intersect $v$.)
\end{proof}

Here is an application: 

\begin{prop} \label{prop:orthog2o} Suppose  $b \subset B$ is a disk orthogonal to $a_1$.  Then there is a Powell move that leaves $a_1$ unchanged and carries $b_1$ to $b$.
\end{prop}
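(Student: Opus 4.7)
The plan is to extend $\{a_1\}$ to the standard primitive cut system $\{a_1, a_2, \ldots, a_g\}$ in $A$, with standard orthogonal dual $\{b_1, b_2, \ldots, b_g\}$ in $B$, and reduce to Lemma~\ref{lemma:orthog1}. The target is to exhibit $b$ as the first element of a dual set $\{b, b_2, \ldots, b_g\}$ to this primitive system. Once achieved, Lemma~\ref{lemma:orthog1} furnishes a Powell-equivalence of the identity (which corresponds to the standard dual $\{b_1, \ldots, b_g\}$) with a homeomorphism $h'$ that fixes each $\bdd a_i$ and sends $\bdd b_1 \to \bdd b$ (while sending $\bdd b_j \to \bdd b_j$ for $j \geq 2$). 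This $h'$ is then the Powell move we want, with $h'(a_1) = a_1$ and $h'(b_1) = b$.

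For Lemma~\ref{lemma:orthog1} to apply directly, $b$ must be disjoint from each $a_j$ and each $b_j$ for $j \geq 2$. I would reduce to this case by induction on the total intersection count
\[
n \;=\; \sum_{j \geq 2}\bigl(|\bdd b \cap \bdd a_j| \;+\; |b \cap b_j|\bigr),
\]
assuming all intersections are already minimized. In the inductive step ($n \geq 1$), I would locate either an outermost arc of $\bdd b$ on $T_g$ cut off by the curves $\bdd a_2, \ldots, \bdd a_g$, or an outermost arc of $b \cap b_j$ in $b$. Such an arc provides a $\bdd$-compression datum for $b$ along the offending $a_j$ (respectively a surgery disk between $b$ and $b_j$). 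Using the $\bdd$-compressed piece together with a bridge in $T_g$ crossing the separating curve $c_1$ exactly once, I would build an eyeglass $\eta$ with one lens in $A$ parallel to $a_j$ (or to $a_{c_1}$) and the other lens in $B$ parallel to the $\bdd$-compressed piece of $b$, both with boundaries on the prescribed sides of $c_1$. By Lemma~\ref{lemma:eyeglass3o}, the eyeglass twist along $\eta$ is a Powell move; by taking the eyeglass frame disjoint from $a_1$, the twist fixes $a_1$; and by construction it strictly reduces $n$.

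Iterating brings us to $n = 0$, at which point Lemma~\ref{lemma:orthog1} supplies the desired Powell move as described above, completing the proof.

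The main obstacle is the inductive step: constructing the eyeglass so that simultaneously (a) its lens and bridge configuration fits the hypothesis of Lemma~\ref{lemma:eyeglass3o} (one lens with boundary in $T_A$, the other in $T_B$, bridge meeting $c_1$ exactly once), (b) the eyeglass frame avoids $a_1$ so that the twist fixes it, and (c) the twist actually reduces $n$. The delicate coordination is in placing the outermost-arc data on the correct side of $c_1$ and routing the bridge through $c_1$ a single time while keeping $a_1$ outside the support of the twist.
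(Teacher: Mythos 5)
The proposal takes a genuinely different route from the paper's, and the gap you flag at the end is real and unresolved. The paper first handles the \emph{disjoint} case $b \cap b_1 = \emptyset$ with a single explicitly constructed eyeglass twist: band $\bdd b$ to $\bdd b_1$ across the bicollar of $\bdd a_1$ to form $b_+ \subset B$, take the eyeglass with lenses $a_1$ and $b_+$ and bridge a small arc in the bicollar, and then directly verify the hypotheses of Lemma~\ref{lemma:eyeglass3o} with $c$ the boundary of a neighborhood of $\bdd a_1 \cup \bdd b_1$; this twist carries $b_1$ to $b$. The general case is then an induction on $|b \cap b_1|$, resolved by standard outermost-arc surgery (swap a subdisk of $b_1$ for a subdisk of $b$) and two applications of the disjoint case. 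There is no need to clear intersections with $a_j, b_j$ for $j \geq 2$, and no dependence on Lemma~\ref{lemma:orthog1}.

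The obstacle you identify in your inductive step is not merely a matter of careful bookkeeping; the naive candidates actually fail the hypotheses of Lemma~\ref{lemma:eyeglass3o}. With $g_1 = 1$ and separating curve $c_1$, that lemma requires the $A$-lens to have boundary in $T_A$ (the genus-1 side of $c_1$, containing $\bdd a_1$) and the $B$-lens to have boundary in $T_B$. But $\bdd a_j$ for $j \geq 2$ lies in $T_B$, so ``a lens in $A$ parallel to $a_j$'' is on the wrong side. Swapping the roles (taking $\{a_2, \ldots, a_g\}$ as the $A$-collection and $\{b_1\}$ as the $B$-collection) fixes the $A$-lens placement but then forces the $B$-lens boundary onto the genus-1 side, which is hard to arrange with a piece of $b$. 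Beyond this, you do not show that the twist decreases $n$ rather than shuffling intersections around --- eyeglass twists can easily create new intersections outside the immediate band. The paper sidesteps all of this by only ever using $a_1$ itself as the $A$-lens (so its boundary is automatically in $T_A$ and $a_1$ is automatically fixed), and by inducting on $|b \cap b_1|$ via outermost-arc surgery, which is a purely local disk swap and never calls another eyeglass twist in the inductive step.
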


\begin{proof}  We can assume $\bdd b$ intersects a bicollar neighborhood $Y \subset T_g$ of $\bdd a_1$ in an arc parallel to but disjoint from $\bdd b_1 \cap Y$.  

 {\bf Special Case:}  $b$ and $b_1$ are disjoint.
 \medskip
 
Band $\bdd b$ to $\bdd b_1$ together using one of the two bands they cut off from the bicollar $Y$, creating a new disk $b_+$.  Push the interior of this band into $B$ to properly embed $b_+$ in $B$.  Consider $a_1$ and $b_+$ as lenses of an eyeglass $\eta$ whose bridge $v$ is one of the small arcs $(\bdd b \cap Y) - \bdd a_1$. It is easy to see (Figure \ref{fig:orthog2a}) that an appropriate eyeglass twist of $\eta$ will move $b_1$ to $b$.  The eyeglass visibly satisfies the criterion required by Lemma \ref{lemma:eyeglass3o} :  Let 
$c$ be the boundary of a regular neighborhood of $\bdd a_1 \cup b_1$.  Then $c$ bounds a punctured torus in $T_g$ (in fact the first standard summand) that  contains $a_1$, is disjoint from $b_+$, and intersects $v$ in a single point.  See Figure \ref{fig:orthog2b}. 

 \begin{figure}[ht!]
\labellist
\small\hair 2pt
\pinlabel  $\bdd b_1$ at 235 70
\pinlabel  $\bdd b$ at 300 30
\pinlabel  $\bdd b_+$ at 250 10
\pinlabel  $\bdd a_1$ at 110 65
\pinlabel  $v$ at 138 25
\endlabellist
    \centering
    \includegraphics[scale=0.75]{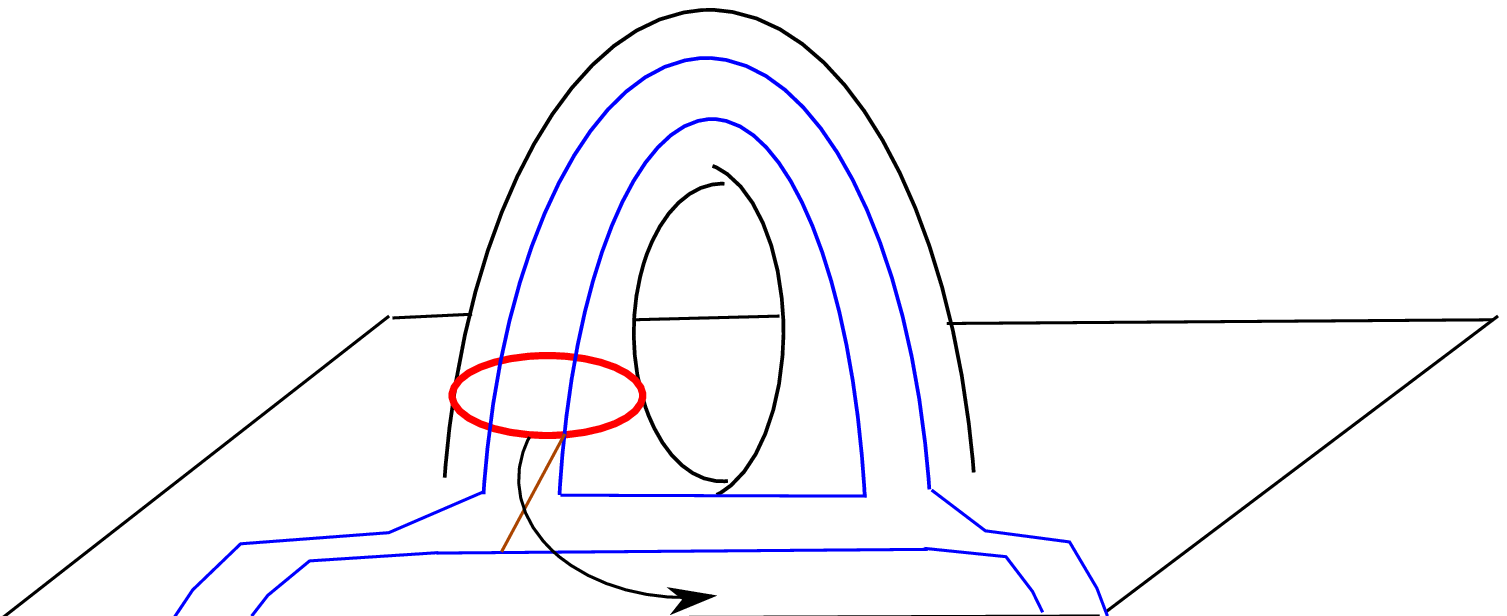}
\caption{} 
 \label{fig:orthog2a}
    \end{figure}
    
     \begin{figure}[ht!]
\labellist
\small\hair 2pt
\pinlabel  $c$ at 155 125
\pinlabel  $\bdd b_+$ at 250 10
\pinlabel  $\bdd a_1$ at 110 65
\pinlabel  $v$ at 135 25
\endlabellist
    \centering
    \includegraphics[scale=0.75]{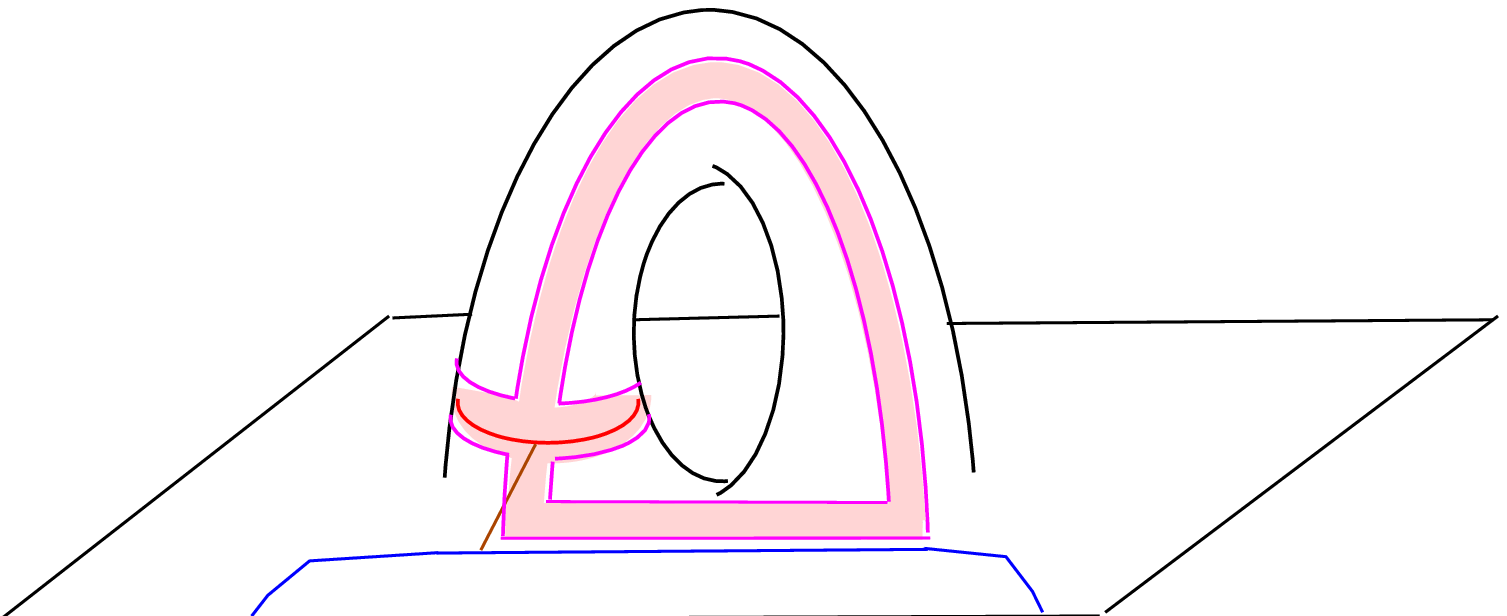}
\caption{} 
 \label{fig:orthog2b}
    \end{figure}

 The general case now follows much as in the proof of Lemma \ref{lemma:orthog1}, via induction on the number of components of $b \cap b_1$:  Suppose $b$ is a disk intersecting $b_1$ in $n$ arcs and the proposition is true for disks that intersect $b_1$ in fewer than $n$ arcs.  Let $\gamma$ be an arc of $b \cap b_1$ that is outermost in $b$ (with reference to the point $b \cap a_1$) and let $b_o \subset b$ be the disk $\gamma$ cuts off from $b$.  Replace the disk in $b_1$ cut off by $\gamma$ with $b_o$ to get a disk $b'$ orthogonal to $a_1$ that is disjoint from $b_1$ and intersects $b$ in fewer than $n$ arcs.  By the special case just proven there is a Powell move $\rho$ that carries $b'$ to $b_1$ and so carries $b$ to a disk $b''$ intersecting $\rho(b') = b_1$ in fewer than $n$ arcs.  By inductive assumption, there is a Powell move $\rho'$ carrying $b"$ to $b_1$.  Then $\rho' \rho(b) = \rho'(b'') = b_1$ as required. 
\end{proof}

\begin{cor}  \label{cor:orthog2}  Suppose the Powell conjecture is true for genus $g-1$ splittings of $S^3$ and $T \subset S^3$ is a genus $g$ splitting.  Then the choice of a single primitive disk $a \subset A$ (or primitive $b \subset B$) defines a Powell equivalence class of homeomorphism $(S^3, T) \to (S^3, T_g)$.
\end{cor}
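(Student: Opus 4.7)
The plan is to reduce the corollary to Lemma \ref{lemma:orthog0} applied with $g_1 = 1$ and $g_2 = g - 1$, using Proposition \ref{prop:orthog2o} to bring not only the primitive disk $a$ but also a chosen dual disk into alignment across two homeomorphisms. Given $h, h' \colon (S^3, T) \to (S^3, T_g)$ both sending the primitive disk $a$ to $a_1$, the task is to show that $h' \circ h^{-1}$ is isotopic in $T_g$ to a Powell move.

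First I would fix any disk $b \subset B$ dual to $a$ (one exists by the definition of primitive), so that $|a \cap b| = 1$. Both $h(b)$ and $h'(b)$ are then disks in $B$ orthogonal to $a_1$ in the standard splitting $(S^3, T_g)$. Proposition \ref{prop:orthog2o} (applied to each of $h(b)$ and $h'(b)$, and then inverted) supplies Powell moves $\rho, \rho'$, each leaving $a_1$ unchanged, with $\rho(h(b)) = b_1$ and $\rho'(h'(b)) = b_1$. The adjusted homeomorphisms $\rho \circ h$ and $\rho' \circ h'$ then send the orthogonal pair $(a, b)$ to the orthogonal pair $(a_1, b_1)$, so Lemma \ref{lemma:orthog0} with $g_1 = 1$ and $g_2 = g - 1$ --- exactly where the hypothesis that Powell's Conjecture holds in genus $g-1$ is used --- gives that $\rho \circ h$ and $\rho' \circ h'$ are Powell equivalent. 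Composing on the left with the Powell moves $\rho^{-1}$ and $\rho'^{-1}$ then yields the Powell equivalence of $h$ and $h'$.

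I anticipate no substantial obstacle, since the heavy lifting already lives in Proposition \ref{prop:orthog2o} and Lemma \ref{lemma:orthog0}. The only points that warrant a moment's thought are the observation that the auxiliary choice of dual disk $b$ does not affect the final conclusion (it is swept away by the intermediate Powell moves, and the conclusion depends only on $a$), and the routine fact that pre- and post-composition by Powell moves preserves the Powell equivalence relation.
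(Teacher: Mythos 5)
Your argument is correct and follows essentially the same route as the paper: pick a dual disk $b$, use Proposition \ref{prop:orthog2o} to normalize both homeomorphisms so they carry $(a,b)$ to $(a_1,b_1)$, and then invoke the genus $g-1$ hypothesis to handle what remains on the other side of $S_1$. The paper states the last step tersely as ``apply the inductive assumption to the genus $g-1$ side of the reducing sphere $S_1$,'' whereas you make it explicit by citing Lemma \ref{lemma:orthog0} with $g_1=1$, $g_2=g-1$; these are the same argument.
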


\begin{proof}  Since $a$ is primitive, there is a disk $b \subset B$ whose boundary intersects the boundary of $a$ in a single point.  Choose a homeomorphism $h: T \to T_g$ which carries the pair $(a, b)$ to the pair $(a_1, b_1)$.  By Proposition \ref{prop:orthog2o} the Powell equivalence class of $h$ does not depend on $b$.  
% As in Lemma \ref{lemma:orthog1} let $b$ and $b'$ be different disks in $B$ orthogonal to $a$.  There are homeomorphisms $h: (S^3, T) \to (S^3, T_g)$ and $h': (S^3, T) \to (S^3, T_g)$ that take respectively each pair $(\bdd a, \bdd b)$ and $(\bdd a, \bdd b')$  to the meridian and longitude respectively of the first standard summand of $T_g$.  Following Proposition \ref{prop:orthog2} we may further assume that $h'h^{-1}$ takes $c_{1}$ (and so the reducing sphere in which it lies) to itself.    
The proof then follows by applying the inductive assumption to the genus $g-1$ side of the reducing sphere $S_{1}$ for $T_g$.
\end{proof}

Returning now to the general discussion, we drop the assumption that the $a_i$ and $b_j$ are primitive, but otherwise maintain the notation above.  Suppose $\{a_1, ..., a_{g_1}\}$ and $\{b_{g_1+1}, ..., b_g\}$ are a (not necessarily primitive) pair of non-separating weakly-reducing disk collections for $T \subset S^3$.  Let  $c \subset T$ be a simple closed curve that separates the two sets and $\eta$ is an eyeglass in $T$ whose lenses consists of a disk $a \subset A$ with $\bdd a \subset T_A$ and a disk $b \subset B$ with $\bdd b \subset T_B$.   

\begin{lemma} \label{lemma:eyeglass4}   Suppose that the bridge $v$ for $\eta$ intersects $c$ exactly once.   Then an eyeglass twist along $\eta$ does not change the Powell equivalence class of any homeomorphism $h:(S^3, T, c) \to (S^3, T_g, c_{g_1})$.
\end{lemma}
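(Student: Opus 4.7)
The plan is to push $\eta$ forward through $h$ to the standard setting and then invoke Lemma \ref{lemma:eyeglass3o}.

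Let $\tilde{\eta} := h(\eta)$, with lenses $\tilde{a} := h(a)$, $\tilde{b} := h(b)$ and bridge $\tilde{v} := h(v)$.  Since $h$ is an orientation-preserving homeomorphism of pairs carrying $T \to T_g$ and $c \to c_{g_1}$, it sends the handlebody pair $\{A, B\}$ to $\{A_g, B_g\}$; after swapping labels $A_g \leftrightarrow B_g$ if needed we may assume $h(A) = A_g$, so $\tilde{a} \subset A_g$ and $\tilde{b} \subset B_g$ are properly embedded disks.  The bridge $\tilde{v}$ meets $c_{g_1}$ in exactly one point, so $\bdd \tilde{a}$ and $\bdd \tilde{b}$ lie on opposite components of $T_g - c_{g_1}$.

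The point requiring care is that the hypotheses of Lemma \ref{lemma:eyeglass3o} are met, i.e.\ that $\bdd \tilde{a}$ lies in the standard $T_A$ and $\bdd \tilde{b}$ in the standard $T_B$.  The image $h(T_A)$ of the $a$-side of $c$ is one of the two components of $T_g - c_{g_1}$; if it coincides with the standard $T_A$ we are done directly.  If instead the labels are swapped, note that the proof of Lemma \ref{lemma:eyeglass3o} is symmetric under interchanging the two sides of $c_{g_1}$:  the outermost-arc induction goes through identically after replacing $\{a_1,\ldots,a_{g_1}\}$ and $\{b_{g_1+1},\ldots,b_g\}$ by $\{a_{g_1+1},\ldots,a_g\}$ and $\{b_1,\ldots,b_{g_1}\}$, reducing in the base case to the evident analog of Lemma \ref{lemma:eyeglass2o}.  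In either case the eyeglass twist $\tau_{\tilde{\eta}}$ on $(S^3, T_g)$ is a Powell move.

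Eyeglass twists are defined inside a regular neighborhood of the eyeglass frame, and are therefore natural under homeomorphism; in particular $\tau_{\tilde{\eta}} = h \circ \tau_\eta \circ h^{-1}$.  Thus $(h\circ\tau_\eta)\circ h^{-1} = \tau_{\tilde{\eta}}$ is a Powell move, which by definition says that $h$ and $h\circ\tau_\eta$ are Powell equivalent, i.e.\ an eyeglass twist along $\eta$ leaves the Powell equivalence class of $h$ unchanged.  The only delicate step is the matching of the $T_A$/$T_B$ convention under $h$, which was handled above by appealing to the symmetry inherent in the argument for Lemma \ref{lemma:eyeglass3o}.
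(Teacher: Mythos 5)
Your proof is correct and follows the same route as the paper's own (very terse) proof: push the eyeglass forward through $h$ and appeal to Lemma \ref{lemma:eyeglass3o}, using naturality of eyeglass twists under homeomorphism ($\tau_{h(\eta)} = h\circ\tau_\eta\circ h^{-1}$). The paper's proof is essentially one sentence; you spell out the details, and in particular you flag and handle a real subtlety the paper glosses over — that $h$ need not match the handlebody labels ($A\mapsto A_g$) nor the sides of $c_{g_1}$ ($T_A \mapsto T_A^{\mathrm{std}}$) — by observing that the induction in Lemma \ref{lemma:eyeglass3o} is symmetric under interchanging the roles of the two sides. That observation is the right justification; the bare ``swap labels $A_g\leftrightarrow B_g$'' phrase would be circular on its own, but your symmetric-induction argument is what actually carries the load, and since ``$\tau_{\tilde\eta}$ is a Powell move'' is a labeling-independent assertion, the reduction is legitimate. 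The only content of the paper's proof that you omit is the remark (via Waldhausen) that a homeomorphism $h$ as in the statement exists at all; this is not logically needed since the lemma quantifies over ``any'' such $h$.
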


\begin{proof}  First note that there is such a homeomorphism:  the hypothesis guarantees that $c$ bounds a disk in both $A$ and $B$ and so is part of a reducing sphere for $T$.  Then Waldhausen's theorem applied to both $T_A$ and $T_B$ provides a homeomorphism $h$. (This homeomorphism is unique up to Powell equivalence if the Powell Conjecture is true for lower genus splittings.  We do not need that inductive assumption here.)  Now apply Lemma \ref{lemma:eyeglass3o}.   \end{proof}

\begin{thm} \label{thm:cindep}  Suppose $\{a_1, ..., a_{g_1}\}$ and $\{b_{g_1+1}, ..., b_g\}$ are a (not necessarily primitive) pair of non-separating weakly-reducing disk collections for $T \subset S^3$.  Then
\begin{itemize}
\item  there is a homeomorphism $h: T \to T_g$ so that $\{h(a_1), ..., h(a_{g_1})\}$ lie on the side of the separating sphere $S_{g_1}$ that contains $c_1$ and $\{h(b_{g_1+1}), ..., h(b_g)\}$ lie on the other side and
\item  If the Powell Conjecture is true for genus $g_1$ and genus $g - g_1$ splittings, any two such homeomorphisms are Powell equivalent.  
\end{itemize}
%The Powell equivalence class determined by $\{a_1, ..., a_{g_1}\}$, 
%$\{b_{g_1+1}, ..., b_g\}$ and $c$ as above is independent of the choice of  $c$. That is, assuming 
%the Powell Conjecture is true for lower genus splittings, the collection $\{a_1, ..., a_{g_1}, 
%b_{g_1+1}, ..., b_g\}$ completely determines the Powell equivalence class of $T$.  
\end{thm}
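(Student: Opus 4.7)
The plan is to reduce the statement to the fixed-$c$ version discussed in the paragraph preceding Lemma~\ref{lemma:eyeglass1o}, by showing that the admissible separating curve is unique up to ambient isotopy of $T$.

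\emph{Existence.}  The hypotheses give $g = g_1 + g_2$ disjoint boundary circles on $T$.  Since the pair is non-separating, cutting $T$ along these $g$ circles yields a connected surface whose $2g$ boundary circles, once capped by disks, produce a closed surface of Euler characteristic $2 - 2g + 2g = 2$.  Hence the cut surface $P \subset T$ is a $2g$-holed sphere (so the pair is automatically complete).  Choose any simple closed curve $c \subset P$ separating the $2g_1$ boundary components of $P$ coming from the $\bdd a_i$ from the $2g_2$ coming from the $\bdd b_j$.  As discussed in the paragraph preceding Lemma~\ref{lemma:eyeglass1o}, $c$ bounds disks in both $A$ and $B$ and hence lies on a reducing sphere for $T$; Waldhausen's theorem, applied to the two resulting balls, produces a homeomorphism $(S^3, T, c) \to (S^3, T_g, c_{g_1})$ of the required form.

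\emph{Uniqueness.}  Suppose $h, h'$ both satisfy the conclusion. Set $c := h^{-1}(c_{g_1})$ and $c' := h'^{-1}(c_{g_1})$; both curves lie in $P$ and each realizes the same partition of $\bdd P$ into the $a$-side and the $b$-side. In a $2g$-holed sphere a simple closed curve realizing a fixed partition of its boundary is unique up to isotopy, so there is an ambient isotopy $\phi_t : T \to T$, supported in $P$ (so fixing each $\bdd a_i$ and $\bdd b_j$), with $\phi_0 = \mathrm{id}$ and $\phi_1(c) = c'$. Set $\tilde h := h' \circ \phi_1$. Then $\tilde h^{-1}(c_{g_1}) = c$, and $\tilde h$ is Powell equivalent to $h'$, because $\tilde h \circ (h')^{-1} = h' \phi_1 (h')^{-1}$ is isotopic in $T_g$ to the identity through $h' \phi_t (h')^{-1}$.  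Now $h$ and $\tilde h$ both take $c$ to $c_{g_1}$, so the fixed-$c$ statement from the paragraph preceding Lemma~\ref{lemma:eyeglass1o} (which uses the Powell Conjecture in genera $g_1$ and $g - g_1$) gives that $h$ and $\tilde h$ are Powell equivalent, and hence so are $h$ and $h'$.

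\emph{Main obstacle.}  The substantive content of the argument is the planarity-rigidity step, i.e., that $c$ and $c'$ are forced to be isotopic in $P$.  This is precisely where the simultaneous presence of both the $a$-disks and the $b$-disks is essential, in sharp contrast with the Cautionary Note of Section~\ref{sect:compose}: there, with only the $a_i$ present, the analogous separating curve lives in a subsurface of positive genus and its isotopy class is no longer pinned down by the partition of $\{\bdd a_1, \ldots, \bdd a_{g_1}\}$, leaving open the possibility of Powell-inequivalent choices.  Once the planarity rigidity is in place, the rest of the argument is bookkeeping built on previously established results.
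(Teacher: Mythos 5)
Your existence argument and your reduction to the fixed-$c$ statement are fine, but the central claim -- that in a $2g$-holed sphere a simple closed curve realizing a given partition of the boundary components is unique up to isotopy -- is false, and it is precisely this failure that the theorem has to overcome.  Already in a $4$-holed sphere $P$ with boundaries labelled $1,2,3,4$: let $c$ separate $\{1,2\}$ from $\{3,4\}$ and let $d$ separate $\{1,3\}$ from $\{2,4\}$.  The Dehn twist $T_d$ fixes $\partial P$, so $c' := T_d(c)$ also separates $\{1,2\}$ from $\{3,4\}$; yet $i(c, c') = i(c,d)^2 = 4 > 0$, so $c$ and $c'$ are not isotopic.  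More generally, the (pure) braid-type mapping class group of a punctured sphere acts with infinite orbits on the set of separating curves inducing a fixed boundary partition.

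Because of this there is no isotopy $\phi_t$ of $T$ supported in $P$ with $\phi_1(c) = c'$; all you can say -- and this is exactly what the paper says -- is that $c$ and $c'$ are related by a braid automorphism of $P$, which in general is not isotopic to the identity.  Your step that $\tilde h (h')^{-1} = h'\phi_1(h')^{-1}$ is isotopic to the identity in $T_g$ therefore fails.  The paper's proof supplies precisely what your argument skips: it writes the braid automorphism carrying $c'$ to $c$ as a word in the standard generators of the mixed braid group $\mathcal{B}_{g_1,g_2}$ (half-twists within $P_A$, half-twists within $P_B$, and one full twist $\sigma$ along an arc meeting $c$ once), then inducts on the number of occurrences of $\sigma$, conjugating each occurrence into an eyeglass twist whose bridge meets $c$ once and invoking Lemma~\ref{lemma:eyeglass4}.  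That induction is where the real content -- and the inductive Powell hypothesis in genera $g_1$ and $g - g_1$ -- actually lives, and the proposal bypasses it with an assertion that is not true.
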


\begin{proof}  Choose any simple closed curve $c \subset T$ that lies between the collections $\{\bdd a_1, ..., \bdd a_{g_1}\}$ and $\{\bdd b_{g_1+1}, ..., \bdd b_g\}$.  As noted in the proof of Lemma \ref{lemma:eyeglass4} there is a homeomorphism $h:(S^3, T, c) \to (S^3, T_g, c_{g_1})$ as required, and its Powell equivalence class depends only on the choice of $c$.  The goal then is to show that the Powell equivalence class does not even depend on $c$.  

As before, let $P$ be the connected planar surface $T - \{a_1, ..., a_{g_1} \cup b_{g_1+1}, ..., b_g\}$, so that $c \subset P$ separates the $2g_1$ components of $\bdd P$ corresponding to $\{\bdd a_1, ..., \bdd a_{g_1}\}$ from the $2(g - g_1)$ components corresponding to $\{\bdd b_{g_1+1}, ...,\bdd b_g\}$.   Call the former component $P_A$ and the latter $P_B$.  

Suppose $c' \subset P$ is another such simple closed curve.  Picturing $P$ as a $2g$ punctured sphere, a braid automorphism will move $c'$ to $c$.  So we need only show that any braid automorphism $\rho$ that moves the boundary components $\bdd P \cap P_A$ back to themselves (and so the boundary components of $\bdd P \cap P_B$ back to themselves) does not change the Powell equivalence class of $h$.  

%\medskip

 It is a classic result that the ``mixed'' braid group $\Bb_{g_1, g_2}$ on the $g$-punctured sphere $P_A \cup P_B$ can be generated by a set of $g_1 - 1$ half-twists in $P_A$ (call this subgroup $\Bb_a$), $g_2 - 1$, half-twists in $P_B$ (call this subgroup $\Bb_b$) and a single full twist $\sigma$ along a chosen arc $\gamma$ connecting a specific component $a_0$ of $\bdd P_A$ to a specific component $b_0$ of $\bdd P_B$.  This is an eyeglass twist with lenses $a_0$ and $b_0$ and bridge $\gamma$.  Choose $\gamma$ to be an arc crossing $c$ once.
 %The curve $c$ cuts $\gamma$ into the two arcs $\gamma_a \subset P_A$ and $\gamma_b \subset P_B$. 

Clearly the subgroups $\Bb_a$ and $\Bb_b$ commute.   The proof that $\rho \in \Bb_{g_1, g_2}$ is Powell equivalent to one that lies in $\Bb_a \times \Bb_b$ proceeds by induction on  the number $n_\sigma$, of occurrences of $\sigma$ in $\rho$ when expressed as a product of these generators.  If $\sigma$ does not appear, then $c$ is preserved by the braid.  With no loss the initial segment of $\rho$ can be written $\alpha \beta \sigma$, where $\alpha \in \Bb_a$ and $\beta \in \Bb_b$.  That is, $\rho = \alpha \beta \sigma \omega$, where $\omega \in \Bb_{g_1, g_2}$ has one less occurence of $\sigma$.  Then $\rho = ( \alpha \beta \sigma \beta^{-1}\alpha^{-1}) \alpha \beta \omega$.  But it is easy to see that  $(\alpha \beta \sigma \beta^{-1}\alpha^{-1})$ is an eyeglass twist along an eyeglass whose bridge is $\alpha \beta(\gamma)$, an arc that still crosses $c$ once.  The proof then follows from Lemma \ref{lemma:eyeglass4}.
\end{proof}

To state the implication of Theorem  \ref{thm:cindep} a little less formally:

\begin{cor} \label{cor:weakreduce} If the Powell Conjecture is true for genus $g_1$ and genus $g - g_1$, then a pair $\{a_1, ..., a_{g_1}\}$, $\{b_{g_1+1}, ..., b_g\}$ of non-separating weakly-reducing disk collections for $T \subset S^3$ determines a Powell equivalence class of homeomorphisms $h: T \to T_g$.
\end{cor}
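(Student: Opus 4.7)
The plan is to read the corollary as essentially a repackaging of Theorem \ref{thm:cindep}. First I would invoke the first bullet of Theorem \ref{thm:cindep} under the stated inductive hypotheses to produce at least one homeomorphism $h: T \to T_g$ sending $\{a_1,\dots,a_{g_1}\}$ to disks on the $c_1$-side of $S_{g_1}$ and $\{b_{g_1+1},\dots,b_g\}$ to disks on the other side. This establishes that the set of candidate homeomorphisms is nonempty, so it makes sense to speak of its Powell equivalence classes.

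Next I would invoke the second bullet of Theorem \ref{thm:cindep}: any two such homeomorphisms $h, h'$ are Powell equivalent. Combining the two bullets, the Powell equivalence class is well defined and independent of all auxiliary choices (such as the separating curve $c$, or the particular standardization used inside each side of $S_{g_1}$). This is exactly the content of the corollary.

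The only thing to double-check is that the inductive hypothesis invoked in the corollary (Powell for genus $g_1$ and genus $g-g_1$) is the same one used in Theorem \ref{thm:cindep}. Scanning the statement of the theorem, this is indeed the case: the genus $g_1$ and genus $g-g_1$ Powell Conjectures are used precisely to conclude uniqueness of the homeomorphism on each side of the reducing sphere $S_{g_1}$ via Waldhausen's theorem, and no further hypothesis is needed in the corollary.

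Since this reduction is essentially a direct appeal to Theorem \ref{thm:cindep}, I do not expect any genuine obstacle; the main (already handled) point is the work inside Theorem \ref{thm:cindep} showing that the equivalence class does not depend on the choice of separating curve $c$, which relied on Lemma \ref{lemma:eyeglass4} and the generation of the mixed braid group $\Bb_{g_1,g_2}$ by $\Bb_a$, $\Bb_b$, and a single transverse eyeglass twist.
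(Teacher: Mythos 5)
Your reading is exactly right: the paper offers no separate proof for Corollary \ref{cor:weakreduce}, introducing it only with the phrase ``To state the implication of Theorem \ref{thm:cindep} a little less formally,'' and the corollary is indeed nothing more than the two bullets of that theorem repackaged (existence from the first bullet, well-definedness of the Powell class from the second). Your proposal matches the paper's intent and adds no extraneous hypotheses.
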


In fact, only one set of disks is needed, so long as we know it has at least one complementary set:  

\begin{thm}  Suppose the collection $ \{b_{g_1+1}, ..., b_g\}$ of compressing disks in $B$ can be extended to two possibly different non-separating complete collections $\{a_1, ..., a_{g_1}\}, \{b_{g_1+1}, ..., b_g\}$ and $\{a'_1, ..., a'_{g_1}\}, \{b_{g_1+1}, ..., b_g\}$ of weakly reducing disks for $T$.  
%Assuming the Powell Conjecture is true for genus $g_1$, t
 If the Powell Conjecture is true for genus $g_1$ and genus $g - g_1$, then the Powell equivalance classes determined by each extension via Theorem \ref{thm:cindep} are the same.
\end{thm}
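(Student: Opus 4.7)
The plan is to connect the two extensions by a sequence of elementary disk modifications on the $A$-side and show that each such modification preserves the Powell equivalence class. Let $h_1, h_2 \colon T \to T_g$ be the two homeomorphisms produced by Theorem \ref{thm:cindep} from $(\{a_i\}, \{b_j\})$ and $(\{a'_i\}, \{b_j\})$ respectively. Using the Powell conjecture in genus $g-g_1$ applied to the Heegaard splitting of the ball on the $B$-side of the reducing sphere $S_{g_1}$, together with a Lemma \ref{lemma:orthog1}-type argument, I would first arrange via Powell post-composition that $h_1(b_j) = h_2(b_j)$ for every $j$. The remaining task is to show that $\phi := h_2 h_1^{-1}$, which now fixes each $h_1(b_j)$ setwise, is Powell equivalent to the identity.

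Next, I would induct on the minimal number of arcs of intersection between $\{a_i\}$ and $\{a'_i\}$ in $T$, after removing circles of intersection in $A$ by innermost disk arguments. Following the template of Lemma \ref{lemma:orthog1}, choose an arc $\gamma$ outermost in some $a'_j$, cutting off a subdisk $D \subset a'_j$; use $D$ to $\bdd$-compress the corresponding $a_i$, producing a new collection $\{a''_i\}$ that is still a non-separating complete weakly reducing extension of $\{b_j\}$ but with strictly fewer arcs of intersection with $\{a'_j\}$. The base case, when $\{a_i\}$ and $\{a'_i\}$ are disjoint in $T$, is then handled by a standard analysis of the cut-system complex of $A$ restricted to disks disjoint from $\{b_j\}$: any two such cut systems are connected by a sequence of disk slides along arcs in $T$ disjoint from all of the $\bdd b_j$.

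The key step, and the principal obstacle, is showing that a single outermost-arc swap (or equivalently a single disk slide in the base case) preserves the Powell equivalence class. The strategy is to realize each such modification as an eyeglass twist and appeal to Lemma \ref{lemma:eyeglass4}. Concretely, choose a separating curve $c \subset T$ of the type used in Theorem \ref{thm:cindep} and an auxiliary disk $b_k \in \{b_j\}$. Form an eyeglass $\eta$ in $T$ with lenses $a_i \subset A$ and $b_k \subset B$ and a bridge that runs along the arc $\gamma$ and crosses $c$ exactly once. Applying the remark following Definition \ref{defin:eyeglass}, a $\bdd$-compression of the lens $a_i$ along $\gamma$ decomposes the eyeglass twist into a composition of twists along eyeglasses whose lenses include $a''_i$ and whose bridges still cross $c$ exactly once; Lemma \ref{lemma:eyeglass4} then applies to each factor. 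Careful bookkeeping, using the Powell conjecture in genus $g_1$ on the $A$-side to absorb any side-effects on the remaining $a_k$'s, should show the swap preserves the Powell class, and iterating yields the theorem.
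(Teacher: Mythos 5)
Your high-level strategy---remove circles, induct on arcs of intersection via outermost-arc swaps, and reduce to the case where $\{a_i\}$ and $\{a'_i\}$ are disjoint---does mirror the paper's outline. But the ``key step'' is where the paper's argument lives, and your treatment of it does not hold up.

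The central confusion is that Lemma \ref{lemma:eyeglass4} says a certain \emph{Goeritz element} (an eyeglass twist) is a Powell move; it does not say that two disk collections determine the same Powell class. You propose to realize a disk slide / outermost-arc swap as an eyeglass twist with lenses $a_i$ and some $b_k$, but an eyeglass twist is supported in a regular neighborhood of the eyeglass and acts as the identity on its own lenses: it does not carry $a_i$ to $a''_i$, and nothing in your description shows that the Goeritz element $h'h^{-1}$ (where $h,h'$ are the trivializations determined by the two disk systems) \emph{is} an eyeglass twist. Invoking the remark after Definition \ref{defin:eyeglass} to ``decompose the twist into factors with lens $a''_i$'' conflates the decomposition of one eyeglass twist into two with the (entirely different) problem of comparing the homeomorphisms $h$ and $h'$. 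Likewise, your opening step of ``arranging $h_1(b_j) = h_2(b_j)$ by Powell post-composition'' has no content here: Theorem \ref{thm:cindep} already pins down the Powell class from the data, and post-composing by a Powell move is tautologically harmless---the issue is whether the two given classes agree.

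What the paper actually does for the disjoint case is quite different and is the real work. It first runs a substitution argument in the genus-$g_1$ handlebody $H$ obtained by attaching $2$-handles to $A$ along the $b_j$, reducing to $a_i = a'_i$ for $i\geq 2$ with $a_1,a'_1$ two disjoint meridians of a solid torus $W\subset H$. It then analyzes how the arcs $\beta_j$ (dual to the $b_j$) distribute across the two components $W_\pm$ of $W$ cut along $a_1\cup a'_1$. If they all sit on one side, a single separating curve $c$ is simultaneously compatible with both collections and Theorem \ref{thm:cindep} finishes. If not, the paper performs a \emph{bubble move} around a longitude of $\bdd W$ to push them to one side, using the genus-$(g-g_1)$ Powell hypothesis to make the bubble standard so the bubble move preserves the Powell class. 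The bubble move is where the inductive hypothesis earns its keep; your proposal never reaches this step, and the eyeglass-twist mechanism you suggest in its place does not substitute for it.
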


\begin{proof}  
Let $H \subset S^3$ be the genus $g_1$ handlebody obtained from $A$ by attaching $2$-handles along $ \{b_{g_1+1}, ..., b_g\}$.  

{\em Special Case:}  $\{a_1, ..., a_{g_1}\}$ and $\{a'_1, ..., a'_{g_1}\}$ are disjoint.  

We first claim that, in this case, we can proceed from one family to the other by a sequence of substitutions of a single disk at a time. This is obvious (indeed there is nothing to prove) if each $a'_j$ is parallel to one of the $a_i$, so we induct on the number of $a'_j$ that are not parallel to any $a_i$.   With no loss of generality, say $a'_1$ is not parallel to any $a_i$.  Since the set $\{a_1, ..., a_{g_1}\}$ is non-separating in $H$,  $H - \{a_1, ..., a_{g_1}\}$ is a ball containing the disjoint collection of properly embedded disks $\{a'_1, ..., a'_{g_1}\}$.  Each $a_i$ gives rise to a pair of disks (called {\em twins}) on the boundary of the ball. Since $H - \{a'_1, ..., a'_{g_1}\}$ is connected, there is at least one $a_i$, say $a_1$, which is parallel to no $a'_j$ in $H$ and whose twins in the boundary of the ball lie on opposite sides of $a'_1$.  Then replacing $a'_1$ by $a_1$ changes only a single disk in $\{a'_1, ..., a'_{g_1}\}, \{b_{g_1+1}, ..., b_g\}$ and provides a non-separating complete collection $\{a_1, a'_2,  ..., a'_{g_1}\}, \{b_{g_1+1}, ..., b_g\}$  with more disks parallel in $A$ to disks in $\{a_1, ..., a_{g_1}\}$, completing the inductive step.  So, following the claim, we may as well assume that $a_i = a'_i$ for all $2 \leq i \leq g_1$.   

Let $W$ be the solid torus obtained from $H$ by deleting each $a_i = a'_i, 2 \leq i \leq g_1$.  The remaining disks $a_1, a'_1$ are disjoint meridian disks of $W$, dividing it into two cylindrical components $W_{\pm}$, each topologically a ball.  Now consider the $g - g_1$ properly embedded arcs  $\{\beta_{g_1}, ..., \beta_g \}$ in $W$, each arc $\beta_j$ dual to the disk $b_j \subset B$.  If, say, $W_-$ contains none of the $\beta_j$, then all  the $\beta_j$ lie in $W_+$.  Then there is a curve $c \subset \bdd A \cap W_+$ that separates $\{\bdd a'_1, \bdd a_1, ..., \bdd a_{g_1}\}$ from $ \{\bdd b_{g_1+1}, ..., \bdd b_g\}$.   Then in $S^3$,  $c$ bounds disks $a_c \subset A$ and $b_c \subset B$, and so provides a reducing sphere $a_c \cup b_c$ for the splitting $T$ that determines, as shown in Theorem \ref{thm:cindep}, the same Powell equivalence class for the pair of weakly compressing collections  $\{a_1, ...,  a_{g_1}\}, \{b_{g_1+1}, ..., b_g\}$ as it does for  $\{a'_1, a_2 ..., a_{g_1}\}, \{b_{g_1+1}, ..., b_g\}$, completing the argument in this case.  (In effect,  $\{a_1, ..., a_{g_1}\}$ and $ \{a'_1, ..., a'_{g_1}\}$ are merely different choices of complete collections of $\bdd$-reducing disks for $A$ in the side of the sphere $a_c \cup b_c$ on which they lie).

The remaining possibility in this special case is that some of the $\beta_j$ lie in each of $W_{\pm}$.   There is a curve $c \subset T$ that  separates $\{ \bdd a_1, ..., \bdd a_{g_1}\}$ from $ \{\bdd b_{g_1+1}, ..., \bdd b_g\}$ and intersects $\bdd a'_1$ in two points. (So $c$ bounds a disk $a_c \subset A$ intersecting the disk $a'_1$ in a single arc $\gamma$.)    Via Theorem \ref{thm:cindep} this determines a Powell equivalence class for the pair of weakly compressing collections  $\{a_1, ...,  a_{g_1}\}, \{b_{g_1+1}, ..., b_g\}$.  The union of a component of $a_c - \gamma$ and a component of $a'_1 - \gamma$ cuts off a bubble containing all the arcs $\beta_j$ lying in, say, $W_-$.  A bubble move around a longitude of $\bdd W$ will push $a'_1$ into a new position so that all the $\beta_j$ lie in $W_+$, where the previous argument applies.  The bubble itself may not be standard in the given Powell equivalence class but, invoking the inductive assumption that the Powell Conjecture is true for genus $g - g_1$, the summand contained in the bubble may be made standard without changing the Powell equivalence class.  Then the bubble move itself does not change the Powell equivalence class, completing the proof in this special case.  

{\em General Case}  The general case now proceeds classically, by induction on $|\{a_1, ..., a_{g_1}\} \cap \{a'_1, ..., a'_{g_1}\}|$, the number of arcs in which the two systems of $A$ disks intersect.  Consider an outermost arc of intersection in $a'_1$, say, cutting off an outermost disk $D' \subset a'_1$. .  With no loss assume the arc also lies in $a_1$.  The correct choice of subdisk $D \subset a_1$, when attached to $D'$ along the arc of intersection will give a disk $a_D \subset A$ so that $\{a_D, a_2, ..., a_{g_1}\}$ also satisfies the hypotheses of the theorem.  Since $a_D$ is disjoint from $a_1$ it follows from the special case above that $\{a_D, a_2, ..., a_{g_1}\}$ and $\{a_1, a_2, ..., a_{g_1}\}$ determine the same Powell equivalence class.  Since $|a_D \cap a'_1| < |a_1 \cap a'_1|$ (and no other pairs of disks have an increased number of intersection arcs) the inductive hypothesis implies that  $\{a_D, a_2, ..., a_{g_1}\}$ and $\{a_1, a_2, ..., a_{g_1}\}$ determine the same Powell equivalence class. 
\end{proof}

\section{Towards a proof of the Powell Conjecture}

\subsection{The philosophy} \label{sub:philosophy}  For the purposes of this section, let $T_0$ denote a copy of the standard genus $g \geq 2$ Heegaard surface $T_g$ in $S^3$.  Here is the philosophy behind our  (only partially successful) strategy to prove the Powell Conjecture.  Start with an element of the Goeritz group, represented by a path $\tau_\theta: S^3 \to S^3, 0 \leq \theta \leq 2\pi$ in $\Diff(S^3)$ that starts out as the identity and has $\tau_{2\pi}(T_0) = T_0$.  For brevity denote each surface $\tau_\theta(T_0) \subset S^3$ by $T_\theta$. 

We would like to find such a representative $\tau$ so that for some $0 = \theta_0 < \theta_1 < \theta_2 < .... < \theta_n < 2\pi$ 
\begin{enumerate} 
\item for each $\theta \notin \{\theta_i, 1 \leq i \leq n\}$ we can extract information from the surface $T_\theta \subset S^3$ sufficient to determine a Powell equivalence class of trivializations $h_\theta: (S^3, T_\theta) \to (S^3, T_g)$.
\item ensure that the information is unchanged throughout each interval in $[0, 2\pi] - \{\theta_i, 1 \leq i \leq n\}$ so that $h_\theta \tau_\theta: (S^3, T_0) \to (S^3, T_g)$ is a well-defined Powell equivalence throughout each interval
\item for each $1 \leq i \leq n$ ensure that the information for $\theta_i - \epsilon$ gives the same Powell equivalence class as the information for $\theta_i + \epsilon$ so that $h_\theta \tau_\theta: (S^3, T_0) \to (S^3, T_g)$ is a well-defined Powell equivalence class as we move from one interval to the next.  
\end{enumerate}

It would follow then that  for all $0 \leq \theta \leq 2\pi$, $h_\theta \tau_\theta: (S^3, T_0) \to (S^3, T_g)$ are Powell equivalent.  In particular $h_0 \tau_0$ is Powell equivalent to $h_{2\pi} \tau_{2\pi}$.  But since the Powell equivalence class of $h_\theta$ is determined entirely by the surface $T_\theta$, and $T_{2\pi} = T_0$, it follows that we may take $h_{2\pi} = h_0$ so that $\tau_0$ (the identity) is Powell equivalent to $\tau_{2\pi}$, as required.  

Although this is the philosophy, the outcome of our argument is not so neat.  
%Given a Goeritz element $\mathfrak{g}$ we will show that there is a collection of powers $\mathfrak{g}^{i_1}, ..., \mathfrak{g}^{i_k}$  of $\mathfrak{g}$, with the sum of the exponents $ \sum_{j=1}^k i_j = 1$, and for each of them we can extract information. If the information we extract were sufficient to establish that it is represented by a Powell move, this would suffice, since then so would $\mathfrak{g} = \mathfrak{g}^{i_1}...\mathfrak{g}^{i_k}$.    
Sadly, the information we will be able to extract does not rise to the level (as exhibited, say, in Corollary \ref{cor:weakreduce} above) that is sufficient to determine Powell equivalence class, at least as far as we have been able to determine.   (But it does suffice for the case of genus $3$, see Section \ref{sect:genus3} below.)

\subsection{The complex $2C(T)$}

First we describe the topological information that we will extract.  Thinking of $C(T)$ as the curve complex of $T = T_0$, define a 1-complex $2C(T)$ as follows:  the vertices are disjoint ordered pairs of simple closed curves in $T$ (so each corresponds to an edge in $C(T)$, with an orientation). The 1-simplices in $2C(T)$ are of two types: pairs of pairs $((a, b), (a, b^\pr))$ with the property that the curves $\{a, b, b^\pr\}$ are pairwise disjoint and pairs of pairs $((a, b), (a^\pr, b))$ where the curves $(a, a^\pr, b)$ are pairwise disjoint. (So we can think of each edge as a 2-simplex in $C(T)$ in which two of the edges have been oriented to share a head or a tail.)  We speak of ``shuffling" between $(a, b)$ and $(a, b^\pr)$ and between $(a, b)$ and $(a^\pr, b)$. 
%(Not for this paper but later use: a square of l-simplices
%$\arraycolsep = 1.0pt
%\begin{array}{ccc}
%  (a, b)     & \textrm{---} & (a^\pr, b) \\
%  \vline     &              & \vline     \\
%  (a, b^\pr) & \textrm{---} & (a^\pr, b^\pr)
%\end{array}$
%spans a 2-cell iff $\{a, a^\pr, b, b^\pr\}$ are all pairwise disjoint. May add cubes, etc. ...)

Now let $T_\te = \tau_\te(T_0), \te \in S^1$ be a parameterized Heegaard surface in $S^3$ representing an element of the Goeritz group, as described above.  By a \underln{circle $\gamma$ of weak reductions (cwr)}  representing $T_\te$ we mean an edge path $\gamma_t$, $0 \leq t \leq 2\pi$ in $2C(T)$ so that 
\begin{itemize}
\item each vertex on $\gamma_t$ is a pair $(a, b)$ where $a$ compresses in $A$ and $b$ compresses in $B$, 
%\item  $\gamma_0$ is the pair $(c_1, c_1)$ as illustrated in Figure \ref{fig:circlesinT}, and 
\item $\gamma_{2\pi} = \tau_{2\pi}(\gamma_0)$, where $\tau_{2\pi}$ acts via the natural action of $MCG(T)$ on $2C(T)$. 
\end{itemize}

We will show that any such parameterization $\tau_\te$ can be deformed into one that somewhat naturally presents a cwr representing $\tau_\te$.  Specifically, after the deformation, there will be successive values $ \theta_1,  \theta_2,  .... , \theta_n \in (0, 2\pi)$ so that for each $\theta \notin \{\theta_i, 1 \leq i \leq n\}$ there is a pair of weakly reducing disks $(a_\theta, b_\theta)$ associated to the topological surface $T_\theta \subset S^3$ so that
\begin{itemize} 
\item the isotopy class of the pair $(a_\theta, b_\theta)$ is unchanged throughout each interval in $S^1 - \{\theta_i, 1 \leq i \leq n\}$,
\item for each $1 \leq i \leq n$ the pair $(a_{\theta_i + \epsilon}, b_{\theta_i + \epsilon})$ differs from the pair $(a_{\theta_i - \epsilon}, b_{\theta_i - \epsilon})$ by a shuffle % and  
%\item The pair $(a_0, b_0) = (c_1, c_1)$.  
\end{itemize}

Technically, the cwr is then the pull-back of this sequence of disk pairs under the parameterization $(S^3, T_g) \to (S^3, T_\te)$ that defines $T_\te$.  Before we show how to construct the cwr, we describe how a proof of the following conjecture would then lead to a proof of the Powell Conjecture.

\begin{conj} \label{conj:cwr}  There is a method of associating to any vertex $(a, b)$ in $2C(T)$ for which $a$ bounds a disk in $A$ and $b$ bounds a disk in $B$, a Powell equivalence class of homeomorphisms $h_{(a, b)}(S^3, T) \to (S^3, T_g)$ with these properties:
\begin{itemize}
%\item The method is normed.  That is, if $\bdd a = \bdd b$ cuts off a genus $i$ surface from $T$ then $h_{(a, b)}(S^3, T) \to (S^3, T_g)$ sends $\bdd a = \bdd b$ to the curve $c_i \subset T_g$.  (By inductive assumption, this feature determines the Powell equivalence class of $h_{(a, b)}$.)  
\item The method is topological.  That is, for any homeomorphism $\sigma: (S^3, T) \to (S^3, T)$ and vertex $(a, b) \in 2C(T)$, $h_{(\sigma(a), \sigma(b))}\sigma = h_{(a, b)}$ and
\item the Powell equivalence class associated to the ends of any edge in $2C(T)$ are the same.  
\end{itemize} 
\end{conj}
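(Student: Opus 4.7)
The natural approach is to define $h_{(a,b)}$ by first extending the pair $(a,b)$ to a larger disk system where the machinery of Section~\ref{sect:weak} applies. Proceeding inductively on the genus and assuming the Powell Conjecture for all splittings of genus $g' < g$, the plan is: given a vertex $(a,b)$, extend $\{a\}$ and $\{b\}$ to a non-separating weakly-reducing pair $\{a = a_1, a_2, \dots, a_{g_1}\}, \{b = b_{g_1+1}, b_{g_1+2}, \dots, b_g\}$ of disk collections on $T$, and declare $h_{(a,b)}$ to be the Powell equivalence class supplied by Corollary~\ref{cor:weakreduce}. The first technical step is to verify that such an extension always exists for $g \geq 3$: enlarge $\{a\}$ to a complete meridian system for $A$ (possible in any handlebody), swap some of these disks for compressing disks of $B$ containing $b$, and use handle-slide and disk-swap arguments in $A$ and $B$ to arrange that the boundaries on $T$ form a non-separating collection.

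To show independence of extension, I would pass between two extensions $(\mathcal{A}_1, \mathcal{B}_1)$ and $(\mathcal{A}_2, \mathcal{B}_2)$ through an intermediate extension with one side held fixed, appealing to the unnumbered theorem at the end of Section~\ref{sect:weak} and its mirror statement (with the roles of $A$ and $B$ swapped). The wrinkle is that $\mathcal{A}_2$ need not be disjoint from $\mathcal{B}_1$, so a preliminary outermost-arc disk-swapping argument -- in the style of the proofs of Lemmas~\ref{lemma:orthog1} and~\ref{lemma:eyeglass3o} -- is needed to produce a common intermediate $A$-side that is compatible with both $B$-sides (or, dually, a common $B$-side). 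Once independence is in hand, topological naturality is immediate: any homeomorphism $\sigma:(S^3,T) \to (S^3,T)$ sends an extension of $(a,b)$ to an extension of $(\sigma(a),\sigma(b))$, and Corollary~\ref{cor:weakreduce} is defined topologically.

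For edge-invariance, consider a shuffle edge $((a,b),(a,b'))$ with $\{a,b,b'\}$ pairwise disjoint. The plan is to extend $\{a\}$ to a complete non-separating system $\mathcal{A} \subset A$ whose boundaries are disjoint from $\bdd b \cup \bdd b'$, then extend $\mathcal{A} \cup \{b\}$ and $\mathcal{A} \cup \{b'\}$ separately to complete non-separating weakly-reducing pairs $(\mathcal{A}, \mathcal{B})$ and $(\mathcal{A}, \mathcal{B}')$. The mirror of the unnumbered theorem then yields $h_{(a,b)} = h_{(a,b')}$, and shuffle edges of the form $((a,b),(a',b))$ are handled symmetrically. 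The principal obstacle is the extension-plus-independence step: the cautionary note in Section~\ref{sect:compose} warns that naive choices can fail to produce Powell-equivalent homeomorphisms, so the extensions must be made within the rigid framework of non-separating weakly-reducing pairs in order to invoke Corollary~\ref{cor:weakreduce}. Edge cases -- for instance where $\bdd a \cup \bdd b$ separates $T$, where $b$ and $b'$ cannot jointly be extended disjointly from a chosen $\mathcal{A}$, or where small-genus obstructions arise -- will require the most delicate disk-surgery arguments, and are where I expect the main difficulty of the conjecture to lie.
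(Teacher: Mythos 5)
The statement you are attempting to prove is stated in the paper as a \emph{conjecture}, and the paper itself only establishes it in the special case $g = 3$ (Section~\ref{sect:genus3}); in full generality it remains open. Your proposed method — extend $(a,b)$ to a complete non-separating weakly-reducing pair, then invoke Corollary~\ref{cor:weakreduce} — is precisely the naive strategy the authors discuss and explicitly reject in the concluding remarks. There they write that while the Casson--Gordon recipe produces such a collection from $(a,b)$, ``many choices are involved in creating such a collection, and it is not clear that different choices will lead, via Theorem~\ref{thm:cindep}, to the same Powell equivalence class \dots\ even under strong inductive assumptions,'' and that ``we have failed to find such arguments.'' Your independence-of-extension paragraph assumes without proof that a common $\mathcal{A}'$ exists that is simultaneously compatible (forming a complete non-separating weakly-reducing pair) with both $\mathcal{B}_1$ and $\mathcal{B}_2$. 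No such result is available: the two extensions need not even assign the same value of $g_1$, and when they do, the cautionary note at the end of Section~\ref{sect:compose} already shows that passing between different $\mathcal{A}$-collections that are not primitive can fail to be a Powell move. The unnumbered theorem at the end of Section~\ref{sect:weak} only handles the case where the entire $B$-side is held fixed; it does not supply the bridge you need between two essentially unrelated extensions. An ``outermost-arc disk-swapping argument'' does not obviously resolve this, because the issue is not merely making disk systems disjoint but showing that the discrete choices in the Casson--Gordon construction lie in a single Powell class.

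The paper's actual proof for $g = 3$ takes a fundamentally different route. Rather than extending $(a,b)$ to a full weakly-reducing collection, Lemma~\ref{lemma:step1} shows that in genus $3$ at least one of $a$, the surrogate of $a$, $b$, or the surrogate of $b$ is automatically \emph{primitive}, and Corollary~\ref{cor:orthog2} then hands over the Powell class determined by a single primitive disk (using the genus-$2$ case as the inductive base). Lemma~\ref{lemma:step2} then checks edge-invariance directly in this low-genus situation. This sidesteps the extension-independence problem entirely, but relies on a genus-$3$-specific fact (primitivity of one of four disks) that has no analogue in higher genus. You should recognize that your plan is the one the paper identifies as stuck, and that the paper's partial success uses a different mechanism that does not generalize in the way your sketch suggests.
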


Combining Conjecture \ref{conj:cwr} with the construction that precedes it, we get a sequence of homeomorphisms $h_{(a_0, b_0)}, h_{(a_1, b_1)}, ..., h_{(a_n, b_n)}: T \to T_g$ which all have the same Powell equivalence class and for which $(a_n, b_n) = (\tau_{2\pi}(a_0),\tau_{2\pi}(b_0))$.  This implies that $h_{(a_0, b_0)}\tau_{2\pi}$ is Powell equivalent to $h_{(\tau_{2\pi}(a_0),\tau_{2\pi}(b_0))}\tau_{2\pi} = h_{(a_0, b_0)}$ so $\tau_{2\pi}$ is Powell equivalent to the identity, as required.

\subsection{The Rieck background and its refinement}  \label{sub:rieck}

Recall the classical sweep-out technology applicable to any Heegaard splitting of a closed $3$-manifold $M = A \cup_T B$ (see \cite{RS}):  Pick a spine in each handlebody $A$, $B$, that is a $1$-complex $X$ in $A$ (say) whose complement is homeomorphic to $T \times (0, 1)$, with $T \times \{1\}$ corresponding to $\bdd A$.  This gives rise to  a mapping cylinder structure on $A$, $A \cong \de A \times [0,1] /$\small$(a \times 0) \equiv f(a)$\normalsize, some $f: \de A \ra X$.  In its classical application, the mapping cylinder structures on $A$ and $B$ can be combined to parameterize the entire complement of the spines in $M$ as $T \times (-1, 1)$ describing how most of $M$ is swept-out by copies of $T$.

In the case that $M = S^3$ there is another natural sweep-out (actually the genus $0$ version of the sweep-out just described).  Viewing $S^3$ as the standard $3$-sphere in $4$-space, a height function in $R^4$ describes a sweep-out of $S^3$ from south-pole to north-pole.  That is, we can view $S^3$ also as $S^2 \times [-1, 1]$ with $S^2 \times \{0\}$ crushed to the south pole and $S^2 \times \{1\}$ crushed to the north pole.  

In  \cite{R} (based on arguments in \cite{RS}) Rieck proves Waldhausen's theorem by comparing these two ``sweep-outs" of $S^3$ by surfaces, one parameter $s \in [-1, 1]$ for the sweep out by spheres $S$ and one parameter $t \in [-1, 1]$ for the sweep-out by a genus $g \geq 2$ Heegaard surface $T$.  A ``graphic" $\Ggg$ of this $(t, s)$-square is analyzed to find a weak reduction of $T$, that is a pair of compressing disks, one in $A$ and the other in $B$, whose boundaries are disjoint in $T$.  Then \cite{CG} implies that the Heegaard splitting is reducible, and that finishes the proof by induction.  

The graphic consists of open regions $R_i$ where $S_s$ and $T_t$ intersect transversely, edges or ``walls" where the two have a tangency, and cusp points where two types of tangencies cancel. As argued in \cite{RS} only domain walls corresponding to saddle tangencies need to be tracked. Cusps and tangencies of index 2 or 0 can be erased as they amount only to births/deaths of \underln{inessential} simple closed curves of intersection in $S_s \cap T_t$. The most interesting event which occurs are transverse crossings of saddle walls; at this point two independent saddle tangencies occur.  It will be useful to very briefly review Rieck's analysis of this graphic:
%The combinatorics of these events will occupy most of the remainder of this section.

Each region $R_i \subset I \times I \backslash \Ggg$ is labeled by $I$ or $E$: $I$ if every component of $S_s \cap T_t$, $(t, s) \in R_i$ is inessential in $T$, $E$ otherwise. Call an $I$ region \underln{green}, labeled $I_g$, if $T_t \cap (\bigcup_{s^\pr \geq s} S_{s^\pr}) \subset T_t \backslash$disks, that is most of $T_t$ lies north of $S_s$. Similarly we call an $I$ region \underln{yellow}, labeled $I_y$, if $T_t \cap \bigcup_{s^\pr < s} S_{s^\pr}) \subset T_t \backslash$ disks, that is most of $T_t$ lies south of $S_s$. \cite[Lemma/Definition 3.4]{R} asserts that every $I$-region is labeled exactly once as $I_g$ or $I_y$. Furthermore, \cite[Proposition 3.3]{R} states that no two $I$ regions bearing different color labels can touch, even at a corner.  (This is where requiring genus $g \geq 2$ comes in; see for example Lemma \ref{lemma:planar} in the Appendix.)
%Clearly passing directly through a saddle wall cannot change the color label since, by the $I$ labels, all that happens is two inessential simple closed curves join or the reverse. The case of a diagonal contact is more interesting and this is where Rieck needs $g \geq 2$; because of our extra parameter, we will actually need $g \geq 3$. The two step passage below $I_g \ra E \ra I_y$ (see Figure 10) must sweep out a punctured torus if there is to be a color change. But the $I$ label in the final region implies the punctured torus must lie within a toroidal Heegaard surface contradicting $g \geq 2$.

It is clear that for $s$ sufficiently near -1, $(t, s)$ must lie in an $I_g$ region (since most of $T_t$ lies north of $S_s$) and that for $s$ sufficiently near +1, $(t, s)$ must lie in an $I_y$ region.  It follows that between and $I_g$ and $I_y$ region there is an entire strip of $I \times I$, running from the side $t = -1$ to the side $t = 1$, in which each region is labelled $E$.  Moreover, when $t$ is near $-1$, $T_t$ is close to the spine of $A$, so the essential curves bound disks in $A$ and when $t$  is near $+1$, $T_t$ is close to the spine of $B$, so the essential curves bound disks in $B$.  It follows that there must be some region (or two adjacent regions) in such a strip in which both sorts of disks occur, and this provides a weak reduction.  

Rieck's result can be refined.  In the Appendix \ref{appendix} we show that the lowest of the strips that appear in Rieck's argument is actually monotonic in $t$.  By this we mean that there is a function $r: [-1, 1] \to [-1, 1]$ whose graph lies in the $1$-skeleton of the reduced graphic and, for small $\eee$, $\{(t, r(t) + \eee)\} | t \in [-1,1]\}$ lies entirely inside the lowest strip.  To put it another way, the graph of $r + \eee$ in $I \times I$ is transverse to the reduced graphic and, for any $(t, s)$ lying in a region that the graph intersects, among the circles in $S_s \cap T_t$, there is at least one circle that is essential in $T_t$.  This fact has no real importance in Rieck's argument, but its analogue in our context will be quite useful (though not mathematically essential).  

\subsection{Adding the parameter $\te$} \label{sub:theta}

The fundamental idea now will be to add a third parameter to Rieck's proof, namely the parameter $\te$.  We first need to show that the choice of spines in $A$ and $B$ above is unimportant, even in the general context of  a closed manifold $M = A \cup B$.  One can think of a spine for $A$ (or $B$) as a 1-complex $X \subset \text{int } A$ together with a dual cell structure on $A$: a proper disk associated with (and normal to) each 1-simplex of $X$ and a 3-ball containing each vertex of $X$. But such a presentation of the spine involves \underln{no} choice in the sense that its space of parameters is contractible.  The argument for this has two parts:  It follows from \cite{Mc} that the choice of disks defining the spine is unimportant since the disk space of $A$ is contractible.  (Here the disk space is the simplicial complex whose $n$-cells are $n+1$ pairwise disjoint properly embedded disks, which together divide $A$ into balls.) Once these disks are chosen, Hatcher has shown that the exact placement of the disks in $A$, and then the ensuing parameterization of the complementary $3$-balls, also involves no choice. Putting together these results it follows that once a Heegaard surface $T \subset M$ is determined, $M$ is foliated by a family of surfaces $M = \bigcup_{t \in [-1, 1]} T_t$ which degenerate at $t \in \{-1, +1\}$.  Here $T_0$ is of course the original Heegaard surface $T = \de A = \de B$.  Moreover, this foliation is canonical, up to choices coming from a weakly contractible parameter space. The disk complex is contractible and the corresponding function spaces whose simplicial realization is the disk complex is therefore weakly contractible.

Now add the circular parameter $\te \in [0, 2\pi] /$\small$0 \equiv 2\pi$ \normalsize and consider the family of Heegaard surfaces $T_\te \subset S^3$. Choose spines for $A_0$ and $B_0$, propagate them along $\te$ by isotopy until at $2\pi - \epsilon$ we see that the initial and final spines are not matching up. However using just the $\pi_1$-aspect of ''canonical" above we may isotope the spines to match at $2\pi = 0$. This yields a $\te$-parameter family of singular foliations $\ppap$, $(t, \te) \in [-1, 1] \times [0, 2\pi] /$\small$0 \equiv 2\pi \coloneqq I \times S^1$\normalsize.

\begin{thm}  \label{thm:cwr}  
  Let $T_\te$ be a loop of genus $g$ Heegaard surfaces, $g \geq 2$ as described above.  There is a 
  circle $\gamma$ of weak reductions (cwr) representing $T_\te$.
\end{thm}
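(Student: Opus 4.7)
The plan is to enrich the $\theta$-family of sweep-outs from Section~\ref{sub:theta} with the canonical sphere sweep-out of $S^3$ and read off the cwr from the resulting three-parameter picture. First I would couple $T_{t,\theta}$ with the $\theta$-independent sweep-out of $S^3$ by spheres $S_s$, $s \in [-1,1]$, obtaining for each fixed $\theta$ a Rieck graphic $\Gamma_\theta$ in the $(t,s)$-square. By the refinement sketched in Section~\ref{sub:rieck} and proved in the Appendix, we may choose a monotonic arc $r_\theta \subset [-1,1]^2$ whose image, pushed slightly up from the reduced $1$-skeleton of $\Gamma_\theta$, meets only regions in which $S_s \cap T_{t,\theta}$ contains a circle essential in $T_{t,\theta}$. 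Standard Cerf-theoretic transversality, applied to the weakly contractible space of spine choices described in Section~\ref{sub:theta}, then lets us arrange that the combinatorial structure of $\Gamma_\theta$, and of $r_\theta$, is constant off a finite set $\theta_1 < \dots < \theta_n \in S^1$ at which a single codimension-one event occurs.

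Next, for $\theta$ in one of the open arcs $(\theta_{i-1}, \theta_i)$, I would invoke Rieck's argument along $r_\theta$ to extract a region (or two adjacent regions) carrying both an $A$-compressing disk $a$ and a $B$-compressing disk $b$ with $\partial a \cap \partial b = \emptyset$ in $T_{t,\theta}$. Pulling back by the parameterization $(S^3, T_g) \to (S^3, T_\theta)$ converts this into a weak reducing pair for $T_0$, hence a vertex $(a_\theta, b_\theta)$ of $2C(T_0)$ that is locally constant in $\theta$.

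The heart of the argument is the transition analysis at each $\theta_i$. Inessential births/deaths, and $E/I$ label swaps occurring off $r_\theta$, preserve the pair. The essential event is a saddle tangency between $S_s$ and $T_{t,\theta}$ happening along $r_\theta$: after the event, one of the two disks in the pair is replaced by a disk whose boundary is built by banding along the saddle from arcs of the pre-event intersection pattern $S_s \cap T_{t,\theta}$. Since the unchanged element of the pair was read from that same intersection pattern on the opposite side of $S_s$, its boundary is automatically disjoint from the new boundary. Hence $(a_{\theta_i - \epsilon}, b_{\theta_i - \epsilon})$ and $(a_{\theta_i + \epsilon}, b_{\theta_i + \epsilon})$ differ in at most one coordinate and the changed disks have disjoint boundaries in $T_0$---precisely a shuffle edge in $2C(T_0)$. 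Concatenating these edges yields the desired path $\gamma_\theta$. The closure $\gamma_{2\pi} = \tau_{2\pi}(\gamma_0)$ is then free: the whole $T_{t,\theta}$ foliation was arranged in Section~\ref{sub:theta} to close up under $\tau_{2\pi}$, so the data at $\theta = 2\pi$ is the $\tau_{2\pi}$-image of the data at $\theta = 0$.

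The hard part will be step three. One must verify that the generic codimension-one strata in a one-parameter family of Rieck graphics are exactly those enumerated, and in particular that simultaneous ``double'' events forcing both $a$ and $b$ to change at once can be perturbed into separate single events; and one must confirm event-by-event that the replacement disk genuinely lies in the pre-event intersection pattern, so that disjointness from the unchanged partner follows automatically rather than requiring extra work. This is a delicate local picture analysis but needs no new global input beyond the transversality already underlying Rieck's original argument.
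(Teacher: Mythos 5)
Your setup is the same as the paper's: couple the $\theta$-family of Heegaard sweep-outs with the sphere sweep-out, use the monotonic function $r$ from the Appendix to get an annular slice $\Sigma$ of the three-parameter graphic, and track a weakly reducing pair as $\theta$ varies. The gap is in step three, exactly where you flagged the ``hard part,'' and it breaks in the opposite direction from what you predicted. You assert that after a saddle event ``the unchanged element of the pair was read from that same intersection pattern on the opposite side of $S_s$, its boundary is automatically disjoint from the new boundary,'' so each transition is a single shuffle edge. This is false at a saddle-wall crossing. The local pattern $\xi$ of two simultaneous saddle tangencies is a subset of both $S$ and $T$, and while its four resolutions are pairwise disjoint in $S$, the surface neighborhoods $(\mathcal{N}_S(\xi),\xi)$ and $(\mathcal{N}_T(\xi),\xi)$ need not be homeomorphic: the paper's Figure~\ref{fig:toralsaddle} (a sloping plane slicing a torus of revolution) gives two curves with one $+$ and one $-$ intersection in the plane but two $+$ intersections on the torus. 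The upshot, recorded in Figure~\ref{fig:resolved}, is that one diagonal pair of quadrants around the crossing can carry resolutions that genuinely intersect in $T$ --- the ``dangerous diagonal.'' When the $1$-manifold $M$ separating $A$- from $B$-regions passes a vertex of type (ii), (iii), or (iv), moving from the pre-event pair to the post-event pair forces you across that diagonal, and the new disk's boundary is \emph{not} disjoint from the unchanged partner's.

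The missing idea is what the paper calls the lantern trick. When $a$ and $a'$ (or $b$ and $b''$) intersect along a dangerous diagonal, the relevant picture after an innermost-circle reduction is two essential crossing curves in a $4$-punctured sphere all of whose boundary circles compress in $A$ (Figure~\ref{fig:2crossings}); the ``two-of-three'' Lemma~\ref{lemma:2of3} guarantees at least one boundary component $\delta$ is essential in $T$, and $(a,b)\to(\delta,b)\to(a',b)$ is the admissible path, with the intermediate vertex lying in \emph{neither} cloud. So the cwr is not built by pure shuffles between the pairs you read off $\Sigma$: you must insert these lantern detours. You would also need the paper's careful case split of vertex types (i)--(iv) against the local $\xi$-patterns (1)--(5), including the argument that pattern (5) cannot occur in case (iv), to see that the lantern trick actually covers every problematic configuration. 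Without this you cannot conclude that consecutive pairs in your list are joined by edges of $2C(T)$, which is the entire content of the theorem.
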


\begin{proof}  We study the 3-parameter family $\ppap \cap S_s$, $(t, s, \te) \in [-1, 1] \times [-1, 1] \times [0, 2\pi] /$\small$0 \equiv 2\pi \coloneqq [-1, 1] \times [-1, 1] \times S^1$\normalsize. These intersections may be regarded as the level set at height = $s$ of a 2-parameter family of functions on $T$. All told this means we are looking for a 3-parameter family of germs of smooth functions $(R^2, 0) \ra (R, 0)$. According to Thom's Jet-transversality Theorem any such family can be perturbed a generic one with local simulation types as listed below. In these dimensions the generic local germs \footnote{All other germ types such as hyperbolic umbilic, $f(x, y) = x^3 + y^3$, have codimensions $\geq 4$ and need not be considered.} \cite{HW} are represented by:

\begin{table}[H]
  \centering
  \setlength{\tabcolsep}{12pt}
  \begin{tabular}{lllll}
    1.                & $f(x, y) = x$         &                                     & codim = 0 \\
    2a.               & $f(x, y) = x^2 + y^2$ & source / sink                       & codim = 1 \\
    \hspace{.25em} b. & $f(x, y) = x^2 - y^2$ & saddle                              & codim = 1 \\
    3a.               & $f(x, y) = x^3 + y^2$ & birth/death of $(1, 2)$-handle pair & codim = 2 \\
    \hspace{.25em} b. & $f(x, y) = x^3 - y^2$ & birth/death of $(1, 2)$-handle pair & codim = 2 \\
    4a.               & $f(x, y) = x^4 + y^2$ & dovetail singularity                & codim = 3 \\
    \hspace{.25em} b. & $f(x, y) = x^4 - y^2$ & dovetail singularity                & codim = 3 \\
    \rule{0pt}{2.5ex} & & ( \begin{tikzpicture}
      \draw (-0.5,0.5) to [out=0] (0.1,0.2);
      \draw (-0.3,0.2) to [in=180] (0.3,0.5);
      \draw (-0.3,0.2) to (0.1,0.2);
      \draw [->] (0.4,0.3) -- (0.7,0.3);
      \draw (0.8,0.5) to [out=0] (1.2,0.2);
      \draw (1.2,0.2) to [in=180] (1.6,0.5);
      \draw [->] (1.7,0.3) -- (2,0.3);
      \draw (2.1,0.5) -- (2.3,0.4);
      \draw (2.7,0.4) -- (2.9,0.5);
      \draw (2.3,0.4) to [out=-60,in=180] (2.5,0.2) to [out=0,in=240] (2.7,0.4);
    \end{tikzpicture} ) & \\
  \end{tabular}
\end{table}

These are the germs. We also may invoke genericity to ensure that where a single function on $T$ restricts to singular germs on several different points of $T$, the local unfoldings are transverse. These is no difficulty making this precise as the stratification of the unfoldings obey the two Whitney conditions \cite{W} are are in fact piecewise smooth submanifolds of the parameter space.

So what does the singular set of a \underln{graphic} in $[-1, 1] \times [-1, 1] \times S^1$ look like? It consists of 2D-manifold sheets of two types - saddle and sink - meeting along birth/death 1D strata, which are allowed to have dovetail singularities at isolated points. The 1D and 2D strata may pass transversely through each other at points and the 2D strata may have transverse 1-manifolds of intersection and isolated standard triple points.

The picture we must study is much simpler. Just as in \cite{RS}, the 2D sheets labeled by source/sink tangencies may be discarded forming the \underln{reduced graphic}, as crossing these walls only changes the intersection $T \cap S$ by inessential simple closed curves. The remaining saddle-labeled walls (it does not matter for connectivities, but for convenience take their closures) divide $[-1, 1] \times [-1, 1] \times S^1$ into complementary open regions $\{R_i\}$, each of which has a constant topological pattern $T \cap S$, up to birth and death of inessential sccs. Thus the 3D reduced graphic $G_3$ is a straightforward generalization of the 2D one: 2-manifold sheets (perhaps with borders) crossing in double curves and triple points.

\subsection{The graphic on an annular surface} \label{sub:annular}

After having defined the 3D reduced graphic, we now replace it with a 2D graphic by restricting it to the sub-surface $\Sigma \subset [-1, 1] \times [-1, 1] \times S^1$, where $\Sigma$ is the graph of the function described in Proposition \ref{prop:graphsum}.  Its structure as a graph provides $\Sigma$ with a natural diffeomorphism to an annulus $[-1, 1] \times S^1$, parameterized by $(t, \te)$, and that is how we will view it. This structure as a graph allows us, when thinking of the intersection $\ppap \cap S_s$ that corresponds to a point in $\Sigma$, to take the sphere as fixed, so henceforth we drop the subscript $s$.   (When there is little risk of confusion, we will also drop the subscript on $T$.)  Each region of the graphic in $\Sigma$ corresponds to a transverse intersection of $\ppap$ with $S$ in which some of the curves of intersection are essential in $\ppap$ and bound disks in $A_{t, \te}$ or $B_{t, \te}$, perhaps both.  For regions near $t = -1$, all such disks lie in $A_{t, \te}$ (since $T$ is near the spine of $A_{t, \te}$, which we may take transverse to the height function $s$) and we label these regions $A$.  Similarly, near $t = +1$, all such disks lie in $B_{t, \te}$ and we label such regions $B$.  

Let us establish the following labeling and ``artistic" convention for the general region $R$ of our annular graphic.  If $\ppap \cap S$ contains a scc $a$ essential in $\ppap$ and compressing in the handlebody $A_{t, \te}$ label the region containing $(t, \te)$ by $A$. (Do this regardless of whether $\ppap \cap S$ contains an essential scc compressing in $B_{t, \te}$). If $\ppap \cap S$ does not contain any essential scc compressing in $A_{t, \te}$, but does contain a scc compressing in $B$, label that Region $B$.

\underln{Artistic convention}: If labels $A$ and $B$ alternate around a saddle wall double point render the boundary between $A$-regions and $B$-regions to be a 1-manifold favoring $A$. We label this 1-manifold $M$, it separates $A$ regions from $B$ regions. We recall both this labeling and rendering convention with the phrase: ``favor $A$."
 
 Since near one boundary component of  the annulus $\Sigma$ the regions are all labelled $A$ and at the other they are all labelled $B$, there is at least one component of $M$ that is essential in $\Sigma$.  Such a component will be homotopic in $\Sigma$, fixing a point, to a core curve $\{pt\} \times S^1 \subset [-1, 1] \times S^1$.  Here then is the plan for the proof of Theorem \ref{thm:cwr}.  We will show
\begin{itemize}
\item Each component of $M$ in $\Sigma$ naturally describes an edge-path in $2C(T)$ through vertices $(a, b)$ in which the first curve bounds an essential disk in $A$ and the second an essential disk in $B$
\item For any component of $M$ that is essential in $\Sigma$ the associated edge-path is a cwr representing $\tau$.
\end{itemize}

\begin{figure}[ht]
  \centering
  \begin{tikzpicture}
    \draw (0, 0) -- (4, 4);
    \draw (0, 4) -- (4, 0);
    \node at (2, 0.75) {\large A};
    \node at (0.5, 2) {\large B};
    \node at (3.5, 2) {\large B};
    \node at (2, 3.25) {\large A};

    \node at (6, 2) {\Huge $\longrightarrow$};
    \node at (5.93, 2.35) {\small render};

    \draw (8, 0) -- (9.4, 1.4);
    \draw (8, 4) -- (9.4, 2.6);
    \draw (9.4, 1.4) to [out = 45, in = 315] (9.4, 2.6);
    \draw (12, 0) -- (10.6, 1.4);
    \draw (12, 4) -- (10.6, 2.6);
    \draw (10.6, 1.4) to [out = 135, in = 225] (10.6, 2.6);
    \node at (10, 0.75) {\large A};
    \node at (8.5, 2) {\large B};
    \node at (11.5, 2) {\large B};
    \node at (10, 3.25) {\large A};
  \end{tikzpicture}
  \caption{}
\end{figure}

The crux is to understand the intersection locus where two saddle walls cross, what the four possible resolutions of the two saddles look like, and which curves among these resolution could be labeled $a$ or $b$. (Again a curve is labeled $a$ (resp $b$) if it is essential in $T$ and compresses in $A$ (resp $B$).)

The singular pattern $\xi$ associated with a saddle wall crossing is present simultaneously in the sphere $S$ and the surface $T$. Thus the pattern $\xi$ as a 4-valent graph is the same in $S$ and $T$. Furthermore the tangential information is also preserved so at a saddle tangency we know which legs are opposite (this is not quite the information of a cyclic ordering; it might be called a "dihedral ordering" as the information is a coset of $S(4) / D(4)$). But interestingly, it is \underln{not} true that the pairs $(\mathcal{N}_S(\xi), \xi) \subset S$ and $(\mathcal{N}_T(\xi), \xi) \subset T$ are necessarily homeomorphic.  (Here $\mathcal{N}_S$ and $\mathcal{N}_T$ denote neighborhood in $S$ and $T$ respectively.) The important example is Figure \ref{fig:toralsaddle}; it is a torus of revolution intersected with a sloping plane to produce two circles of intersection. In the plane they have one positive and one negative intersection, whereas in the torus they have two positive intersections.

\begin{figure}[H]
  \centering
  \begin{tikzpicture}
    \draw  (0,0) ellipse (3.5 and 1.5); %outer ellipse
    \draw (-0.8,0.1) arc (180:360:0.8 and 0.2); %inner bottom
    \draw [dashed] (-0.73,0.02) arc (180:0:0.73 and 0.15); %inner top (dashed)

    \draw (0.06,-0.16) to [out=-45,in=225] (1.8,0);
    \draw (0.061, -0.161) to [out=135,in=0] (0, -0.1);
    \draw (1.8,0) to [out=45,in=10] (-1.8,1.29);
    \draw [dashed] (-0.075,-0.175) to [out=225,in=-45] (-1.8,0);
    \draw [dashed] (-0.061, -0.161) to [out=45,in=180] (0, -0.1);
    \draw [dashed] (-1.8,0) to [out=135,in=195] (-1.8,1.29);

    \draw (0.06,0.22) to [out=45,in=135] (1.8,0);
    \draw (0.06,0.22) to [out=225,in=0] (0, 0.165);
    \draw (1.8,0) to [out=-45,in=-10] (-1.8,-1.29);
    \draw [dashed] (-0.09,0.25) to [out=135,in=45] (-1.8,0);
    \draw [dashed] (-0.06,0.22) to [out=-45,in=180] (0,0.165);
    \draw [dashed] (-1.8,0) to [out=225,in=165] (-1.8,-1.29);

    \node at (2.7,0) {\footnotesize $\xi\subset$ Torus};
    \node at (0,-2) {(plane not rendered)};
  \end{tikzpicture}
  \caption{} \label{fig:toralsaddle}
\end{figure}

For our analysis we need to list all possible \underln{pairs} $(\mathcal{N}(\xi), \xi)$ up to homeomorphism. Let us start by enumerating the possibilities on $S$.  There are 4:

\begin{figure}[ht]
  \centering
  \begin{tikzpicture}
    \draw (-3,0) -- (-3,-1.5) -- (-0.5,-1.5) -- (-0.5,0) -- cycle;
    \draw (-4.5,0) -- (-4.5,-1.5) -- (-6,-1.5) -- (-6,0) -- cycle;
    \draw (1,0) -- (3.5,0) -- (3.5,-1.5) -- (1,-1.5) -- cycle;
    \draw (5, 0) -- (7.5,0) -- (7.5,-1.5) -- (5,-1.5) -- cycle;
    %Box 4
    \draw  (5.8,-0.75) ellipse (0.6 and 0.4);
    \draw  (6.7,-0.75) ellipse (0.6 and 0.4);
    %Box 3
    \draw (1.6,-1.2) -- (2.9,-1.2);
    \draw (1.6,-1.2) to [out=180,in=180] (1.6,-0.6);
    \draw (2.9,-1.2) to [out=0,in=0] (2.9,-0.6);
    \draw (2.9,-0.6) -- (2.59,-0.6);
    \draw (2.6,-0.6) to [out=180,in=90] (2.4,-0.75);
    \draw (2.4,-0.74) to [out=270,in=180] (2.505,-0.85);
    \draw (2.5,-0.85) to [out=0,in=270] (2.6,-0.74);
    \draw (2.6,-0.75) to [out=90,in=0] (2.4,-0.6);
    \draw (1.9,-0.6) -- (1.6,-0.6);
    \draw (1.9,-0.6) to [out=0,in=270] (2.1,-0.45);
    \draw (2.1,-0.45) to [out=90,in=0] (2,-0.35);
    \draw (2,-0.35) to [out=180,in=90] (1.9,-0.45);
    \draw (1.9,-0.45) to [out=270,in=180] (2.1,-0.6);
    \draw (2.1,-0.6) -- (2.4,-0.6);
    %Box 2
    \draw (-2.4,-1.2) -- (-1.1,-1.2);
    \draw (-2.4,-1.2) to [out=180,in=180] (-2.4,-0.6);
    \draw (-1.1,-1.2) to [out=0,in=0] (-1.1,-0.6);
    \draw (-1.1,-0.6) -- (-1.4,-0.6);
    \draw (-1.6,-0.6) -- (-1.9,-0.6);
    \draw (-2.1,-0.6) to [out=0,in=270] (-1.9,-0.45);
    \draw (-1.9,-0.45) to [out=90,in=0] (-2,-0.35);
    \draw (-2,-0.35) to [out=180,in=90] (-2.1,-0.45);
    \draw (-2.1,-0.45) to [out=270,in=180] (-1.9,-0.6);
    \draw (-1.6,-0.6) to [out=0,in=270] (-1.4,-0.45);
    \draw (-1.4,-0.45) to [out=90,in=0] (-1.5,-0.35);
    \draw (-1.5,-0.35) to [out=180,in=90] (-1.6,-0.45);
    \draw (-1.6,-0.45) to [out=270,in=180] (-1.4,-0.6);
    \draw (-2.4,-0.6) -- (-2.1,-0.6);
    %Box 1
    \draw (-5.5,-0.25) to [out=180,in=90] (-5.75,-0.45);
    \draw (-5.5,-0.65) to [out=180,in=270] (-5.75,-0.45);
    \draw (-5.5,-0.25) to [out=0,in=115] (-5.25,-0.45);
    \draw (-5.5,-0.65) to [out=0,in=245] (-5.25,-0.45);
    \draw (-5.25,-0.45) to [out=65,in=180] (-5,-0.25);
    \draw (-5.25,-0.45) to [out=-65,in=180] (-5,-0.65);
    \draw (-5,-0.65) to [out=0,in=270] (-4.75,-0.45);
    \draw (-5,-0.25) to [out=0,in=90] (-4.75,-0.45);

    \draw (-5.5,-0.85) to [out=180,in=90] (-5.75,-1.05);
    \draw (-5.5,-1.25) to [out=180,in=270] (-5.75,-1.05);
    \draw (-5.5,-0.85) to [out=0,in=115] (-5.25,-1.05);
    \draw (-5.5,-1.25) to [out=0,in=245] (-5.25,-1.05);
    \draw (-5.25,-1.05) to [out=65,in=180] (-5,-0.85);
    \draw (-5.25,-1.05) to [out=-65,in=180] (-5,-1.25);
    \draw (-5,-1.25) to [out=0,in=270] (-4.75,-1.05);
    \draw (-5,-0.85) to [out=0,in=90] (-4.75,-1.05);

    \node at (-5.2,-1.9) {1};
    \node at (-1.7,-1.9) {2};
    \node at (2.3,-1.9) {3};
    \node at (6.3,-1.9) {4};
  \end{tikzpicture}
  \caption{}
\end{figure}

These four possible $\xi$s can yield neighborhood pairs on $T$ as shown in Figure \ref{fig:xiT}.  (The small circles in the figures represent boundary components of the neighborhood, typically essential circles in $T$.)

\begin{figure}[H]
  \centering
  \begin{tikzpicture}
    \draw (-6,1.5) -- (-6,0) -- (-3.5,0) -- (-3.5,1.5) -- cycle;
    \draw (-4.5,4) -- (-4.5,2.5) -- (-6,2.5) -- (-6,4) -- cycle;
    \draw (-2,1.5) -- (0.5,1.5) -- (0.5,0) -- (-2,0) -- cycle;
    \draw (-6,-3.75) -- (-3.5,-3.75) -- (-3.5,-5.25) -- (-6,-5.25) -- cycle;
    %Box 4.1
    \draw  (-5.2,-4.5) ellipse (0.6 and 0.4);
    \draw  (-4.3,-4.5) ellipse (0.6 and 0.4);
    %Box 2.2
    \draw (-1.4,0.3) -- (-0.1,0.3);
    \draw (-1.4,0.3) to [out=180,in=180] (-1.4,0.9);
    \draw (-0.1,0.3) to [out=0,in=0] (-0.1,0.9);
    \draw (-0.1,0.9) -- (-0.41,0.9);
    \draw (-0.4,0.9) to [out=180,in=90] (-0.6,0.75);
    \draw (-0.6,0.76) to [out=270,in=180] (-0.495,0.65);
    \draw (-0.5,0.65) to [out=0,in=270] (-0.4,0.76);
    \draw (-0.4,0.75) to [out=90,in=0] (-0.6,0.9);
    \draw (-1.1,0.9) -- (-1.4,0.9);
    \draw (-1.1,0.9) to [out=0,in=270] (-0.9,1.05);
    \draw (-0.9,1.05) to [out=90,in=0] (-1,1.15);
    \draw (-1,1.15) to [out=180,in=90] (-1.1,1.05);
    \draw (-1.1,1.05) to [out=270,in=180] (-0.9,0.9);
    \draw (-0.9,0.9) -- (-0.6,0.9);
    %Box 3.1
    \draw (-6,-1) -- (-3.5,-1) -- (-3.5,-2.5) -- (-6,-2.5) -- cycle;
    \draw (-5.4,-2.2) -- (-4.1,-2.2);
    \draw (-5.4,-2.2) to [out=180,in=180] (-5.4,-1.6);
    \draw (-4.1,-2.2) to [out=0,in=0] (-4.1,-1.6);
    \draw (-4.1,-1.6) -- (-4.41,-1.6);
    \draw (-4.4,-1.6) to [out=180,in=90] (-4.6,-1.75);
    \draw (-4.6,-1.74) to [out=270,in=180] (-4.495,-1.85);
    \draw (-4.5,-1.85) to [out=0,in=270] (-4.4,-1.74);
    \draw (-4.4,-1.75) to [out=90,in=0] (-4.6,-1.6);
    \draw (-5.1,-1.6) -- (-5.4,-1.6);
    \draw (-5.1,-1.6) to [out=0,in=270] (-4.9,-1.45);
    \draw (-4.9,-1.45) to [out=90,in=0] (-5,-1.35);
    \draw (-5,-1.35) to [out=180,in=90] (-5.1,-1.45);
    \draw (-5.1,-1.45) to [out=270,in=180] (-4.9,-1.6);
    \draw (-4.9,-1.6) -- (-4.6,-1.6);
    %Box 2.1
    \draw (-5.4,0.3) -- (-4.1,0.3);
    \draw (-5.4,0.3) to [out=180,in=180] (-5.4,0.9);
    \draw (-4.1,0.3) to [out=0,in=0] (-4.1,0.9);
    \draw (-4.1,0.9) -- (-4.4,0.9);
    \draw (-4.6,0.9) -- (-4.9,0.9);
    \draw (-5.1,0.9) to [out=0,in=270] (-4.9,1.05);
    \draw (-4.9,1.05) to [out=90,in=0] (-5,1.15);
    \draw (-5,1.15) to [out=180,in=90] (-5.1,1.05);
    \draw (-5.1,1.05) to [out=270,in=180] (-4.9,0.9);
    \draw (-4.6,0.9) to [out=0,in=270] (-4.4,1.05);
    \draw (-4.4,1.05) to [out=90,in=0] (-4.5,1.15);
    \draw (-4.5,1.15) to [out=180,in=90] (-4.6,1.05);
    \draw (-4.6,1.05) to [out=270,in=180] (-4.4,0.9);
    \draw (-5.4,0.9) -- (-5.1,0.9);
    %Box 3.2
    \draw (-2,-1) -- (-2,-2.5) -- (0.5,-2.5) -- (0.5,-1) -- cycle;
    \draw (-1.4,-2.2) -- (-0.1,-2.2);
    \draw (-1.4,-2.2) to [out=180,in=180] (-1.4,-1.6);
    \draw (-0.1,-2.2) to [out=0,in=0] (-0.1,-1.6);
    \draw (-0.1,-1.6) -- (-0.4,-1.6);
    \draw (-0.6,-1.6) -- (-0.9,-1.6);
    \draw (-1.1,-1.6) to [out=0,in=270] (-0.9,-1.45);
    \draw (-0.9,-1.45) to [out=90,in=0] (-1,-1.35);
    \draw (-1,-1.35) to [out=180,in=90] (-1.1,-1.45);
    \draw (-1.1,-1.45) to [out=270,in=180] (-0.9,-1.6);
    \draw (-0.6,-1.6) to [out=0,in=270] (-0.4,-1.45);
    \draw (-0.4,-1.45) to [out=90,in=0] (-0.5,-1.35);
    \draw (-0.5,-1.35) to [out=180,in=90] (-0.6,-1.45);
    \draw (-0.6,-1.45) to [out=270,in=180] (-0.4,-1.6);
    \draw (-1.4,-1.6) -- (-1.1,-1.6);
    %Box 1
    \draw (-5.5,3.75) to [out=180,in=90] (-5.75,3.55);
    \draw (-5.5,3.35) to [out=180,in=270] (-5.75,3.55);
    \draw (-5.5,3.75) to [out=0,in=115] (-5.25,3.55);
    \draw (-5.5,3.35) to [out=0,in=245] (-5.25,3.55);
    \draw (-5.25,3.55) to [out=65,in=180] (-5,3.75);
    \draw (-5.25,3.55) to [out=-65,in=180] (-5,3.35);
    \draw (-5,3.35) to [out=0,in=270] (-4.75,3.55);
    \draw (-5,3.75) to [out=0,in=90] (-4.75,3.55);

    \draw (-5.5,3.15) to [out=180,in=90] (-5.75,2.95);
    \draw (-5.5,2.75) to [out=180,in=270] (-5.75,2.95);
    \draw (-5.5,3.15) to [out=0,in=115] (-5.25,2.95);
    \draw (-5.5,2.75) to [out=0,in=245] (-5.25,2.95);
    \draw (-5.25,2.95) to [out=65,in=180] (-5,3.15);
    \draw (-5.25,2.95) to [out=-65,in=180] (-5,2.75);
    \draw (-5,2.75) to [out=0,in=270] (-4.75,2.95);
    \draw (-5,3.15) to [out=0,in=90] (-4.75,2.95);

    %Box 4.2
    \draw (-1.6,-4) arc (138:42:1.75 and 0.75); %outer ellipse top
    %\draw  (0,0) ellipse (1.75 and 0.75); %outer ellipse full
    \draw (-1.6,-5) arc (222:318:1.75 and 0.75); %outer ellipse bottom
    \draw (-0.7,-4.45) arc (180:360:0.4 and 0.1); %inner bottom
    \draw [dashed] (-0.665,-4.49) arc (180:0:0.365 and 0.075); %inner top (dashed)

    \draw (-0.27,-4.58) to [out=-45,in=225] (0.6,-4.5);
    \draw (-0.2695,-4.5805) to [out=135,in=0] (-0.3,-4.55);
    \draw (0.6,-4.5) to [out=45,in=10] (-1.2,-3.855);
    \draw [dashed] (-0.3375,-4.5875) to [out=225,in=-45] (-1.2,-4.5);
    \draw [dashed] (-0.3305,-4.5805) to [out=45,in=180] (-0.3,-4.55);
    \draw [dashed] (-1.2,-4.5) to [out=135,in=195] (-1.2,-3.855);

    \draw (-0.27,-4.39) to [out=45,in=135] (0.6,-4.5);
    \draw (-0.27,-4.39) to [out=225,in=0] (-0.3,-4.4175);
    \draw (0.6,-4.5) to [out=-45,in=-10] (-1.2,-5.145);
    \draw [dashed] (-0.345,-4.375) to [out=135,in=45] (-1.2,-4.5);
    \draw [dashed] (-0.33,-4.39) to [out=-45,in=180] (-0.3,-4.4175);
    \draw [dashed] (-1.2,-4.5) to [out=225,in=165] (-1.2,-5.145);

    \draw  (-1.6,-4.5) ellipse (0.1 and 0.5); %right ellipse
    \draw  (1,-4.5) ellipse (0.1 and 0.5); %left ellipse
    \draw  (-2,-3.5) rectangle (1.5,-5.5);

    %Extraneous stuff
    \draw  (-5.5,3.54) ellipse (0.07 and 0.05);
    \draw  (-4.98,3.54) ellipse (0.07 and 0.05);
    \draw  (-5.5,2.94) ellipse (0.07 and 0.05);
    \draw  (-4.98,2.94) ellipse (0.07 and 0.05);

    \draw  (-5,1.04) ellipse (0.03 and 0.04);
    \draw  (-4.5,1.04) ellipse (0.03 and 0.04);
    \draw  (-4.75,0.6) ellipse (0.4 and 0.1);

    \draw  (-1,-1.47) ellipse (0.03 and 0.04);
    \draw  (-0.5,-1.47) ellipse (0.03 and 0.04);
    \draw  (-0.75,-1.9) ellipse (0.4 and 0.1);

    \draw  (-5,-1.46) ellipse (0.03 and 0.04);
    \draw  (-4.5,-1.74) ellipse (0.03 and 0.04);
    \draw  (-4.75,-2) ellipse (0.4 and 0.1);

    \draw  (-1,1.04) ellipse (0.03 and 0.04);
    \draw  (-0.5,0.76) ellipse (0.03 and 0.04);
    \draw  (-0.75,0.5) ellipse (0.4 and 0.1);

    \draw  (-5.3,-4.5) ellipse (0.25 and 0.15);
    \draw  (-4.2,-4.5) ellipse (0.25 and 0.15);
    \draw  (-4.75,-4.5) ellipse (0.05 and 0.07);

    \draw [thick, ->] (-8.4,3.2) -- (-6.8,3.2);
    \draw [thick, ->] (-8.4,0.7) -- (-6.8,0.7);
    \draw [thick, ->] (-8.4,-1.8) -- (-6.8,-1.8);
    \draw [thick, ->] (-8.4,-4.5) -- (-6.8,-4.5);
    \node at (-9.2,3.2) {1};
    \node at (-9.1,0.7) {2};
    \node at (-9.2,-1.8) {3};
    \node at (-9.2,-4.5) {4};
    \node at (-2.7,0.7) {or};
    \node at (-2.7,-1.8) {or};
    \node at (-2.7,-4.5) {or};
    \node at (-2.6,3.2) {(unique)};
    \node at (-0.3,-6) {Case 5};
  \end{tikzpicture}
  \caption{}  \label{fig:xiT}
\end{figure}

So, overall, on $T$ we have 5 cases to consider: 1, 2, 3, 4, and 5 above.

Figure \ref{fig:resolved} displays the four local resolutions within $\mathcal{N}(\xi)$ of $\xi$ in each of the 5 cases.

\begin{figure}[ht]
  \centering
  \begin{tikzpicture}
    %Top Middle
    \draw (0,1)--(0,5);
    \draw (-2,3) -- (2,3);
    \draw  (-1.5,4.25) circle (0.1);
    \draw  (-0.75,4.25) circle (0.1);
    \draw  (-1.125,3.75) ellipse (0.5 and 0.125);
    \draw  (1.6,4.4) circle (0.1);
    \draw [rounded corners] (0.75,4.5) -- (0.75,3.5) -- (1.75,3.5);
    \draw [rounded corners] (1,4.5) -- (1,3.7) -- (1.75,3.7);
    \draw (0.75,4.5) to [out=90,in=90] (1, 4.5);
    \draw (1.75,3.7) to [out=0, in=0] (1.75, 3.5);
    \draw  (-1.4,1.7) circle (0.1);
    \draw [rounded corners] (-0.6,1.6) -- (-0.6,2.6) -- (-1.6,2.6);
    \draw [rounded corners] (-1.6,2.4) -- (-0.8,2.4) -- (-0.8,1.6);
    \draw (-1.6,2.6) to [out=180,in=180] (-1.6,2.4);
    \draw (-0.6,1.6) to [out=270,in=270] (-0.8,1.6);
    \draw [rounded corners] (0.4,2.6)  -- (0.4,1.6) -- (1.8,1.6) -- (1.8,2.6) -- (1.5,2.6) -- (1.5,1.9) -- (0.7,1.9) -- (0.7,2.6) -- cycle;
    \draw [<->, thick] (-0.3,2.7) -- (0.3,3.3);

    %Top Right
    \draw (3.5,3)--(7.5,3);
    \draw (5.5,1)--(5.5,5);
    \draw [rounded corners] (3.7,2.1) -- (3.7,1.6) -- (5.1,1.6) -- (5.1,2.3) -- (4.8,2.3) -- (4.8,1.8) -- (4.56,1.8) -- (4.56,2.1) -- cycle;
    \draw  (4,2.4) circle (0.1);
    \draw [rounded corners] (4.9,3.5) -- (3.7,3.5) -- (3.7,4.1) -- (4.9,4.1);
    \draw (4.9,3.5) to [out=0,in=270] (5.15,3.8);
    \draw (5.15,3.8) to [out=90,in=0] (4.9,4.1);
    \draw  (4.9,3.9) circle (0.1);
    \draw  (3.9,4.4) circle (0.1);
    \draw [rounded corners] (5.9,4.5)  -- (5.9,3.5) -- (7.3,3.5) -- (7.3,3.9) -- (6.3,3.9) -- (6.3,4.5) -- cycle;
    \draw  (7.1,3.7) circle (0.07);
    \draw [rounded corners] (5.9,2.1) -- (5.9,1.6) -- (7.3,1.6) -- (7.3,2.1) -- (7,2.1) -- (7,1.85) -- (6.65,1.85) -- (6.65,2.1) -- (6.38,2.1) -- (6.38,2.4) -- (6.15,2.4) -- (6.15,2.1) -- cycle;
    \draw [<->, thick] (5.1,3.4) -- (5.8,2.7);

    %Bottom Left
    \draw (-3.5,3)--(-7.5,3);
    \draw (-5.5,1) -- (-5.5,5);
    \draw (-0.75,-1.5)--(-4.75,-1.5);
    \draw (-2.75,-3.5) -- (-2.75,0.5);
    \draw (-4.5,-0.25) to [out=180,in=180] (-4.5,-1.05);
    \draw (-4.5,-1.05) to [out=0,in=180] (-3.9,-0.85);
    \draw (-3.9,-0.85) to [out=0,in=180] (-3.3,-1.05);
    \draw (-3.3,-1.05) to [out=0,in=0] (-3.3,-0.25);
    \draw (-3.3,-0.25) to [out=180,in=0] (-3.9,-0.45);
    \draw (-3.9,-0.45) to [out=180,in=0] (-4.5,-0.25);
    \draw  (-3.9,-0.65) circle (0.07);
    \draw [rounded corners] (-4.55,-2) -- (-4.55,-3) -- (-3.15,-3) -- (-3.15,-2) -- (-3.45,-2) -- (-3.45,-2.7) -- (-4.25,-2.7) -- (-4.25,-2) -- cycle;
    \draw [rounded corners] (-2.3,0) -- (-2.3,-1) -- (-2,-1) -- (-2,-0.3) -- (-1.3,-0.3) -- (-1.3,-1) -- (-1,-1) -- (-1,0) -- cycle;
    \draw (-2.15,-1.9) to [out=225,in=135] (-2.15,-2.9) to [out=-45,in=-60] (-1.85,-2.8) to [out=120,in=225] (-1.85,-2.1) to [out=45,in=45] (-2.15,-1.9);
    \draw (-1.15,-1.9) to [out=-45,in=45] (-1.15,-2.9) to [out=225,in=240] (-1.45,-2.8) to [out=60,in=-60] (-1.45,-2.1) to [out=135,in=135] (-1.15,-1.9);
    \draw [<->,thick] (-3.05,-1.8) -- (-2.45,-1.2);
    \draw  (-6.9,4.3) circle (0.1);
    \draw  (-6.3,4.3) circle (0.1);
    \draw  (-6.9,3.8) circle (0.1);
    \draw  (-6.3,3.8) circle (0.1);
    \draw  (-4.9,4.3) circle (0.1);
    \draw  (-4.4,4.3) circle (0.1);
    \draw  (-4.65,3.8) ellipse (0.4 and 0.1);

    %Top Left
    \draw  (-6.6,2.5) ellipse (0.4 and 0.1);
    \draw  (-6.8,2) circle (0.1);
    \draw  (-6.4,2) circle (0.1);
    \draw  (-4.6,2.5) ellipse (0.4 and 0.1);
    \draw  (-4.6,2.1) ellipse (0.4 and 0.1);

    %Bottom Right
    %Q3
    \draw (0.75, -1.5) -- (4.75,-1.5);
    \draw (2.75, -3.5) -- (2.75,0.5);
    \draw  (1,-2.4) ellipse (0.08 and 0.25);
    \draw  (2.3,-2.65) arc (-90:90:0.08 and 0.25);
    \draw [dashed] (2.3,-2.15) arc (90:270:0.08 and 0.25);
    \draw (1,-2.15) to [out=25,in=155] (2.3,-2.15);
    \draw (1,-2.65) to [out=-25,in=205] (2.3,-2.65);
    \draw  (1.3,-2.05) arc (90:-90:0.1 and 0.35);
    \draw  [dashed] (1.3,-2.05) arc (90:270:0.1 and 0.35);
    \draw  (1.85,-2.4) arc (0:-180:0.2 and 0.1); %bottom inner curve
    \draw  (1.8,-2.46) arc (0:180:0.15 and 0.07); %top inner curve
    %Q1
    \draw  (3.2,-0.6) ellipse (0.08 and 0.25); %leftmost ellipse
    \draw (3.2,-0.35) to [out=25,in=155] (4.5,-0.35); %top curve
    \draw (4.5,-0.35) arc (90:-90:0.08 and 0.25); %right ellipse half (solid)
    \draw [dashed] (4.5,-0.35) arc (90:270:0.08 and 0.25); %right ellipse half (dashed)
    \draw (3.2,-0.85) to [out=-25,in=205] (4.5,-0.85); %bottom curve
    \draw  (4.06,-0.6) arc (0:-180:0.2 and 0.1); %bottom inner curve
    \draw  (4.01,-0.66) arc (0:180:0.15 and 0.07); %top inner curve
    \draw (4.22,-0.25) arc (90:-90:0.1 and 0.35); %auxiliary curve right
    \draw [dashed] (4.22,-0.25) arc (90:270:0.1 and 0.35); %auxiliary curve left
    %Q4
    \draw  (3.2,-2.4) ellipse (0.08 and 0.25); %leftmost ellipse
    \draw (3.2,-2.15) to [out=25,in=155] (4.5, -2.15); %top curve
   \draw (4.5,-2.15) arc (90:-90:0.08 and 0.25); %right ellipse half (solid)
    \draw [dashed] (4.5,-2.15) arc (90:270:0.08 and 0.25); %right ellipse half (dashed)
    \draw (3.2,-2.65) to [out=-25,in=205] (4.5,-2.65); %bottom curve
    \draw  (4.06,-2.4) arc (0:-180:0.2 and 0.1); %bottom inner curve
    \draw  (4.01,-2.46) arc (0:180:0.15 and 0.07); %top inner curve
    \draw (3.85, -2) arc (90:-90:0.07 and 0.195); %top auxiliary half (solid)
    \draw [dashed] (3.85, -2) arc (90:270:0.07 and 0.195); %top auxiliary curve (dashed)
    \draw (3.85, -2.5) arc (90:-90:0.05 and 0.155); %bottom auxiliary curve (solid)
    \draw [dashed] (3.85, -2.5) arc (90:270:0.05 and 0.155); %bottom auxiliary curve (dashed)
    %Q2
    \draw  (1,-0.6) ellipse (0.08 and 0.25); %leftmost ellipse
    \draw (1,-0.35) to [out=25,in=155] (2.3,-0.35); %top curve
    \draw (2.3,-0.35) arc (90:-90:0.08 and 0.25); %right ellipse half (solid)
    \draw [dashed] (2.3,-0.35) arc (90:270:0.08 and 0.25); %right ellipse half (dashed)
    \draw (1,-0.85) to [out=-25,in=205] (2.3,-0.85); %bottom curve
    \draw  (1.86,-0.6) arc (0:-180:0.2 and 0.1); %bottom inner curve
    \draw  (1.81,-0.66) arc (0:180:0.15 and 0.07); %top inner curve

    \draw  (1.66,-0.63) ellipse (0.3 and 0.2);
%  \draw [dashed] (1.66,-0.63) ellipse (0.4 and 0.3);
 % \draw (1.26, -0.25) -- (1.26, -0.95);
  \draw   [dashed](1.26, -0.25) arc (90:270:0.1 and 0.35);
     \draw (1.26, -0.25) .. controls (2.35,-0.3) and (2.35,-0.95) .. (1.26, -0.95);
    \draw [<->,thick] (2.5,-1.25) to (3,-1.75);

    \node at (0, -4) {Arrows indicate sccs that must intersect};
  \end{tikzpicture}
  \caption{} \label{fig:resolved}
\end{figure}

Most of $M \subset \Sigma$ runs along arcs in the graphic, representing saddle walls for which one side is labelled $A$ and the other is labelled $B$.  A neighborhood of the corresponding saddle is a $3$-punctured sphere and crossing the saddle wall splits one curve into two:  $c \ra c^\pr \fakecoprod c^{\pr \pr}$.  

We start with a simple lemma.
\vspace{1em}

\begin{lemma}[two of three] \label{lemma:2of3}
  If crossing a saddle wall transforms $c \ra c^\pr \fakecoprod c^{\pr \pr}$ and if two of the three curves compress in $A$ (resp $B$) then so does the third.
\end{lemma}

\begin{proof}
Consider capping off two boundaries of the 3-punctured sphere.
\end{proof}

Of course the third curve may be inessential in $T$, so crossing from a region labelled $A$ to one labelled $B$ may represent two curves bounding essential disks in $A$ fusing at the saddle into a curve inessential in $T$, leaving other unrelated curves bounding disks in $B$. (In this case the two fused curves at $A$ are isotopic in $T$.) Or passing through could represent a curve that bounds a disk in $A$ fusing with a curve that is essential in both $A$ and $B$ to create a curve bounding a disk in $B$, or a similar fission.  In each case there is a unique essential intersection curve $a$ bounding a disk in $A$ associated with the saddle wall and it is disjoint from every essential intersection curve bounding a disk in $B$.

To expedite the discussion we introduce: 
\vspace{0em}

\begin{defin} \label{def:conventions}
\begin{itemize}  Some conventions:
  \item We call a scc $a$ (resp $b$) if it is essential in $T$ and compresses in $A$ (resp $B$). 
  \item An edge-path in $2C(T)$ in which each vertex is of type $(a, b)$ (as just defined) is an \underln{admissible} path.
  \item Near a saddle wall crossing (swc) point $p$ we call a scc of type $a$ or $b$ \underln{local} if it is a scc of one of four resolutions of $\xi_P$.  Otherwise we call it \underln{far}.
  \end{itemize}
\end{defin}

With these conventions, we have just observed that any saddle wall that is part of $M$ is associated to a `cloud' of vertices in $2C(T)$ of the form $(a, b)$, with $a$ uniquely defined, and $b$ any curve of that type coming from the  adjacent graphic region labelled $B$.  Any two vertices in such a cloud are connected via the relation 
$$(a, b) \ra (a, b^\pr) \: \textrm{since $a$, $b$, $b^\pr$ are all pairwise disjoint.}$$

Recalling our artistic rendering convention for defining the 1-manifold $M$, we now consider how these clouds of vertices in $2C(T)$ are related as we pass from one saddle wall to another, either through or around a corner of a vertex in the graphic representing a saddle wall crossing.

\vspace{1em}
 \begin{lemma}
   For all cases (i)
   $\arraycolsep = 2.0pt
   \begin{array}{c|c}
     A & A \\
     \hline
     B & B
   \end{array}$
   , (ii)
   $\arraycolsep = 2.0pt
   \begin{array}{c|c}
     A & B \\
     \hline
     B & A
   \end{array}$
   , (iii)
   $\arraycolsep = 2.0pt
   \begin{array}{c|c}
     A & A \\
     \hline
     A & B
   \end{array}$
   , and (iv)
   $\arraycolsep = 2.0pt
   \begin{array}{c|c}
     A & B \\
     \hline
     B & B
   \end{array}$
there is an admissible path in $2C(T)$ from some vertex in one cloud to some vertex in the other.  
 \end{lemma}

%\vspace{-1em}
%\noindent
%\textbf{Note:} Combining with Lemma B we may replace "some" with "any" in its two occurrences in Lemma C.

\begin{proof}
Since each pair of vertices in a cloud are connected by an edge in $2C(T)$, and such an edge is obviously an admissible path, it suffices to replace both occurences of `some' with `every' in its two occurences in the lemma.  
\medskip

   Case (i)
   $\arraycolsep = 2.0pt
   \begin{array}{c|c}
     A & A' \\
     \hline
     B & B'
   \end{array}$
   : If any $b$ (or any $b^\pr$) is far it will be unaffected by passing through any saddle wall, so pick $b = b^\pr =$ far. Moreover, passing through the saddle wall between the regions $A$ and $A'$ will change the $a$ curve to a disjoint $a'$.  Then $(a, b) \ra (a^\pr, b) = (a^\pr, b^\pr)$ is an admissible path from one cloud to the other.

   If $b$ and $b^\pr$ are both local there is a subtlety. In all cases at most one diagonal pair of quadrants contain intersecting curves (see Figure \ref{fig:resolved}); call this a \underln{dangerous diagonal}. But the appropriate choice of a two-step process avoids the dangerous diagonal, so all curves are disjoint and the path is admissible:
   \begin{tikzpicture}
    [baseline={([yshift=9pt]current bounding box.south)}]
     \node at (0,1) {$(a, b)$};
     \draw [->] (0.6, 1) -- (1.6, 1.4);
     \draw [->] (0.6, 1) -- (1.6, 0.6);
     \node [right] at (1.6, 1.4) {$(a, b^\pr)$};
     \node [right] at (1.6, 0.6) {$(a^\pr, b)$};
     \node [right] at (1.92, 1) {or};
     \draw [->] (2.8, 1.4) -- (3.8, 1.1);
     \draw [->] (2.8, 0.6) -- (3.8, 0.9);
     \node [right] at (3.8, 1) {$(a^\pr, b^\pr)$};
   \end{tikzpicture}
  
   Case (ii)
   \begin{tikzpicture}
     [baseline={([yshift=9pt]current bounding box.south)}]
     \draw [rounded corners] (0, 0.4) -- (0.4, 0.4) -- (0.4, 0);
     \draw [rounded corners] (0.45, 0.85) -- (0.45, 0.45) -- (0.85, 0.45);
     \node [above right] at (-0.1, -0.1) {B};
     \node [above right] at (-0.1, 0.4) {A};
     \node [above right] at (0.4, 0.4) {B};
     \node [above right] at (0.38, -0.1) {A};
   \end{tikzpicture}
   : It suffices to consider one arc of $M$. Regardless of whether $b$ is local or far leave it fixed. The only problematic possibility for a transition $(a, b) \ra (a^\pr, b)$ is if $a$ and $a^\pr$ are intersecting curves from a dangerous diagonal. In this context case (5) cannot occur since there $a$ and $a^\pr$ would have intersection number = 1. But where it is possible, cases (2), (3), and (4), the topology is the same: $a$ and $a^\pr$ are crossing non-boundary-parallel sccs on a 4-punctured sphere:

   \begin{figure}[ht] 
     \centering
     \begin{tikzpicture}
       \draw (0, 2) ellipse (0.25cm and 0.5cm);
       \draw (2, 0) ellipse (0.5cm and 0.25cm);
       \draw (2, 4) ellipse (0.5cm and 0.25cm);
       \draw (4, 2) ellipse (0.25cm and 0.5cm);

       \draw (0, 2.5) to [out = 20, in = 250] (1.5, 4);
       \draw (1.5, 0) to [out = 110, in = -20] (0, 1.5);
       \draw (2.5, 0) to [out = 70, in = 200] (4, 1.5);
       \draw (4, 2.5) to [out = 160, in = 290] (2.5, 4);

       \draw [rotate = 45] (1.32,0) arc (180: 360: 1.5cm and 0.5cm);
       \draw [dashed, rotate = 45] (1.32,0) arc (180: 0: 1.5cm and 0.5cm);
       \path [pattern = vertical lines, rotate around = {45:(2,2)}] (2, 2) ellipse (1.5 cm and 0.5cm);

       \draw [rotate around = {-45:(0.94,3.06)}] (0.94,3.06) arc (180: 360: 1.5 cm and 0.5 cm);
       \draw [dashed, rotate around = {-45:(0.94,3.06)}] (0.94,3.06) arc (180: 0: 1.5 cm and 0.5 cm);
       \path [pattern = north east lines, rotate around = {-45:(2,2)}] (2, 2) ellipse (1.5 cm and 0.5cm);
     \end{tikzpicture}
     \caption{} \label{fig:2crossings}
   \end{figure}

   But an innermost circle argument reduces the geometry to precisely the case pictured in Figure \ref{fig:2crossings}, where we see that all four boundary curves also compress in $A$. By Lemma \ref{lemma:2of3} at most two of these boundary curves are inessential in $T$; let $\de$ be a boundary curve that is essential in $T$.  
   %Then $ and $\de_2$ are both inessential in $T$ then so would be $a$ therefore one of $\de_1$ or $\de_2$ is essential, say it is $\de_1$, and we may therefore shuffle through that component
   Then $(a, b) \ra (\de, b) \ra (a^\pr, b)$ is an admissible path from one cloud to the other. (Note that the intermediate vertex $(\de, b)$ is not in either cloud.)  Call this the \underln{lantern trick}.

   Case (iii)
   \begin{tikzpicture}
     [baseline={([yshift=9pt]current bounding box.south)}]
     \draw [rounded corners] (0.45, 0) -- (0.45, 0.45) -- (0.85, 0.45);
     \node [above right] at (-0.1, -0.1) {A};
     \node [above right] at (-0.1, 0.4) {A\textsuperscript{$\pr$}};
     \node [above right] at (0.37, 0.4) {A\textsuperscript{$\pr \pr$}};
     \node [above right] at (0.4, -0.1) {B};
   \end{tikzpicture}
   : This is similar to case (ii). Regardless of whether $b$ is local or far, leave it fixed. The only problematic case in passing from $(a, b)$ to $(a^{\pr \pr}, b)$ is if $a$ and $a^{\pr \pr}$ are intersecting sccs from a dangerous diagonal. As in case (ii), case (5) cannot occur. Cases (2), (3), and (4) are also handled as in case (iii) by the lantern trick.

   Case (iv)
   \begin{tikzpicture}
     [baseline={([yshift=9pt]current bounding box.south)}]
     \draw [rounded corners] (0, 0.43) -- (0.42, 0.43) -- (0.42, 0.85);
     \node [above right] at (-0.07, -0.1) {B};
     \node [above right] at (-0.1, 0.4) {A};
     \node [above right] at (0.37, 0.4) {B\textsuperscript{$\pr \pr$}};
     \node [above right] at (0.37, -0.1) {B\textsuperscript{$\pr$}};
   \end{tikzpicture}
   : If any $b$, $b^\pr$, $b^{\pr \pr}$ is far, select it and keep it constant, and keep $a$ constant as well.  Then the corresponding vertex in $2C(T)$ lies in both clouds
   
   Now assume all $b$, $b^\pr$, $b^{\pr \pr}$ are local.

   Case (1): If $a, b, b', b''$ are all disjoint, then $(a, b) \ra (a, b^{\pr \pr})$ is an admissible path.  

   Case (2), (3), (4): The only non-disjoint situation is when $b$ and $b^{\pr \pr}$ are intersecting curves from a dangerous diagonal pair. Again use the lantern trick.

   Case (5): This case cannot occur:  By intersection number, the three $B$ regions cannot contain a dangerous diagonal pair, so $A$ must be one of the dangerous diagonal pair of quadrants. Then the other quadrant in that pair must be $B^\pr$.  The other two quadrants then are represented by the boundary curves in the twice-punctured torus shown in Figure \ref{fig:resolved}, one compressible in $A$ and the other in $B$.  In order to avoid a label $A$ in one of these quadrants, the former curve must be inessential in $T$ so the twice-punctured torus is actually only once-punctured.  But in that case, the label $B$ (or $B''$) for this quadrant implies that there is a (far) $b$ curve, contradicting assumption.   
   \end{proof}
%
%Refer back to Figure 3(i) and see that given the boundary conditions of case (i)---which we have reduced to---there will be an essential component $\gamma$ of $M$ in the parameter annulus. Lemma C together the Note following its statement allows us to propagate an admissible family of weak reductions around $\gamma$. 

We now complete the proof of Theorem \ref{thm:cwr} as described in the plan above.  Pick an essential component of the $1$-manifold $M$ in $\Sigma$ and a saddle wall edge in it.  Beginning at that edge and moving around the component of $M$, construct an admissible path through $2C(T)$ as described above, until one returns to the original edge.  Since the component is homotopic in $I \times I \times S^1$, rel its beginning and ending point, to a simple loop $\{pt\} \times S^1$ which represents $\tau$, the cwr given by the admissible path also represents $\tau$. \end{proof}

\section{The genus $3$ case} \label{sect:genus3}

We are now in a position to prove the Powell Conjecture for genus $3$ splittings of $S^3$, by verifying Conjecture \ref{conj:cwr} in this case.

We need a bit of terminology.  Suppose $H$ is a genus $3$ handlebody and $D \subset H$ is a separating disk.  Then $D$ divides $H$ into a solid torus $H_D$ and a genus $2$ handlebody; the meridian disk $D'$ of $H_D$ is well-defined up to proper isotopy in $H$ and is non-separating in $H$; call $D'$ the {\em surrogate} of $D$ in $H$.  

\begin{lemma}  \label{lemma:step1}Suppose $S^3 = A \cup_T B$ is a genus $3$ splitting, and disks $a \subset A, b \subset B$ are a weakly reducing pair.  Then at least one of the following $4$ disks is primitive:  
\begin{itemize} 
\item $a$, if $a$ is non-separating in $A$,
\item the surrogate of $a$ if $a$ is separating in $A$
\item $b$, if $b$ is non-separating in $B$
\item the surrogate of $b$ if $b$ is separating in $B$
\end{itemize}
\end{lemma}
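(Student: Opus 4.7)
The plan is to apply \cite{CG} to the weakly reducing pair $(a, b)$, obtaining a reducing sphere $S$ for the splitting that meets $T$ in a single essential curve $c$. By a standard innermost-disk argument (using that $\partial a \cup \partial b$ may be arranged disjoint from $c$, after which $S \cap a$ and $S \cap b$ consist only of circles that can be removed via innermost disks in $S^3$), one isotopes $S$ so that $S \cap (a \cup b) = \emptyset$. Then $a$ and $b$ each lie in one of the two balls $V, W$ cut off by $S$, and $c$ separates $T$ into pieces of genera summing to $3$ with both at least $1$, so after relabelling $(g_V, g_W) = (1, 2)$. The genus $1$ Heegaard of $V$ decomposes $V = V_A \cup V_B$ into solid tori, whose meridian disks $\alpha \subset V_A$ and $\beta \subset V_B$ satisfy $|\partial \alpha \cap \partial \beta| = 1$; hence $\alpha$ and $\beta$ are already primitive in the genus $3$ splitting of $S^3$. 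The strategy is to identify one of $\tilde a, \tilde b$ with $\alpha$ or $\beta$ up to isotopy.

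The first main step is the case $a \subset V$, with $b \subset V$ handled symmetrically. Since $a$ is a disk in the solid torus $V_A$, either $a$ is essential in $V_A$ (so $a$ equals $\alpha$ up to isotopy and is primitive), or $a$ is inessential in $V_A$. In the inessential case, $\partial a$ is boundary-parallel in $T \cap V$ and hence parallel to $c$ in $T$, so $\partial a$ also bounds a disk in $B$; this makes $a$ a separating disk in $A$, and the solid-torus side $H_a^1$ of the $a$-splitting of $A$ is isotopic to $V_A$. The surrogate $\tilde a$, being the meridian of $H_a^1$, therefore coincides with $\alpha$ and is primitive.

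The remaining and most delicate case is both $a, b \subset W$. By Waldhausen's theorem $T \cap W$ is the standard genus $2$ Heegaard of the ball $W$, which is reducible; a second application of \cite{CG} within $W$ produces a reducing sphere $S' \subset W$ that can be isotoped disjoint from $a \cup b$. After checking that $S'$ is also a reducing sphere for $T$ in $S^3$ (its curve $S' \cap T$ is essential in $T$ since it is essential in $T \cap W$ and disjoint from $c$, and it bounds disks in both $A$ and $B$), one obtains a new $(1, 2)$-decomposition of $(S^3, T)$ whose genus $1$ side $V' \subset W$ may be used in place of $V$. If $a$ or $b$ lies in $V'$ we conclude by the previous paragraph; otherwise both remain in the new genus $2$ region, and we iterate, each step carving away a sub-ball from the region in which $a, b$ must lie.

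The main obstacle I expect is making the iteration rigorous: one must verify both that the reducing sphere produced inside $W$ is genuinely a reducing sphere for the ambient splitting, and that the iteration must terminate with at least one of $a, b$ placed in a genus $1$ side. A cleaner alternative would be to establish separately the analog of this lemma for the standard genus $2$ Heegaard of a ball (showing that any weakly reducing pair there has a primitive disk), which would immediately dispose of the remaining case via the first-step reducing sphere $S$.
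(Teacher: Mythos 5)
Your approach is genuinely different from the paper's. The paper splits on whether $a$ (and then $b$) is separating, and in the key case where both are non-separating it argues directly: compress $T$ along $a$ and $b$ to get a torus $T''$ bounding a solid torus $W$, push the once-compressed surface $T'$ into $W$ to realize it as a genus $2$ Heegaard surface for $W$ in which $b$ is the unique $\partial$-reducing disk of the compression-body side, so $b$ is primitive. You instead first produce a reducing sphere $S$ disjoint from $a\cup b$ and then case-split on where $a$ and $b$ land; this is plausible, but it front-loads a step the paper's argument is designed to avoid, and that step is where I think you have a gap.

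Specifically, the assertion that ``$\partial a\cup\partial b$ may be arranged disjoint from $c$'' is not a standard isotopy fact: $\partial a$ and $c$ both bound disks in $A$, and two such curves on the boundary of a handlebody can intersect essentially with no bigons (think of a meridian and a curve running over two handles). So you cannot simply push $c$ off $\partial a$; you must \emph{choose} the reducing sphere carefully so that $c$ is disjoint from $\partial a\cup\partial b$. Building such a sphere from the pair $(a,b)$ is close to the entire content of the lemma --- in effect it is what the paper's $T''$-and-$W$ construction accomplishes --- so invoking \cite{CG} as a black box and then ``isotoping off'' does not get you past the real difficulty. (Once $c$ is disjoint from $\partial a$ and $\partial b$, the rest of the innermost-circle cleanup in $S^3$ you describe is fine, since the disks of $S$ then meet $a$ and $b$ only in circles and the surgered pieces stay in a single handlebody.)

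By contrast, the two worries you flag explicitly are in fact the manageable ones. A reducing sphere for the genus $2$ splitting of the ball $W$ \emph{is} a reducing sphere for the ambient $T$: its curve of intersection lies in the incompressibly-embedded subsurface $T\cap W$, so it is essential in $T$, and its two disks sit in $A\cap W\subset A$ and $B\cap W\subset B$. And the iteration stops immediately, because a genus $1$ Heegaard splitting of a ball has a unique compressing disk on each side and these meet in one point, so no weakly reducing pair can live entirely inside a genus $1$ side; thus after at most one further reducing sphere one of $a,b$ is forced into a genus $1$ ball, where your first-step analysis (meridian, or $\partial$-parallel to the reducing disk and hence separating with primitive surrogate) applies. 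So if you repair the construction of the initial disjoint reducing sphere --- which realistically means doing something equivalent to the paper's compression-body argument --- the rest of your outline does close up.

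A further small remark: your first-step analysis tacitly treats only the case where $a$ is in a genus $1$ side that is a solid torus pair; when \emph{both} $a$ and $b$ lie in the genus $1$ side $V$, the weak-reducibility constraint forces at least one of them to be $\partial$-parallel to the relevant disk of $S$, hence separating with primitive surrogate --- worth saying explicitly, since it is the case the paper handles by assuming $a$ separating at the outset.
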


\begin{proof}  
{\bf Case 1:  $a$ is separating.}  One possibility in this case is that $\bdd b$ lies on the boundary of the solid torus $A_a \subset A$. In that case, if $b$ is non-separating, then $\bdd b$ must be a longitude of $A_a$, verifying that the surrogate of $a$ (and incidentally also $b$) is primitive.  If $b$ is separating, then $\bdd b$ must be parallel to $\bdd a$ in $T$, so together $a$ and $b$ constitute a reducing sphere for $T$, cutting off a genus $1$ Heegaard splitting whose splitting surface is $\bdd A_a$.  It follows again that the surrogate of $a$ (i. e. the meridian of $A_a$) is primitive (and incidentally that the surrogate of $b$ is also primitive).

The other possibility is that $\bdd b$ is essential in the punctured genus $2$ component of $T - \bdd a$.  This implies that the surrogate of $b$ (if $b$ is separating) is disjoint from the surrogate of $b$, so we may as well jump immediately to Case 2.

{\bf Case 2:  Both $a$ and $b$ are non-separating.}  Let $T'' \subset S^3$ be the torus obtained by simultaneously compressing $T$ along $a$ and $b$.  $T''$ bounds a solid torus $W$ on one side or the other, say $W$ lies on the side in which $b$ lies.  The surface $T'$ obtained from $T$ by compressing only along $a$, when pushed into $W$, describes a genus $2$ Heegaard splitting $W = A' \cup_{T'} B'$ in which $\bdd W = \bdd_- B'$ and $b$ is the unique boundary-reducing disk for the compression body $B'$.  It follows that $b$ is primitive in $T'$, hence in $T$.
\end{proof}

\bigskip

Following Lemma \ref{lemma:step1} and Corollary \ref{cor:orthog2} there is an obvious candidate for assigning a Powell equivalence class of homeomorphisms to the pair $(a, b)$:  if $a$ (or its surrogate) is primitive, assign the class of homeomorphisms that carries that disk to the disk $a_1$ in the standard splitting $A \cup_{T_3} B$.  If $b$ (or its surrogate) is primitive, assign the class of homeomorphisms that carries that disk to $b_3$ in the standard splitting.  It is only necessary to check that if two of the disks listed in Lemma \ref{lemma:step1} are primitive, the assignments we have made coincide.  So suppose both $a$ (or its surrogate) and $b$ (or its surrogate) are primitive.

If both $a$ and $b$ are non-separating (so surrogates are not involved) and therefore primitive, a straightforward outermost arc argument will find a disk $b_{\perp} \subset B$ that is orthogonal to $a$ and disjoint from $b$.  Similarly, one can then find a disk $a_{\perp} \subset A$ that is dual to $b$ and is disjoint from both $a$ and $b_{\perp}$.  There is then an obvious homeomorphism $(S^3, T) \to (S^3, T_3)$ (well-defined up to the $2$-strand braid group of the torus) that takes the four disks $a, b_{\perp}, b, a_{\perp}$ to, respectively $a_1, b_1, b_3, a_3$. The Powell class of this homeomorphism then coincides both with that determined by $a$ and that determined by $b$.  

There remains the possibility that one or both primitive disks are surrogates of $a$ or $b$ and intersect, in which case they are orthogonal.  There is then a homeomorphism $(S^3, T) \to (S^3, T_3)$ that carries one disk to $a_1$ and the other one to $b_1$.  But a further Powell move (an exchange) carries the pair to $a_3$ and $b_3$.  The first homeomorphism is in the Powell equivalence class determined by $a$ or its conjugate and the second by $b$ or its conjugate.  Since there is a Powell move taking one to the other, the equivalence classes coincide.  

The second and final step is to show that any two vertices in $2C(T)$ that are connected by an edge determine the same Powell equivalence class.  Given the simple rule above for assigning a Powell equivalence class, this will follow immediately from the following lemma.

\begin{lemma} \label{lemma:step2} For the genus $3$ Heegaard splitting $S^3 = A \cup_T B$, suppose $a \subset A$, $b_1, b_2 \subset B$ are three pairwise disjoint essential disks .  Then either
\begin{itemize}
\item $a$ is primitive
\item $a$ is separating and its surrogate is primitive
\item $b_1$ and $b_2$ are parallel in $B$ or
\item $b_1$ is the surrogate of $b_2$ or vice versa.
\end{itemize}
The symmetric statement is true for three pairwise disjoint essential disks $(a_1, a_2, b)$
\end{lemma}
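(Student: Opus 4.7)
The plan is to assume that the third and fourth alternatives of the lemma both fail---$b_1$ and $b_2$ are not parallel in $B$ and neither is the surrogate of the other---and to show that one of the first two alternatives then holds: $a$ is primitive, or $a$ is separating with primitive surrogate.

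Under these hypotheses, cutting $B$ along $\{b_1, b_2\}$ decomposes it into a disjoint union of solid tori---one, two, or three in number according as $b_1, b_2$ are jointly non-separating, jointly separating, or both separating.  Let $T''$ be the surface obtained by surgering $T$ along $b_1, b_2$; each component of $T''$ is a torus bounding one of these solid tori on the $B$-side, with cap disks $b_1^{\pm}, b_2^{\pm}$ embedded as subdisks, and the disk $a$ survives as a properly embedded disk in the $A$-side of $T''$ (which is $A$ with two $2$-handles attached along $\partial b_1, \partial b_2$).

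\textbf{Essential case.}  If $\partial a$ is essential in the component $T^{(i)}$ of $T''$ containing it, then $\partial a$ is non-separating in $T$ (otherwise $[\partial a] = 0$ in $H_1(T)$, contradicting essentiality on the torus $T^{(i)}$).  The intersection pairing on $H_1(T)$ restricted to the complementary Lagrangians $K_A := \ker(H_1(T) \to H_1(A))$ and $K_B := \ker(H_1(T) \to H_1(B))$ is non-degenerate (these are direct summands of $H_1(T)$ for Heegaard splittings of $S^3$), so the primitive class $[\partial a] \in K_A$ admits a primitive dual $\alpha \in K_B$ with pairing one.  By standard handlebody theory, $\alpha$ is realized as the homology class of the boundary of an essential non-separating disk $b \subset B$; a bigon/outermost-arc surgery on $\partial b$ within $T$ (extending to $b$ inside $B$) reduces the geometric intersection $|\partial a \cap \partial b|$ to one.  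Hence $a$ is primitive.

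\textbf{Inessential case.}  Otherwise $\partial a$ bounds a disk $a' \subset T^{(i)}$.  The parallelism between $a$ and $a'$ is a $3$-ball in the $A$-side of $T''$; pushing $a'$ slightly into the $B$-side produces a disk $b_a \subset B$ with $\partial b_a = \partial a$.  Since $a$ is essential in $A$, so is $b_a$ in $B$, and $a \cup b_a$ is a reducing sphere for $T$ in $S^3$.  A sub-case analysis based on which of the caps $b_1^{\pm}, b_2^{\pm}$ lie in $a'$: the empty subset forces $a$ inessential in $A$, a contradiction.  A single-cap subset $\{b_j^{\pm}\}$ forces $\partial a$ isotopic to $\partial b_j$ in $T$; if $b_j$ is non-separating the resulting non-separating reducing sphere is impossible in $S^3$, since any curve on $T$ bounding disks in both handlebodies is separating, while if $b_j$ is separating the sphere $a \cup b_j$ cuts off the summand solid torus of $b_j$ together with a solid-torus piece $A_a \subset A$, whose meridians meet once---giving primitivity of the surrogate of $a$.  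The remaining sub-cases (two or more caps in $a'$) analogously produce a reducing sphere $a \cup b_a$ that cuts $S^3$ into two balls, one being the union of the solid-torus piece $A_a \subset A$ cut off by $a$ and a solid-torus piece $B_a \subset B$ cut off by $b_a$, glued along a genus-one-with-one-hole subsurface of $T$; standard handlebody theory then gives that the meridians of the two solid tori meet in one point, so the surrogate of $a$ is primitive.

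\textbf{Main obstacle.}  The main technical work is the combinatorial sub-case analysis in the inessential case---identifying the separating subsurface of $T$ bounded by $\partial a$ for each configuration of caps in $a'$ across the possibly several components of $T''$, and verifying the solid-torus glueing structure on the correct side of the reducing sphere.  A secondary issue in the essential case is realizing the algebraic dual class in $K_B$ by an honest embedded disk in $B$, which rests on standard handlebody theory combined with the structure of the standard genus-$3$ Heegaard splitting of $S^3$.
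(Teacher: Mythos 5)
Your approach is genuinely different from the paper's.  The paper splits on whether $a$ is separating: in the non-separating case it tubes two copies of $a$ along an arc in $\partial N$ (where $N$ is a piece of $B$ cut along $b_1, b_2$ with a $2$-handle attached along $a$) to produce a separating disk $a^+$ with $a$ as its surrogate, then reduces to the separating case.  You instead surger $T$ along $b_1, b_2$ to obtain a union of tori $T''$ (which does hold under the hypotheses, as you implicitly assume but don't verify) and split on whether $\partial a$ is essential in its component $T^{(i)}$.  That framework is a reasonable alternative, and your inessential case is broadly aligned with the paper's Case 1, though the cap-disk enumeration is sketchy.

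There is, however, a genuine gap in the essential case.  You argue: $[\partial a]$ is primitive in $K_A$, so it has a primitive homological dual $\alpha \in K_B$; $\alpha$ is realized by a disk $b \subset B$; and ``a bigon/outermost-arc surgery'' reduces $|\partial a \cap \partial b|$ to one, hence $a$ is primitive.  This last step does not work.  Algebraic intersection number $1$ does not force geometric intersection number $1$ for two curves on a surface of genus $\geq 2$, and there is no available outermost-arc surgery here ($a$ and $b$ lie in opposite handlebodies, so their only interaction is $\partial a \cap \partial b$, a finite set of points in $T$; a bigon need not exist in minimal position).  Indeed, if this argument were valid it would prove that \emph{every} non-separating compressing disk of $A$ is primitive, independent of the hypotheses on $b_1, b_2$ --- which is false: there are non-primitive non-separating compressing disks for Heegaard splittings of $S^3$ (this is exactly why the primitive disk complex, as studied by Cho and others, is a proper subcomplex of the full disk complex).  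The irony is that your own framework supplies the right argument, which you then discard: when $\partial a$ is essential in the torus $T^{(i)}$, the solid torus $V_i$ on the $B$-side of $T^{(i)}$ is unknotted (its complement in $S^3$ is $\partial$-compressible via $a$, hence a solid torus), $\partial a$ is its complementary meridian, and a meridian disk of $V_i$, isotoped in $V_i$ so that its boundary lies in $T\cap T^{(i)}$ and meets $\partial a$ once, is the required dual disk in $B$.  That is the torus-specific fact that makes the essential case go; the purely homological shortcut does not.
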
  

\begin{proof}  {\bf Case 1:  $a$ is separating.}  If the boundary of either $b_i$ lies on the same side of $a$ as $a'$ then the disk is either orthogonal to $a'$ or the disk together with $a$ is a reducing sphere cutting off a genus $1$ summand with $a'$ as meridian.  In any case, $a'$ is primitive and we are done.  So we henceforth assume that the boundary of both $b_i$ lie on the genus $2$ side of $T - \bdd a$, and neither is parallel to $\bdd a$.

If, together, $b_1$ and $b_2$ separate $T$ then the component containing $\bdd a$ is either a torus in which $a'$ is a meridian, and so is primitive, or a genus $2$ surface.  In the latter case, the other component must be attached either by a single disk that cuts off a torus with the other disk as meridian (so one of $b_1, b_2$ is surrogate to the other) or attached by two disks, one copy of each $b_i$, that cut off a sphere, in which case $b_1$ and $b_2$ are parallel in $B$.  

{\bf Case 2: $a$ is non-separating.}  One of the components of $B - (b_1 \cup b_2)$ contains $\bdd a$ and has connected boundary; attaching a $2$-handle to that component along $a$ creates a $3$-manifold $N$ that will still be connected.  

Also $\bdd N$ is still connected, for if it were disconnected, with boundary components $\bdd_1 N, \bdd_2 N$, each containing a copy of $a$, then there would be a path through that component of $B - (b_1 \cup b_2)$ from $\bdd_1 N$ to $\bdd_2 N$.  But there is already a path from $\bdd_1 N$ to $\bdd_2 N$ in $T - a$, since $a$ is non-separating.  Together the paths would give a circle that intersects the surface $\bdd_1 N$ in a single point, which is impossible in $S^3$.  

So let $\gamma \subset \bdd N$ be a path from one copy of $a$ in $\bdd N$ to the other.  Tubing the copies of $a$ together along the boundary of a neighborhood of $\gamma$ gives a separating disk $a^+$ in $A$, disjoint from the $b_i$, and $a$ is the surrogate for $a^+$.  Now apply case 1 to the three disks $a^+, b_1, b_2$.  
\end{proof}

\begin{cor}  The Powell Conjecture is true for genus $3$ splittings.
\end{cor}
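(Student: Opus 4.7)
The plan is to verify Conjecture \ref{conj:cwr} in the genus $3$ setting; combined with Theorem \ref{thm:cwr} and the accounting immediately following Conjecture \ref{conj:cwr}, this yields the Powell Conjecture for $g = 3$. Since Corollary \ref{cor:orthog2} relies inductively on the Powell Conjecture for genus $g - 1 = 2$, I would first invoke Goeritz's Theorem as the base case.

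The assignment of a Powell equivalence class $h_{(a,b)}$ to each vertex $(a, b) \in 2C(T)$ goes as follows. By Lemma \ref{lemma:step1}, at least one disk among $\{a, \text{surrogate}(a), b, \text{surrogate}(b)\}$ is primitive. Feed such a primitive disk into Corollary \ref{cor:orthog2} to produce the Powell equivalence class of homeomorphisms $(S^3, T) \to (S^3, T_3)$ carrying that disk to the corresponding $a_1$ or $b_3$ in the standard splitting. The first required check is that when more than one of the four listed disks is primitive, the resulting Powell classes coincide. I would handle the case of $a, b$ both non-separating primitives via an outermost-arc argument producing disks $b_\perp \subset B$ orthogonal to $a$ and $a_\perp \subset A$ orthogonal to $b$, pairwise disjoint from $\{a, b\}$; the four disks admit a common homeomorphism to $(a_1, b_1, b_3, a_3) \subset T_3$. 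The cases involving surrogates are reconciled by noting that a standard switch (an exchange of two standard summands) is a Powell move, allowing the two equivalence classes to be identified.

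The heart of the proof is verifying the second clause of Conjecture \ref{conj:cwr}: edges of $2C(T)$ preserve the assignment. An edge is a shuffle $(a, b) \leftrightarrow (a, b')$ or $(a, b) \leftrightarrow (a', b)$ with the three disks pairwise disjoint. Here Lemma \ref{lemma:step2} does all the work: it forces one of four alternatives — for the triple $(a, b, b')$, either $a$ (or its surrogate) is primitive, giving a common $A$-side primitive shared by both endpoint vertices; or $b_1 \parallel b_2$ in $B$; or $b_1$ and $b_2$ share a common surrogate. In the latter two cases the two endpoints yield the same primitive disk in $B$, and hence the same Powell class. The symmetric statement handles the other shuffle.

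Having checked Conjecture \ref{conj:cwr} in genus $3$, Theorem \ref{thm:cwr} then supplies, for any element $\tau$ of the Goeritz group, a circle of weak reductions whose successive vertices all have a common Powell class; tracing around it forces $\tau$ to be Powell equivalent to the identity, as required. The genuine topological obstacle — ensuring that every weakly reducing pair contains a primitive disk, and that shuffles preserve the canonical choice — is precisely what Lemmas \ref{lemma:step1} and \ref{lemma:step2} dispatch, so the remaining task in my proposal is a careful but essentially bookkeeping-level assembly, taking particular care to enumerate the well-definedness sub-cases.
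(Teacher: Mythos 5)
Your proposal is correct and follows essentially the same route as the paper: invoke Goeritz's theorem to seed Corollary \ref{cor:orthog2}, use Lemma \ref{lemma:step1} to define the assignment of a Powell class to each vertex of $2C(T)$, reconcile the overlapping cases (both non-separating primitives via mutually disjoint orthogonal disks; surrogate cases via an exchange move), and then use Lemma \ref{lemma:step2} to show edges preserve the assignment, feeding the result into the framework of Conjecture \ref{conj:cwr} and Theorem \ref{thm:cwr}. The only difference is one of exposition: the paper places the assignment and its well-definedness in the text preceding the corollary and keeps the corollary's proof to a one-line appeal to Lemma \ref{lemma:step2}, whereas you recapitulate the whole chain inside the corollary's proof.
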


\begin{proof}  Suppose two vertices in $2C(T)$ are connected by an edge.  With no loss of generality, the two vertices are $(a, b_1)$ and $(a, b_2)$ with the three disks $a, b_1, b_2$ pairwise disjoint.  In each case given by Lemma \ref{lemma:step2} the procedure described for assigning a Powell equivalence class assigns the same equivalence class to both cases.
\end{proof}

\section{Concluding remarks}

The methods here seem to hold great promise for the general case, genus $g \geq 4$.  But there are many difficulties:  Presented with a pair of weakly reducing disks $a \subset A, b \subset B$, the methods of Casson-Gordon (\cite{CG}) describe how to proceed to exhibit a complete non-separating weakly reducing collection of disks $a_1, ..., a_{g_1}, b_{g_1 + 1}, ..., b_g$ for $T$.  But many choices are involved in creating such a collection, and it is not clear that different choices will lead, via Theorem \ref{thm:cindep}, to the same Powell equivalence class of homeomorophisms to $T_g$, even under strong inductive assumptions.  

To begin with, the first step in the Casson-Gordon recipe, following the choice of $a, b$, is to use the pair to maximally weakly reduce the splitting.  But this involves choices: for example, one could choose first to expand $a$ to a maximal collection of compressing disks in $A$ that are disjoint from $b$.  (Or one could do the reverse!)  The resulting compression body lying in $A$ may itself be well-defined, via the arguments of \cite{Bo}, but there are many possible families of compressing disks for the compression body, and, to apply Theorem \ref{thm:cindep}, a specific collection of compressing disks in $A$ is required.   

The next step in \cite{CG} is to maximally compress the surface $T_{A_b}$ just defined towards the side that contains $B$.  The same difficulty arises: the actual compression disks into the $B$ side that one uses involves a choice.  On the positive side, the resulting surface $T_{BA_b}$, which is well-defined, can be shown to bound a handlebody $M_{BA_b}$ in which $T_{A_b}$ is a Heegaard surface, providing perhaps an inductive foothold.  Furthermore,  Powell-like moves (bubble-moves, eyeglass twists, etc) in the Heegaard surface of $T_{A_b} \subset M_{BA_b}$ can be 'shadowed' by similar moves in the original splitting surface $T$.  But again choice is involved in how to shadow the moves.  Also, even once a primitive disk is chosen for the splitting $T_{A_b} \subset M_{BA_b}$ (inductively we might assume that this choice is unique up to Powell-like moves), such a disk does not pick out a {\em unique} primitive disk in our original surface $T$.  

All along one could hope to show that different choices made lead to Powell equivalent trivializations, but we have failed to find such arguments.

\medskip

Perhaps a good intermediate question is this:

\begin{conj}  Any eyeglass twist on $T_g$ is a Powell move. \label{conj:eyeglass}
\end{conj}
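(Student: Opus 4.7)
Proof plan. The plan is to reduce the eyeglass twist around any eyeglass $\eta = \ell_A \cup v \cup \ell_B$ in $T_g$ to the case treated by Lemma \ref{lemma:eyeglass3o} by iterated use of the $\bdd$-compression decomposition given in the remark following Definition \ref{defin:eyeglass}. Since $g = 2$ is Goeritz's theorem, I focus on $g \geq 3$. Each $\bdd$-compression of a lens replaces the twist around the current eyeglass with a product of two twists around simpler eyeglasses, so it suffices to verify the conjecture for each resulting factor.

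First I would fix a splitting $g = g_1 + g_2$ with $1 \leq g_1 \leq g-1$ together with the standard separating curve $c_{g_1} \subset T_g$ and its bounding disks $d \subset A$ and $d' \subset B$. After general position, I would eliminate closed intersection circles of $\ell_A \cap d$ (resp.\ $\ell_B \cap d'$) by innermost-disk arguments, then repeatedly $\bdd$-compress $\ell_A$ along outermost subdisks of $d$ and $\ell_B$ along outermost subdisks of $d'$. Since $c_{g_1}$ is separating in $T_g$, the intersection numbers $|\bdd \ell_A \cap c_{g_1}|$ and $|\bdd \ell_B \cap c_{g_1}|$ are both even, and each such $\bdd$-compression decreases one of them by two. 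After finitely many iterations, every factor eyeglass in the decomposition has both lens boundaries disjoint from $c_{g_1}$, hence each lying wholly on one side of $c_{g_1}$. By possibly exchanging the labels $g_1 \leftrightarrow g - g_1$ per factor, I may arrange $\bdd \ell_A \subset T_A$ and $\bdd \ell_B \subset T_B$ in the notation of Lemma \ref{lemma:eyeglass3o}.

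At this stage the bridge $v$ of each factor connects $T_A$ to $T_B$ and so crosses $c_{g_1}$ an odd number of times. To force exactly one crossing, I would perform further $\bdd$-compressions using compressing subdisks of $d$ or $d'$ chosen near adjacent pairs of bridge crossings with $c_{g_1}$; as the remark after Definition \ref{defin:eyeglass} and Figure \ref{fig:eyeglass3} indicate, the bridges of the resulting sub-eyeglasses inherit shortcuts across $c_{g_1}$, and these steps should terminate in a decomposition whose every factor has bridge meeting $c_{g_1}$ exactly once. Lemma \ref{lemma:eyeglass3o} then implies each factor twist is a Powell move, and hence so is the original twist around $\eta$.

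The main obstacle will be the last step. A $\bdd$-compression engineered to shortcut bridge crossings may reintroduce lens-boundary intersections with $c_{g_1}$, so a naive induction on bridge-crossing count alone will not suffice. What is needed is a lexicographic complexity measure combining the lens-boundary intersection count with the bridge-crossing count that strictly decreases under each $\bdd$-compression; designing such a measure and navigating the delicate interplay between lens and bridge modifications is the central technical difficulty, and is very much in the spirit of the choice-management problems highlighted in the paper's Concluding Remarks.
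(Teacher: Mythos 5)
The statement you set out to prove is Conjecture~\ref{conj:eyeglass}, which the paper does \emph{not} prove; it is presented as an open problem, explicitly weaker than the Powell Conjecture, and the authors remark in the same section that even this weaker conjecture lies beyond the reach of their methods (see the cautionary example around Figure~\ref{fig:tangency}). The only positive result the paper extracts in this direction is Proposition~\ref{prop:eyeglass5}, which treats the special class of ``short'' eyeglasses, where the bridge is required to be disjoint from one of the two collections of compressing-disk boundaries. So before examining the details, you should know you are attempting an open problem, not reconstructing a proof that exists.

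There are also concrete gaps in the plan that go beyond the ``main obstacle'' you flagged. First, after you have $\bdd$-compressed so that $\bdd\ell_A$ and $\bdd\ell_B$ are both disjoint from $c_{g_1}$, each lies on one side of $c_{g_1}$ --- but there is no reason they must lie on \emph{opposite} sides, and no reason the $A$-lens in particular must land in $T_A$. The subsurfaces $T_A$ and $T_B$ are not an arbitrary labelling: $T_A$ is defined as the genus-$g_1$ side containing $\{\bdd a_1,\dots,\bdd a_{g_1}\}$ and $T_B$ the genus-$g_2$ side containing $\{\bdd b_{g_1+1},\dots,\bdd b_g\}$, so ``exchanging $g_1 \leftrightarrow g - g_1$'' does not let you relocate a factor whose $A$-lens boundary sits in $T_B$ (or whose two lens boundaries sit on the same side of $c_{g_1}$). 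Lemma~\ref{lemma:eyeglass3o} simply does not apply to such a factor, and its proof cannot be invoked. Second, your proposed further $\bdd$-compressions to reduce bridge crossings with $c_{g_1}$ have no candidate compressing disks: once $\ell_A$ is disjoint from $d$ and $\ell_B$ is disjoint from $d'$, there are no outermost arcs of $\ell_A\cap d$ or $\ell_B\cap d'$ left to compress along, and the bridge $v$ is a 1-complex, not a lens, so the remark following Definition~\ref{defin:eyeglass} gives you nothing to $\bdd$-compress. The bridge reduction in Lemma~\ref{lemma:eyeglass3o} is accomplished by Lemma~\ref{lemma:braid2} (braid/slides of standard summands), which is a quite different mechanism available only in the standard configuration. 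Finally, the termination problem you correctly identify as ``the central technical difficulty'' is not a loose end one expects to patch: designing the complexity that governs how $\bdd$-compressions interact with bridge position is precisely the sort of choice-management issue the paper acknowledges it could not resolve, which is why the statement remains a conjecture.
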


Of course any eyeglass twist is a Goeritz element, so this conjecture is weaker than the Powell Conjecture.  An affirmative solution would bode well for an affirmative solution of the Powell Conjecture as well.  A weaker but perhaps still challenging conjecture is this:

\begin{conj}  Any eyeglass twist on $T_g$ is Goeritz conjugate to a Powell move. %\label{conj:eyeglass}
\end{conj}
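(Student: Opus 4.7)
The strategy is to Goeritz-conjugate the eyeglass $\eta = \ell_a \cup v \cup \ell_b$ into a configuration already known, via Lemma \ref{lemma:eyeglass3o}, to give a Powell move. Observe first that the two lenses, having disjoint boundaries in $T_g$, form a weakly reducing pair $(\ell_a, \ell_b)$ for the splitting $S^3 = A \cup_{T_g} B$.

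First I would extend $(\ell_a, \ell_b)$, via a Casson-Gordon style maximization, to a complete non-separating weakly reducing pair $\{a_1 = \ell_a, a_2, \dots, a_{g_1}\}$ in $A$ and $\{b_{g_1+1} = \ell_b, \dots, b_g\}$ in $B$ (re-shuffling disks as needed to achieve the non-separating condition). A simple closed curve $c$ in the resulting complementary planar surface that separates the two boundary collections bounds disks in both handlebodies and so determines a reducing sphere $S \subset S^3$ for $T_g$ separating $\ell_a$ from $\ell_b$. Waldhausen's theorem applied to each side of $S$ then produces a homeomorphism $h\colon (S^3, T_g, c) \to (S^3, T_g, c_{g_1})$; this $h$ is a Goeritz element, and by construction $h(\ell_a) \subset A$ has boundary in $T_A$ while $h(\ell_b) \subset B$ has boundary in $T_B$, in the notation of Section \ref{sect:weak}.

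Conjugation by $h$ sends $E_\eta$ to $E_{h(\eta)}$, whose lenses now satisfy the hypothesis of Lemma \ref{lemma:eyeglass3o}. All that remains is to normalize the bridge $h(v) \subset T_g$ so that it crosses the standard separating curve $c_{g_1}$ exactly once. The geometric intersection number $|h(v) \cap c_{g_1}|$ is odd, say $2k + 1$; if $k = 0$ we apply Lemma \ref{lemma:eyeglass3o} and are done. When $k \geq 1$ the plan is to exploit braid moves (Lemma \ref{lemma:braid2}) together with standard bubble moves to unhook consecutive oppositely-oriented intersections of $h(v)$ with $c_{g_1}$: each such pair cuts off a subarc of $h(v)$ whose endpoints lie on the same side of $c_{g_1}$, and one hopes to pull the subarc across a standard summand contained in a suitable bubble, reducing $|h(v) \cap c_{g_1}|$ by two. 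Iterating produces a Goeritz-equivalent eyeglass whose bridge crosses $c_{g_1}$ exactly once.

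The hard part will be this last step, the bridge reduction. A bridge that crosses $c_{g_1}$ three or more times may be entangled with the standard summands on both sides in ways that resist unhooking by bubble or braid moves alone, and one may be forced to use further eyeglass twists whose own status (Powell or not) is precisely what is being investigated---so a careful induction on $g$ seems essential, with the genus-$3$ case (Section \ref{sect:genus3}) as base. A second, more foundational, obstacle is that the extension chosen in Step~1 is not canonical; the independence of the outcome from this choice is exactly the difficulty flagged in the concluding remarks, and a natural way to sidestep it is to work inductively in $g$, assuming the full Powell Conjecture in every genus strictly less than $g$, which is the regime in which Corollary \ref{cor:weakreduce} makes the reducing sphere in Step~2 well-defined up to Powell equivalence.
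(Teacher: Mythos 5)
This statement is an open conjecture in the paper; no proof is offered, so there is nothing to compare your argument against. What you have written is a sketch of the natural first attack, and you yourself correctly flag that it is not yet a proof.

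The genuine gap is exactly where you place it: the bridge reduction. You observe that after conjugation you need $|h(v)\cap c_{g_1}|=1$ to invoke Lemma~\ref{lemma:eyeglass3o}, and you propose to reduce $|h(v)\cap c_{g_1}|$ by unhooking pairs of oppositely oriented crossings via bubble and braid moves. But an oppositely-oriented pair cutting off a subarc of $h(v)$ on one side of $c_{g_1}$ does not in general cut off an unobstructed region through which that subarc can be pushed --- the subarc may link the standard summands on that side (or the other lens, or other parts of the bridge), and the bubble you would like to slide it through may be blocked. This is precisely the regime where the paper's only partial result, Proposition~\ref{prop:eyeglass5}, applies, and note how restrictive its hypotheses are: the bridge must avoid \emph{all} of $\{\bdd a_1,\dots,\bdd a_{g_1}\}$ or \emph{all} of $\{\bdd b_{g_1+1},\dots,\bdd b_g\}$, a ``short eyeglass'' condition your generic $h(\eta)$ has no reason to satisfy. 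Your second flagged obstacle --- that the Casson--Gordon extension of $(\ell_a,\ell_b)$ is not canonical, so the reducing sphere and hence the class of $h$ is not pinned down --- is the same difficulty the concluding remarks dwell on, and your proposed escape (induct on $g$ assuming the full Powell Conjecture below $g$) would of course render the present conjecture moot in the inductive setting, so it cannot be the mechanism here. In short: your proposal identifies the correct target lemma and the correct obstacles, but the ``one hopes to pull the subarc across a standard summand'' step is the conjecture, not a step toward proving it.
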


That is, given an eyeglass twist $\rho: (S^3, T_g) \to (S^3, T_g)$ there is a homeomorphism $h: (S^3, T_g) \to (S^3, T_g)$ so that $h^{-1}\rho h: (S^3, T_g) \to (S^3, T_g)$ is a Powell move.  

Observe the following cautionary example about eyeglass twists:  In the argument of Section \ref{sub:annular} a crucial weakly reducing pair that may well appear can be viewed as follows:  One of the pair, $b \subset B$ say, is an innermost disk of $B \cap S$ for some level sphere $S$; $b$ lies inside another disk $D$ that is also bounded by a curve of $S \cap T$, a curve to which $b$ is tangent.  Finally, the complement of $b$ in $D$  lies entirely in $A$ and becomes the disk $a$ when the point of tangency is resolved.  See Figure \ref{fig:tangency}.

     \begin{figure}[ht!]
\labellist
\small\hair 2pt
\pinlabel  $b$ at 100 75
\pinlabel  $a$ at 135 140
\pinlabel  $\bdd D$ at 45 110
\endlabellist
    \centering
    \includegraphics[scale=0.75]{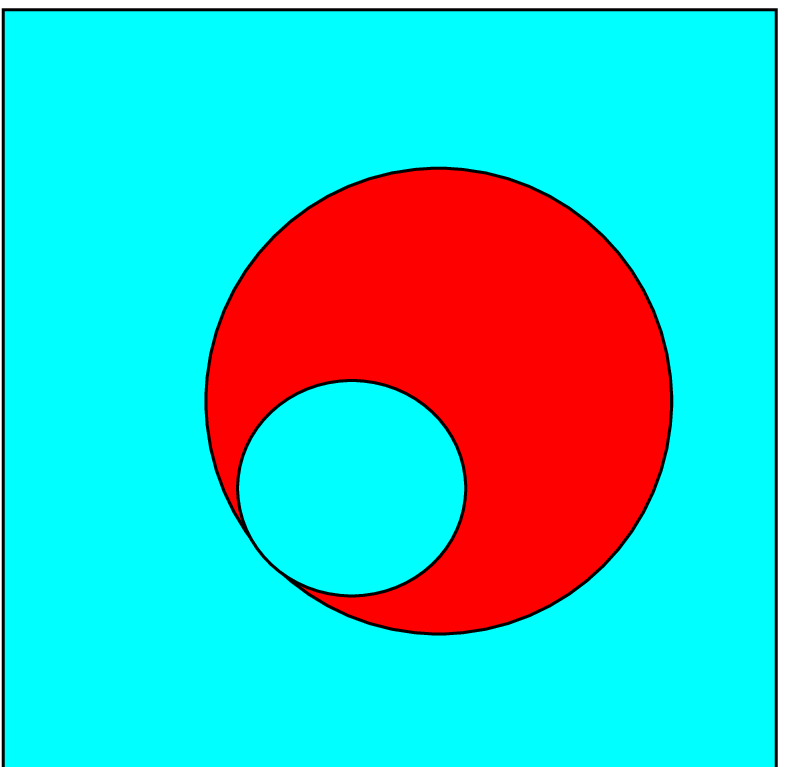}
\caption{} 
 \label{fig:tangency}
    \end{figure}

Now imagine rolling the disk $b$ around the inside of $D$ so that the point of tangency circumnavigates $\bdd D$.  It is easy to see that this is a Goeritz move and only a bit harder to see that it is an eyeglass twist of $b$ around $a$.  Yet, the methods of Section \ref{sub:annular} would provide the  same pair of disks $a, b$ throughout the Goeritz move.  This suggests that, as it stands, Section \ref{sub:annular} is not in itself powerful enough to resolve even the weaker Conjecture \ref{conj:eyeglass}.  Other tools may be needed to solve the problem for at least some types of eyeglass twists.

In that spirit, we conclude with one rather limited class of eyeglass twists which we are able to show are Powell moves.  These are described using the notation that precedes Lemma \ref{lemma:eyeglass4}.  
 
\begin{defin} \label{defin:eyeglass5} An eyeglass $\eta \subset T$ satisfying the following conditions will be called a {\em short} eyeglass.  
\begin{itemize}
\item There is a complete non-separating pair of weakly reducing disk collections $\{a_1, ..., a_{g_1}\}$ and $\{b_{g_1+1}, ..., b_g\}$ for $T \subset S^3$. 
\item There is a simple closed curve $c \subset T$ that separates the two sets $\{\bdd a_1, ..., \bdd a_{g_1}\}$ and $\{\bdd b_{g_1+1}, ..., \bdd b_g\}$ in $T$.
\item The lens $a \subset A$ of $\eta$ has $\bdd a \subset T_A$.
\item  The lens $b \subset B$ of $\eta$ has $\bdd b \subset T_B$.
\item The interior of the bridge $v \subset \eta$ intersects at least one of $T_A$ or $T_B$ entirely in $P_A$ or $P_B$.  
\end{itemize}
\end{defin}

\begin{prop} \label{prop:eyeglass5} Suppose $\eta \subset T$ is a short eyeglass as above, with $c \subset T$ the separating curve. Then an eyeglass twist along $\eta$ does not change the Powell equivalence class of any homeomorphism $h:(S^3, T, c) \to (S^3, T_g, c_{g_1})$.
\end{prop}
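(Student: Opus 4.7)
The plan is to reduce to Lemma \ref{lemma:eyeglass4} by induction on the positive odd integer $n := |v \cap c|$. The base case $n = 1$ is Lemma \ref{lemma:eyeglass4} itself.

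For the inductive step assume $n \geq 3$. By the short-eyeglass hypothesis, and by symmetry between the two sides, we may assume that the interior of $v$ meets $T_A$ only in $P_A$. Label the crossings $p_1, \ldots, p_n$ in order along $v$, and let $v_0, v_1, \ldots, v_n$ be the resulting sub-arcs, so that $v_{2i} \subset P_A$ for $0 \leq i \leq (n-1)/2$ and the odd-indexed arcs $v_{2i+1}$ lie in $T_B$. I would first attempt the most direct reduction: if some $v_{2i}$ with $i \geq 1$ cobounds a disk in $P_A$ with a sub-arc of $c$ (that is, a disk containing none of the boundary components $\bdd a_j$ of $P_A$), then an isotopy of $v$ in $T$ rel $\bdd v$ and disjoint from the lenses pushes $v$ across this disk, eliminating the crossings $p_{2i}$ and $p_{2i+1}$ without changing $E_\eta$. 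The new bridge is still short, and the inductive hypothesis applies.

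If no such $v_{2i}$ is available, I would invoke the $\bdd$-compression break-up of the Remark following Definition \ref{defin:eyeglass} applied to the lens $a$. Specifically, choose a $\bdd$-compressing arc $\beta \subset T_A$ whose endpoints lie on $\bdd a$ and which runs close to the initial sub-arc $v_0$ and back, so that $\bdd$-compression of $a$ along $\beta$ splits $a$ into disks $a', a''$ and the twist $E_\eta$ into a product $E_{\eta_1} \circ E_{\eta_2}$, where $\eta_1 = a' \cup v^{(1)} \cup b$ and $\eta_2 = a'' \cup v^{(2)} \cup b$. By choice of $\beta$, the bridge $v^{(1)}$ is (isotopic to) $v_0$ extended by a short arc along $\bdd a$, hence crosses $c$ exactly once, so $E_{\eta_1}$ is handled by Lemma \ref{lemma:eyeglass4}; and $v^{(2)}$ is obtained from $v$ by replacing its initial segment $v_0$ with $\beta$ and a small arc of $\bdd a$, so that its first crossing with $c$ at $p_1$ is eliminated and $|v^{(2)} \cap c| \leq n - 2$. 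Both new eyeglasses remain short because $\beta \subset T_A$ and, in fact, $\beta \subset P_A$ when $\bdd a$ has been arranged to avoid the $a_j$. The inductive hypothesis finishes the argument.

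The principal obstacle is ensuring that such a $\beta$ can be chosen so that the two resulting eyeglasses are genuinely short and the crossing count strictly decreases in both factors. If $\bdd a$ is not disjoint from the $a_j$ in $T_A$ to begin with, I expect to need a preparatory reduction, in the spirit of Lemma \ref{lemma:eyeglass3o}, that first $\bdd$-compresses $a$ along outermost arcs of $a \cap (a_1 \cup \cdots \cup a_{g_1})$ to arrange $\bdd a \subset P_A$; and similarly for $b$ if the $T_B$-side argument is being used. It may further be necessary to perform preliminary braid moves inside $P_A$ (which preserve the Powell equivalence class of $h$ by the mixed-braid analysis in the proof of Theorem \ref{thm:cindep}) to shepherd the $\bdd a_j$-punctures away from the region bounded by $v_{2i}$ and a sub-arc of $c$, so that the ``no disk'' obstruction to the first reduction is resolved and the direct isotopy becomes available. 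The delicate bookkeeping of how the short-eyeglass hypothesis is propagated through each of these preparatory moves is, I expect, the technical heart of the proof.
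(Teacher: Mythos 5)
There is a genuine gap in the inductive step. You begin correctly: induction on $n=|v\cap c|$ with base case Lemma \ref{lemma:eyeglass4}, and the observation that if some $v_{2i}$ cuts off from $P_A$ a disk containing no punctures $\bdd a_j$, then a direct isotopy of $v$ across that disk kills two crossings. The difficulty is entirely in the case you defer to the end --- when punctures \emph{do} sit between $v_{2i}$ and the sub-arc $c_A\subset c$ --- and neither of your proposed remedies actually resolves it. The $\bdd$-compression route cannot reduce the crossing count: any compressing arc $\beta$ with $\bdd\beta\subset\bdd a$ and $\beta\subset T_A$ is disjoint from $c$, so when the Remark after Definition \ref{defin:eyeglass} splits $E_\eta$ into $E_{\eta_1}E_{\eta_2}$, the two new bridges still run from curves in $T_A$ (namely $\bdd a',\bdd a''\subset T_A$) to $\bdd b\subset T_B$ along essentially the same track as $v$, and each still meets $c$ in $n$ points; the crossing at $p_1$ does not disappear. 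Likewise, braid moves supported in $P_A$ cannot shepherd a puncture $\bdd a_j$ out of the region bounded by $v_{2i}\cup c_A$, because escaping that region requires crossing either $c=\bdd P_A$ (not allowed for a move inside $P_A$) or $v$ itself (which destroys the very bridge you are tracking).

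What the paper does instead is to manufacture a \emph{new} $A$-lens out of a piece of the bridge together with a piece of $c$, rather than modifying $a$. Pick an outermost arc $v_A$ of $v\cap P_A$ not incident to $\bdd a$, cutting off a planar piece of $P_A$ whose interior misses $v$; let $c_A\subset c$ be the complementary boundary arc. Then $a_c:=v_A\cup c_A$ is a simple closed curve bounding a disk in $A$ (cap the $\bdd a_j$'s inside it with copies of the $a_j$). Take $\eta'$ to have lenses $a_c$ and $b$, with bridge the terminal segment of $v$ beyond $v_A$; this bridge meets $c$ in at least two fewer points, and $\eta'$ is still short, so the inductive hypothesis applies to $E_{\eta'}$. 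Crucially, $E_{\eta'}$ sweeps the punctures formerly trapped between $v_A$ and $c_A$ to the other side of $v_A$ --- a move that does go through $T_B$ and back, which is exactly why a purely $P_A$-supported braid could not do it --- and after that, $v_A$ can be isotoped across $c$, decreasing $|v\cap c|$ by two. This construction of the auxiliary lens $a_c$ from $v_A\cup c_A$ is the idea missing from your proposal.
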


The central difference between this lemma and Lemma \ref{lemma:eyeglass4} is that we no longer require the bridge $v$ to intersect $c$ in exactly one point; on the other hand, we do require that the bridge of the eyeglass be disjoint either from $\{\bdd a_1, ..., \bdd a_{g_1}\}$ or from $\{\bdd b_{g_1+1}, ..., \bdd b_g\}$

\begin{proof}    The proof is by induction on $|v \cap c|$; the case $|v \cap c| = 1$ is Lemma \ref{lemma:eyeglass4}.  

Suppose with no loss of generality that $v \cap T_A \subset P_A$ and that $|v \cap c| > 1$.  Consider the arcs of $v \cap P_A$ that are not incident to $\bdd a$.  Each such divides $P_A$ into two sub-planar surfaces, one of which cuts off a subarc of $c$ intersecting $v$ in an even number of points.  By picking an outermost such arc we may find a properly embedded segment $v_A \subset v$ that cuts off a planar surface whose interior is disjoint from $v$.  The union of $v_A$ and the subarc $c_A \subset c$ it cuts off is a simple closed curve $a_c$ which bounds a disk in $A$.  Create an eyeglass $\eta'$, one of whose lenses is $a_c$, the other is $b \subset B$ and whose bridge intersects $c$ in at least two fewer points.  By induction, a twist around $\eta'$ does not change the Powell equivalence class of $h$, and it vacates entirely everything in $P_A$ that lies between $v_A$ and $c_A$, putting it on the other side of $v_A$. Then $v_A$ can be isotoped across $c$, reducing $|v \cap c|$.
\end{proof}

\appendix

\section{Refining the Rieck argument} \label{appendix}

We begin with a well-known result.  Suppose $X$ is a compact topological space and $f: X \to \Rrr$ is continuous.  Then $f$ has a unique minimal value $r \in \Rrr$.  Now let $Z$ be a topological space, $\Rrr^X$ denote the space of continuous functions from $X$ to $\Rrr$ with the compact-open topology and consider a function $f: Z \to \Rrr^X$.  Denote the image of any $z \in Z$ by $f_z: X \to \Rrr$.  Then $f$ gives rise to a function $r: Z \to \Rrr$ by defining $r(z) = min\{f_z(x) | x \in X\}$.  

\begin{prop}  If $f$ is continuous then so is $r$.
\end{prop}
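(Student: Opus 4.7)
The plan is to exhibit, around any $z_0 \in Z$ and any $\eee > 0$, a neighborhood of $z_0$ on which $|r - r(z_0)| < \eee$. The idea is to package this neighborhood as the preimage under $f$ of an open set in $\Rrr^X$, and then use the definition of the compact-open topology directly.

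First I would fix $z_0 \in Z$, set $r_0 = r(z_0)$, and choose $x_0 \in X$ achieving the minimum, i.e.\ $f_{z_0}(x_0) = r_0$; such an $x_0$ exists because $X$ is compact and $f_{z_0}$ is continuous. Given $\eee > 0$, consider the two subbasic open sets of $\Rrr^X$
\[
N_+ \;=\; [\{x_0\},\, (-\infty, r_0 + \eee)], \qquad N_- \;=\; [X,\, (r_0 - \eee, \infty)],
\]
where $[K, U] = \{ g \in \Rrr^X : g(K) \subset U\}$ is the standard subbasis for the compact-open topology (note $\{x_0\}$ and $X$ are both compact). Then $f_{z_0} \in N_+$ because $f_{z_0}(x_0) = r_0 < r_0 + \eee$, and $f_{z_0} \in N_-$ because $f_{z_0}(x) \geq r_0 > r_0 - \eee$ for every $x \in X$. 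Hence $N \coloneqq N_+ \cap N_-$ is an open neighborhood of $f_{z_0}$ in $\Rrr^X$.

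Next I would observe that for any $g \in N$ we have $\min_X g < r_0 + \eee$ (testing the minimum at $x_0$) and $\min_X g > r_0 - \eee$ (by the uniform lower bound from $N_-$), so $|\min_X g - r_0| < \eee$. By continuity of $f$, the set $U = f^{-1}(N)$ is open in $Z$, contains $z_0$, and satisfies $|r(z) - r(z_0)| < \eee$ for every $z \in U$. Since $z_0$ and $\eee$ were arbitrary, $r$ is continuous.

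The argument is short because the compact-open topology is tailor-made for this situation: compactness of $X$ lets us use the whole space $X$ as the ``compact set'' in a subbasic neighborhood, giving a uniform lower bound with no finite-cover argument, while the single-point set $\{x_0\}$ gives the upper bound for free. The only conceptual point worth checking is that $X$ itself (not just points of $X$) is allowed as the compact set in the subbasic definition, which is true by compactness of $X$. No local compactness of $X$ or adjunction $Z \times X \to \Rrr$ is needed.
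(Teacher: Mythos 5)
Your proof is correct. The paper gives two options but spells out neither: its ``standard proof'' invokes the equivalence of compact-open and uniform-convergence topologies when $X$ is compact, and then asserts the result is obvious; it also notes that there is ``a proof directly from the definition of compact-open topology that is analogous to (but much easier than) the proof of Proposition \ref{prop:H1graph}.'' You have written out that second, unstated proof in full, and your decomposition into $N_+$ (the singleton $\{x_0\}$ controlling the upper bound on $\min_X g$) and $N_-$ (all of $X$ controlling the uniform lower bound) is exactly the $H_0$ analogue of the two claims in the paper's proof of Proposition \ref{prop:H1graph}, where Claim 1 uses the compact set $f_{z_0}^{-1}(-\infty, r+\eee]$ and Claim 2 uses its complement. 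The uniform-convergence route is shorter to state once one has the topological equivalence in hand, but your direct argument is self-contained and, as the paper itself hints, is the version that actually generalizes to the $H_1$ statement that follows; so it is arguably the more instructive proof in context.
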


\begin{proof} The standard proof is this:  Since $X$ is compact, the compact-open topology coincides with the topology of uniform convergence, in which the conclusion is obvious.  There is also a proof directly from the definition of compact-open topology that is analogous to (but much easier than) the proof of Proposition \ref{prop:H1graph} below.
\end{proof}

Notice that the function $r$ could equivalently (but more obscurely) be defined this way:  $r(z)$ is the least $r \in \Rrr$ so that the image of $H_0(f_z^{-1}(- \infty, r]) \to H_0(X)$ is non-trivial.  Viewed in this way, the proposition generalizes.  In particular, we will be interested in an analogous statement for $H_1$.  

For this version, assume also that $H_1(X)$ is non-trivial.  Then given a continuous
 function $f: X \to \Rrr$ and a value $y \in \Rrr$, let $G_y \subset H_1(X)$ be the image under the inclusion-induced map of $H_1(f^{-1}(-\infty,y]) \to H_1(X)$.  There are obvious properties:  
\begin{itemize}
\item for $y$ below the minimum of $f(X)$, $G_y$ is trivial,  
\item $y \leq y' \implies G_y \subset G_{y'}$,  and
\item  for $y$ above the maximum of $f(X)$, $G_y = H_1(X) \neq 0$. 
\end{itemize}  
So it makes sense in this situation to define a value \[r = \inf\{y \in \Rrr| G_y \neq 0\}.\footnote{Notice that $G_r$ may or may not be trivial.  For an example where $G_r$ is trivial, let $X$ be the closure of an $\epsilon$-neighborhood of the Polish circle $C$ in the plane, and $f(x)$ be the distance from $x$ to $C$.  Then $r = 0$ and $G_r = H_1(C) = 0.$}\]  With this definition then, in analogy to the discussion above, a function $f: Z \to \Rrr^X$ gives rise to a function $r: Z \to \Rrr$ by setting $r(z)$ to be the value $r$ just defined for the function $f_z$.  That is, \[r(z) = \inf\{y \in \Rrr|  image(H_1(f_z^{-1}(-\infty,y]) \to H_1(X)) \neq 0\}\]

\begin{prop}  \label{prop:H1graph}
Since $f$ is continuous, so is $r$.
\end{prop}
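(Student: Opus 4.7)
The plan is to show that $r$ is both upper and lower semicontinuous at an arbitrary $z_0 \in Z$; together these give continuity. The key enabling fact is that $X$ is compact, so the compact-open topology on $\Rrr^X$ coincides with the topology of uniform convergence. Consequently, continuity of $f \colon Z \to \Rrr^X$ says exactly that for every $\epsilon > 0$ there is a neighborhood $U$ of $z_0$ on which $\sup_{x \in X}|f_z(x) - f_{z_0}(x)| < \epsilon$, and this uniform estimate is what drives both halves of the argument.

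For upper semicontinuity, write $r_0 = r(z_0)$ and fix $\epsilon > 0$. The infimum definition supplies some $y < r_0 + \epsilon/2$ for which the image $G_y^{z_0} := \mathrm{image}(H_1(f_{z_0}^{-1}(-\infty, y]) \to H_1(X))$ is nontrivial, so one may pick a singular $1$-cycle $\alpha$ in $f_{z_0}^{-1}(-\infty, y]$ representing a nontrivial class. The support $K$ of $\alpha$, the union of the images of its finitely many singular $1$-simplices, is compact. Shrinking $U$ so that $\sup_X |f_z - f_{z_0}| < \epsilon/2$ on $U$, for each $z \in U$ and each $x \in K$ one gets $f_z(x) \leq y + \epsilon/2 < r_0 + \epsilon$. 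Hence $\alpha$ remains a $1$-cycle in $f_z^{-1}(-\infty, r_0 + \epsilon)$ with the same nontrivial image in $H_1(X)$, so $r(z) < r_0 + \epsilon$.

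For lower semicontinuity I would argue by contradiction. Suppose, after further shrinking $U$ so that $\sup_X |f_z - f_{z_0}| < \epsilon/4$ on $U$, that some $z \in U$ satisfied $r(z) \leq r_0 - \epsilon$. Applying the infimum definition to $r(z)$, pick $y < r_0 - \epsilon/2$ with $G_y^z \neq 0$ and choose a nontrivial cycle $\alpha$ supported on a compact $K \subset f_z^{-1}(-\infty, y]$. For $x \in K$ the uniform bound gives $f_{z_0}(x) \leq y + \epsilon/4 < r_0 - \epsilon/4$, so $\alpha$ is a cycle in $f_{z_0}^{-1}(-\infty, r_0 - \epsilon/4]$ whose image in $H_1(X)$ is nontrivial. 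Since $r_0 - \epsilon/4 < r_0$, this contradicts the characterization of $r_0$ as the infimum of the $y$ with $G_y^{z_0} \neq 0$.

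The only substantive step, and the main obstacle to my mind, is the recognition that the support $K$ of a singular cycle is a compact subset of $X$, so that a uniform bound on $|f_z - f_{z_0}|$ over all of $X$ automatically passes to a bound on the level at which $\alpha$ lives as a cycle. Beyond this, the argument is a purely formal generalization of the classical proof that the minimum of a continuously-parameterized family of continuous functions on a compact space depends continuously on the parameter; indeed the same proof, unchanged, would handle $H_n$ for any $n$, since all that is used is that a singular chain has compact support.
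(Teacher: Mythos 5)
Your proof is correct. Both arguments establish continuity at $z_0$ by paired upper and lower estimates, and both rest on the same principle: since $f_z$ and $f_{z_0}$ are close as functions on the compact $X$, the filtration level at which a persistent $H_1$ class first appears cannot shift by more than the amount of closeness. Where you differ from the paper is the implementation. The paper stays entirely at the level of subspaces and inclusions: from the sub-basic compact-open neighborhood $\{z : f_z(f_{z_0}^{-1}(-\infty, r+\epsilon]) \subset (-\infty, r+2\epsilon)\}$ it derives the containment $f_{z_0}^{-1}(-\infty, r+\epsilon] \subset f_z^{-1}(-\infty, r+2\epsilon)$, through which the nontrivial $H_1$-image factors; the lower bound is the mirror image, run through the complementary superlevel sets and yielding a reversed inclusion along which triviality is inherited. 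You instead pass to the equivalent uniform-convergence topology (which the paper explicitly notes is available --- it invoked exactly that shortcut for the preceding $H_0$ proposition but chose not to here), pick an explicit singular $1$-cycle $\alpha$ whose class is nontrivial in $H_1(X)$, and track its compact support, using the uniform bound to see that the support stays inside the relevant sublevel set of the nearby function. Both routes are sound and of equal strength; yours is somewhat more concrete, and as you observe it makes the extension to $H_n$ for arbitrary $n$ transparent, while the paper's version never descends to chain level and works with the compact-open topology as given rather than re-metrizing.
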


\begin{proof}  The proof (not the underlying mathematics) is made a bit more complicated because we we do not know when $G_r$ is trivial.

Pick any $z_0 \in Z$ and any $\eee > 0$.  Let $r = r(z_0)$.  

\medskip
{\bf Claim 1:}  There is an open neighborhood $N$ of $z_0$ so that $z \in N \implies r(z) < r + 2\eee$.   

{\em Proof of Claim 1:} First observe that $f_{z_0}^{-1}(- \infty, r + \eee]$ is closed in $X$, hence compact, so $\{z \in Z| f_z(f_{z_0}^{-1}(- \infty, r + \eee]) \subset (-\infty, r + 2\eee)\}$ is an open neighborhood $N$ of $z_0$.  Put another way, for $z \in N$, \[ f_{z_0}^{-1}(- \infty, r + \eee] \subset f_{z}^{-1}(- \infty, r + 2\eee).\]  We know that the image of $H_1(f_{z_0}^{-1}(- \infty, r + \eee] ) \to H_1(X)$ is non-trivial, and this factors through $H_1(f_{z}^{-1}(- \infty, r + 2\eee)) \to H_1(X)$, so the image of the latter is non-trivial.  Hence $r(z) < r + 2\eee$ as required.  
\medskip

{\bf Claim 2:}  There is an open neighborhood $M$ of $z_0$ so that $z \in M \implies r(z) > r - 2\eee$.   

{\em Proof of Claim 2:} Paralleling the argument in Claim 1, there is an open neighborhood $M$ of $z_0$ so that for $z \in M$, 
\[X - f_{z_0}^{-1}(- \infty, r - \eee) = f_{z_0}^{-1}[r - \eee, \infty) \subset f_{z}^{-1}(r- 2\eee, \infty) = X - f_{z}^{-1}(-\infty, r - 2\eee].\]  Hence 
\[f_{z}^{-1}(-\infty, r - 2\eee] \subset f_{z_0}^{-1}(- \infty, r - \eee)\]
By definition of $r$, $H_1(f_{z_0}^{-1}(- \infty, r - \eee)) \to H_1(X)$ has trivial image; it follows that $H_1(f_{z}^{-1}(- \infty, r- 2\eee]) \to H_1(X)$ has trivial image, so $r(z) > r - 2\eee$ as required.

Hence for $z \in M \cap N$, $|r(z) - r(z_0)| < 2\eee$, showing that $r$ is continuous at $z_0$.  
\end{proof}  

Now return to the  setting of Section \ref{sub:rieck}: $T$ is a genus $g \geq 2$ Heegaard surface in $S^3$ and $T_t, t \in [-1, 1]$ describes a sweep-out of $S^3 = A \cup_T B$ from a spine of $A$ to a spine of $B$. Similarly $S_s, s \in [-1, 1]$ describes the sweep-out of $S^3$ by $2$-spheres, from the south pole no the north pole of $S^3$.  The intersection patterns of the two surfaces give rise to a reduced graphic $\Gamma \subset [-1, 1] \times [-1, 1]$ for which a complementary region $R$ is labelled $E$ if and only if for each $(t, s) \in R$, $T_t \cap S_s$ contains a circle that is essential in $T_t$.   

\begin{prop} \label{prop:graph}
There is a function $r: [-1, 1] \to [-1, 1]$ whose graph $\{(t, r(t))\} \subset [-1, 1] \times [-1, 1]$ is transverse to $\Gamma$ and each region that the graph intersects is labelled $E$.
\end{prop}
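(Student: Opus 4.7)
The plan is to refine the function of Proposition \ref{prop:H1graph} to one whose graph stays inside the $E$-labeled regions. Using the sweep-out identification $T_t = \phi_t(T)$ together with the height function $s\colon S^3\to[-1,1]$, set $f_t = s\circ\phi_t\colon T\to[-1,1]$; this is a continuous family of functions on the fixed surface $T$, so Proposition \ref{prop:H1graph} applies. For each integer $k\in\{1,\ldots,2g\}$ define
\[
r_k(t) \;=\; \inf\bigl\{\,y\in[-1,1] : \dim\,\mathrm{image}\bigl(H_1(f_t^{-1}(-\infty,y];\R)\to H_1(T;\R)\bigr)\geq k\,\bigr\}.
\]
The proof of Proposition \ref{prop:H1graph} extends without change: an inclusion of sublevel sets induces an inclusion of their images in $H_1(T)$, so the rank of the image is monotone under that inclusion. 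The same $\epsilon$-perturbation argument shows each $r_k$ is continuous in $t$; in particular $r_1$ is the function of Proposition \ref{prop:H1graph}.

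Next, I claim $r_1(t)<r_2(t)$ for every $t$. After the generic perturbation already used to construct the reduced graphic $\Gamma$, each $f_t$ is Morse away from finitely many exceptional $t$, and crossing a critical value of $f_t$ attaches a handle of index $0$, $1$, or $2$ to the sublevel set. A $0$-handle contributes only to $H_0$. A $2$-handle fills in a boundary circle that bounds a disk in $T_t$, and such a circle was already zero in $H_1(T)$, so killing it in $H_1$ of the sublevel set cannot reduce the rank of the image in $H_1(T)$. A $1$-handle changes the image rank by at most $+1$. Hence as $y$ increases, the image rank jumps by at most $1$ at each critical value, forcing $r_1(t)<r_2(t)$ at every non-exceptional $t$; the strict inequality then extends to all $t$ by continuity of $r_1$ and $r_2$.

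Now set $r(t)=\tfrac12\bigl(r_1(t)+r_2(t)\bigr)$. This is continuous with $r_1<r<r_2$, and a $C^0$-small smooth perturbation renders $r$ smooth and its graph transverse to $\Gamma$ while preserving these strict inequalities. For each $t$ the rank of the $H_1$-image at $(t,r(t))$ is then exactly $1$: at least $1$ because $r(t)>r_1(t)$, and at most $1$ because $r(t)<r_2(t)$. In an $I_g$ region the sublevel set is a disjoint union of disks, so the image rank is $0$; in an $I_y$ region the sublevel set is $T$ with finitely many disks removed, so the image rank is the full $2g$. Since $g\geq 2$, rank $1$ is incompatible with both labels, and $(t,r(t))$ must lie in an $E$-region, as required.

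The most delicate step is the Morse bookkeeping that establishes $r_1<r_2$: one must verify that each type of handle attachment changes the image of $H_1$ in $H_1(T)$ by at most one generator, and that this is compatible with the genericity perturbations already imposed on the sphere and surface sweep-outs. It is also where the genus hypothesis $g\geq 2$ enters essentially, since one needs $2g>1$ to separate the rank-$1$ stratum where $r$ lives from the full-rank $I_y$ stratum.
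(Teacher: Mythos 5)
Your approach is a genuinely different route from the paper's, but as written it has a gap at the vertices of the graphic.

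The paper's proof takes the single threshold $r(t)$ from Proposition \ref{prop:H1graph}, notes that the sublevel set just below $r(t)$ is planar (trivial $H_1$-image in $T_t$ forbids non-separating curves), and then invokes the \emph{topological} Lemma \ref{lemma:planar}: attaching at most two (or, for Proposition \ref{prop:graphsum}, three) $1$-handles to a planar surface either keeps genus $\leq 1$ or produces a non-separating boundary component, and $g\geq 2$ rules out the first alternative. This handles generic $t$ and graphic vertices in a single stroke.

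Your argument replaces that lemma with a homological-rank count: you define the thresholds $r_k$, claim $r_1(t)<r_2(t)$ for every $t$, take the midpoint $r=\tfrac12(r_1+r_2)$, and argue that rank exactly $1$ is incompatible with both $I_g$ (rank $0$) and $I_y$ (rank $2g$). The observation that rank in $\{1,\dots,2g-1\}$ forces label $E$ is correct and a nice framing of why the mechanism works. But the step ``$r_1(t)<r_2(t)$ at every non-exceptional $t$; the strict inequality then extends to all $t$ by continuity'' is a logical error: a limit of strict inequalities is only a weak inequality. And the exceptional $t$ are exactly the vertices of the reduced graphic, where two saddle tangencies occur at the same height, so two $1$-handles are attached simultaneously and the image rank can jump from $0$ directly to $2$. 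Nothing rules out $r_1(t_0)=r_2(t_0)$ at such a $t_0$. In that event your $r(t_0)$ sits on the vertex, and after a generic $C^0$-small perturbation the graph can pass \emph{below} it, into the rank-$0$ region $I_g$, and the argument fails.

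The fix is essentially to return to the paper's choice $r = r_1 + \epsilon$ and strengthen your rank bound to cover vertices: even when two (or, in the annular case, three) handles are attached at once, the rank at height $r_1 + \epsilon$ is at most $2$ (respectively $3$), which is still strictly less than $2g$ when $g\geq 2$, so the region is $E$. Notice that this is where the hypothesis $g\geq 2$ does its real work — not merely to separate rank $1$ from rank $2g$, but to absorb a jump of size up to $2$ (or $3$) at a vertex. That is precisely the content of the paper's Lemma \ref{lemma:planar}, which your midpoint argument tried to bypass but cannot.
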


\begin{proof} Apply Proposition \ref{prop:H1graph} to $X = T$ and $Z = [-1, 1]$ by taking for $f_t$ the height function $s$ restricted to $T_t \subset S^3$.  Denote by $T_{t, (-\infty s]}$ the subsurface of $T_t$ that lies in $\cup_{s' \leq s} S_{s'}.$, that is, in the part of $S^3$ that is south of the sphere $S_s$.     We then have a function $r:[-1, 1] \to [-1, 1]$ with the property that for values of $s < r(t)$, none of the first homology of $T_{t, (-\infty, s]}$ persists into all of $T_t$, but for any values of $s >  r(t)$ some does.  It follows that the topology of $T_{t, (-\infty s]}$ changes as $s$ rises through $r(t)$, so the graph of $r(t)$ must lie in the $1$-skeleton of the reduced graphic.  

We can say more:  For a generic value of $t$, the point $(t, r(t))$ will lie in an edge of the reduced graphic, so at that point there is a saddle tangency of $T_t$ with $S_{r(t)}$.  That is, $T_{t, (-\infty, r(t) + \eee]}$ is obtained from $T_{t,(-\infty, r(t) - \eee]}$ by attaching a $1$-handle.  

At a non-generic value of $t$, the point $(t, r(t))$ will be a vertex in the graphic.  In that case, one can get from the region just below $(t, r(t))$ to the region just above by passing through two edges in the graphic, so $T_{t, (-\infty, r(t) + \eee]}$ is obtained from $T_{t, (-\infty, r(t) - \eee]}$ by attaching two $1$-handles.  By definition of $r(t)$, $T_{t, (-\infty, r(t) - \eee]}$ corresponds to a subsurface of $T_t$ whose first homology is trivial in all of $T_t$.  In particular, $T_{t, (-\infty, r(t) + \eee]}$ contains no non-separating simple closed curves i. e., $T_{t, (-\infty, r(t) + \eee]}$ is planar.    

We now invoke an easy lemma:

\begin{lemma} \label{lemma:planar}

Suppose $P$ is a compact possibly disconnected planar surface and $P_+$ is the surface obtained from $P$ by attaching no more than three $1$-handles.  Then either genus $P_+ \leq 1$ or some boundary component of $P$ is non-separating in $P_+$.  
\end{lemma}

\begin{proof}  If any of the $1$-handles has its ends attached to different components of $P$, it is wasted:  The resulting surface $P'$ is still planar, and the new boundary component created is non-separating in $P_+$ if and only if at least one of the boundary components to which it is attached is non-separating in $P_+$.  So we could juat replace $P$ with $P'$ in the argument, but now with one fewer handle to attach.

So we may as well assume that all $1$-handles are attached to the same component of $P$ and so we can take $P$ to be connected.  In that case, if any $1$-handle has exactly one end on any component of $\bdd P$ then that component becomes non-separating, finishing the argument.  So we may as well assume that no $1$-handle has a single end on an original boundary component of $P$. 

Now imagine attaching the $1$-handles sequentially, say $h_1, h_2, h_3$.  The first handle $h_1$ must have both ends on the same component of $\bdd P$ so the result is still a planar surface $P'$, with exactly two components of $\bdd P'$ not appearing as components of $\bdd P$.  If, after attaching $h_2$, the surface is still planar, attaching $h_3$ can raise the genus to at most $1$, completing the proof.  So we need to have the attachment of $h_2$ raise the genus, so it must connect the two new boundary components of $P'$.  The resulting surface $P''$ now has genus $1$ and only a single boundary component not among the components of $\bdd P$.  It follows that both ends of $h_3$ must lie on the same component of $\bdd P''$, either the new one or one of the original components of $\bdd P$.  In any case, attaching $h_3$ does not raise the genus any further, completing the proof.
\end{proof}

Following Lemma \ref{lemma:planar}, if every boundary component of $T_{t, (-\infty, r(t) + \eee]}$ is inessential in $T_t$ then $genus(T_t) \leq 1.$  So under the hypothesis that $genus(T) \geq 2$ the function $r(t) + \eee$ for small enough $\eee$ will be the function we seek: its graph in $[-1, 1] \times [-1, 1]$ will lie entirely in the regions labelled $E$.   
\end{proof}

The argument is readily enlarged to include the third parameter $\te$.  That is,

\begin{prop} \label{prop:graphsum} In the parameter space $[-1, 1] \times [-1, 1]\times S^1$ as described in Section \ref{sub:rieck}, there is a function $r: I \times S^1 \to I$ so that the graph $(t, r(t, \theta) + \eee, \theta) \subset [-1, 1] \times [-1, 1] \times S^1$ of $r + \epsilon$ is transverse to the graphic and intersects only those regions of the graphic in which $S_s \cap T_{t, \te}$ contains circles that are essential in $T_{t, \te}$.
\end{prop}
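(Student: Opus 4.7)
The plan is to mimic the proof of Proposition~\ref{prop:graph} with the parameter space enlarged from $[-1,1]$ to $Z = [-1,1] \times S^1$, and then invoke a small perturbation to recover transversality. First I would apply Proposition~\ref{prop:H1graph} with $X = T$ and $Z = [-1,1] \times S^1$, taking $f_{(t,\theta)} : T \to \Rrr$ to be the height function $s$ composed with the parameterization $T \to T_{t,\theta} \subset S^3$ supplied by $\ppap$. Since Proposition~\ref{prop:H1graph} is stated for an arbitrary topological parameter space, it immediately produces a continuous function $r : [-1,1] \times S^1 \to [-1,1]$ characterized by
\[
  r(t,\theta) = \inf\bigl\{\, s \,\big|\, \mathrm{image}\bigl(H_1(T_{t,\theta,(-\infty,s]}) \to H_1(T_{t,\theta})\bigr) \neq 0 \,\bigr\},
\]
where $T_{t,\theta,(-\infty,s]}$ is the part of $T_{t,\theta}$ lying south of $S_s$.

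Second, exactly as in the proof of Proposition~\ref{prop:graph}, the graph of $r$ must lie in the $2$-skeleton of the reduced graphic $G_3$, since along the graph the topology of the sublevel surface changes. At any point $(t,\theta)$ the surface $T_{t,\theta,(-\infty, r(t,\theta)-\eee]}$ has trivial image in $H_1(T_{t,\theta})$, hence is planar; passing to $T_{t,\theta,(-\infty, r(t,\theta)+\eee]}$ attaches some bounded number of $1$-handles. By the genericity analysis of Section~\ref{sub:theta}, the saddle sheets of $G_3$ intersect transversely in double curves and isolated triple points, so at most three saddle tangencies occur simultaneously and therefore at most three $1$-handles are attached at once.

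Third, Lemma~\ref{lemma:planar} is exactly designed for this step: a planar surface with at most three $1$-handles attached either has genus $\leq 1$, or some original boundary component becomes non-separating in the resulting surface. Because $g \geq 2$ rules out the first alternative, a boundary component of $T_{t,\theta,(-\infty, r(t,\theta)+\eee]}$ must become non-separating, hence be essential in $T_{t,\theta}$. This boundary component is precisely a circle of $S_{r(t,\theta)+\eee} \cap T_{t,\theta}$, so the region of $G_3$ containing $(t,r(t,\theta)+\eee,\theta)$ is of the required essential type.

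Finally, since $r$ is only continuous, I would perturb the graph of $r+\eee$ slightly to a smooth function $\tilde{r}$ whose graph is transverse to the Whitney-stratified reduced graphic $G_3$. The condition that $T_{t,\theta} \cap S_s$ contain an essential circle is open on $I \times I \times S^1 \setminus G_3$, so any sufficiently small such perturbation still has its graph intersecting only the desired regions. The main obstacle is the bookkeeping at the non-generic parameter points: one must use the generic classification of $3$-parameter singular types (sheets, double curves, triple points, dovetails) from Section~\ref{sub:theta} to verify the bound of at most three concurrent saddle sheets, which is precisely why Lemma~\ref{lemma:planar} is stated for up to three $1$-handles rather than just two.
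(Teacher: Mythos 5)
Your argument is essentially identical to the paper's, which also reduces the three-parameter case to Proposition~\ref{prop:H1graph} with $Z = I \times S^1$ and then observes that the only new phenomenon is a codimension-$3$ vertex of the reduced graphic, where passing from just below to just above the graph of $r$ crosses three saddle walls, so that Lemma~\ref{lemma:planar} (stated for up to three $1$-handles precisely for this reason) still applies. Your added remarks about perturbing $r+\eee$ to achieve transversality and about the openness of the essential-circle condition are sound and make explicit a step the paper leaves implicit.
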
 

\begin{proof} The only additional case to consider is that in which $(t, r(t, \theta), \theta)$ is a (codimension $3$) vertex in the graphic.  But in that case, the region just below the vertex can be connected to the region just above the vertex by passing through three saddle ``walls".  Thus Lemma \ref{lemma:planar} is still applicable, now with $n = 3$.
\end{proof}

\medskip

\end{document}